\newcommand{\te}{\theta}
\newcommand{\pd}{\partial}
\newcommand{\N}{\ensuremath{\mathbb{N}}}
\renewcommand{\S}{\ensuremath{\mathbb{S}}}
\newcommand{\Z}{\ensuremath{\mathbb{Z}}}
\newcommand{\R}{\ensuremath{\mathbb{R}}}
\newcommand{\C}{\ensuremath{\mathbb{C}}}
\newcommand{\T}{\ensuremath{\mathbb{T}}}
\newcommand{\pP}{\mathcal{P}}
\newcommand{\G}{\mathcal{G}}
\newcommand{\X}{\ensuremath{\mathcal{X}}}
\newcommand{\w}{\ensuremath{\omega}}
\newcommand{\et}{\eta}
\newcommand{\trans}{\mathrm{T}}
\newcommand{\al}{\alpha} 
\newcommand{\tr}{\mathrm{tr}}
\newcommand{\bb}[1]{\mathbf{#1}}
\newcommand{\lr}[1]{\langle{#1}\rangle}
\newcommand{\lrf}[1]{\lfloor{#1}\rfloor}
\newcommand{\nofty}[1]{\|{#1}\|_{\infty}}
\newcommand{\im}{\mathrm{i}}
\newcommand{\dx}{\mathrm{d}}
\newcommand{\e}{\mathrm{e}}
\newcommand{\ds}{\displaystyle}
\newcommand{\M}{\mathcal{M}}
\newcommand{\HH}{\mathcal{H}}
\newcommand{\ar}{\mathrm{arccos}}
\newcommand{\supp}{\mathrm{supp}}
\newcommand{\St}{\ensuremath{\mathbb{S}}^2}
\theoremstyle{plain}
\newtheorem{thm}{Theorem}[subsection]
\newtheorem{lem}{Lemma}[subsection]
\newtheorem{cor}{Corollary}[subsection]
\theoremstyle{definition}
\theoremstyle{remark}
\numberwithin{equation}{section}
\title{\Large \bf Super-Resolution on the Two-Dimensional Unit Sphere}
\author{ \small  Frank Filbir, Kristof  Schr\"oder, Anna Veselovska}
\date{\footnotesize{* \color{red}
This is an extended version of our  submitted manuscript for publication titled as ''Recovery of Atomic Measures on the Unit Sphere''. 
The present manuscript will not be submitted for publication, but it is rather meant to provide  details missing in the submitted version.  
} }
\begin{document}

\maketitle

\begin{abstract}
We study the problem of recovering an atomic measure on the unit 2-sphere $\S^2$ given finitely many moments with respect to spherical harmonics.
The analysis relies on the formulation of this problem as an optimization problem on the space of bounded Borel measures on $\S^2$ as it was considered by Y. de Castro \& F. Gamboa and E. Cand\'es \& C. Fernandez-Granda in 2013. We construct a dual certificate using a kernel given in an explicit form and make a concrete analysis of the interpolation problem. Numerical examples are provided and analyzed.
    
\end{abstract}

\newpage

\tableofcontents

\section{Introduction}

 Signals on a spherical manifold arise  in various applications, staring from medical imaging \cite{Arridge1999, Taguchi2001, Johansen2013},
  computer graphics \cite{Ramamoorthi2004, Sloan2008} and  sound recording \cite{Meyer2001, Meyer2003} to  astrophysics \cite{Jarosik2011} and  topography \cite{Audet2011}. 
  As mentioned in  \cite{McEwen2013}, in many of these settings inverse problems arise, where an unknown signal need to be recovered from linear measurements acquired through a convolution process. 
 In this work,  we consider the case, when a spatially highly resolved
signal is modelled as a weighted sum of Dirac measures $\mu^\star$ on the two-dimensional Euclidean $\St$, 
 and the information one can access is  only the convolved version of the signal $\mu^\star\ast D_N $ with the Dirichlet kernel $D_N$ on the sphere, for possibly low $N$. The problem of recovery of  a spatially highly resolved
signal from its coarse scale information is called the \textit{super-resolution problem} or, in other words, the \textit{ de-convolution  problem}. 
 
 In general, this sort  of problem
has been treated in different geometric settings and with respect to different
systems of functions. Exact measure recovery in the classical Fourier setting, i.e. when the underlying space is the torus $\T^d$, has a very long history starting with the initial work by G. R. de Prony in 1795 \cite{Prony} in the univariate case, and then moving on to different one-dimensional  \cite{Roy1989, Schmidt1989, JurgenFrank2011}  and  multi-dimensional Prony-based techniques  \cite{Potts2013, Kunis2016, Cuyt2018, Cuyt2020, Kunis2020} that have stabilized and generalized the method in various directions. Recently, the Prony's method has also been extend to the $d$-dimensional sphere $\mathbb{S}^d$.   

Less than ten years ago, considering the one-dimensional de-convolution problem in the light of convex optimization,  several authors \cite{DeCastro2012,Candes2014} have proposed a variational recovery approach  that is to minimize the \textit{total variation} over the set of all finite complex  measures supported on $\T$, given the convolved version of the measure. It has been
shown that if the support atoms of the measure   are well separated then the target  measure $\mu^\star$
is the unique solution of the minimization problem, therewith a sufficient criteria for $\mu^\star$ being the unique solution is the existence of so-called \textit{dual certificate}, that is, a polynomial of degree $N$ whose sup-norm is reached at the points of the measure support.

These two fundamental papers have ushered in new ways of treating the super-resolution problem.
Beside the fact that this variational recovery method does not need the number of unknowns points beforehand, it also has proved to enjoy stability in the case when the low frequency information of corrupted by noise \cite{Candes2013, Peyre2015a, Peyre2015b}. 
Hereupon, the generalization to higher dimensions on the torus has been considered in 
\cite{Fernandez2016, Peyre2019, Lasserre2019}. 
Another big advantage of considering the super-resolution as the minimization problem    is its adaptivity to different geometric settings, namely, semi-algebraic domains in higher dimensions \cite{DeCastro2016}, or compact smooth Riemannian manifolds such es the rotation group $SO(3)$ \cite{FrankKrisrof2016} and two-dimensional sphere \cite{Bendory2015a, Bendory2015b}.
Lately, this procedure has been generalized to
short-time Fourier measurements
\cite{Boelcskei2018}.

In this work, we consider  de-convolution  problem on the two-dimensional Euclidean $\St$ as a total variation minimization problem.
Although to prove uniqueness of an optimal solution, we follow a general idea from \cite{Candes2014, FrankKrisrof2016}.  
 the actual construction of a dual certificate requires localization
estimates for interpolation kernels and its derivatives on $\St$ with explicit constants, and heavily depends on special behaviour of the geodesic distance with respect to the boundedness of the derivatives and on the fact that there is no nowhere not vanishing vector field on the sphere, due to {\it the Hairy ball theorem}.
From numerical point of view, we consider two approaches. First, we use the dual formulation of the minimization problem solving it via a single semi-definite
program (SDP), going along the same line as  \cite{FrankKrisrof2016, Bendory2015b}.
In the second approach we discretize the primal problem beforehand, then solve the corresponding finite-dimensional optimization problem. 
To analyze the convergence of the discretization process we build on results stated in \cite{Tang2013}.
We also would like to mention the for the nonnegative total variation minimization problem   an alternative way of construction of a dual certificate that involves some algebraic techniques has been proposed in \cite{Kunis2020}, and 
in pure compressed sensing setting
a recovery of sparse signal on the two-dimensional sphere has been considered in \cite{Rauhut2011}

The outline of this paper is as follows. In Section 2,  briefly  the
necessary analytical tools on the sphere including spherical harmonics are introduced, and the
problem of super-resolution is stated.
 In Section 3 provides the  localization results for the chosen interpolating kernel that are essential for construction of a dual certificate. The actual construction of  a dual certificate as a solution of the Hermite-type interpolation problem is the content of Section 4. Finally, we finish by presenting the numerical solution and the discretization of the problem in Section 5.     




 
 
 




\section{Unit Sphere and  Super-Resolution }


In this section we briefly summarize  analytical tools on the two dimensional sphere  $\St$  and  state the super-resolution problem. 


\subsection{Analysis on the Sphere }

The unit sphere is an embedded sub-manifold of the three-dimensional Euclidean space $\R^3$  given by  
\begin{equation*}
    \St=\{x \in \R^3\colon \|x\|_2=1\},
\end{equation*}
where the $l_2$-norm is $\|x\|_2=\sqrt{x^\trans x}.$ Such 
embedding provides a very simple definition of the tangent space $ T_x\St$ at a point $x\in \St$, that namely it is given as the orthogonal complement of the linear subspace $\mathrm{span}\{x\}$, i.e.
\begin{equation*}
    T_x\St=\{y\in \R^3\colon \lr{x,y}=0\}, 
\end{equation*}
where $\lr{x,y}= x^\trans y$ is the standard inner product. 
Induced by the Riemannian metric of the ambient space $\R^3$, the Riemannian metric on the sphere $ g_{\S^2}: T_x\St \times T_x \St \to  \R $ is given for all $ x \in \St $  by 
\begin{equation*}
    g_{\S^2}(v, w): =\lr{v,w}, \quad v,w \in T_x \St. 
\end{equation*}
In this metric the geodesic distance between two points $ x,y \in \St$ is given by the great-circle distance
\begin{equation*}
    d(x,y)=\ar(\lr{x,y}), \quad x,y \in \St. 
\end{equation*}
Let us also shortly describe the differential structure on the sphere. We will use two different
local coordinates on $\St$. First, for  each starting point $x\in \St$ and a direction $v\in T_x\St$, there exists the unique geodesic $\gamma_{x,v}$ such that $\gamma_{x,v}(0)=x$ and $\gamma_{x,v}'(0)=v$, and the equation of such geodesic reads as
\begin{equation*}
    \gamma_{x,v}(t)=\cos{(\|v\|_2 t )} x+ \sin{(\|v\|_2t)}\frac{v}{\|v\|_2}.
\end{equation*}
Then  at a point $x\in \St$ the exponential map ${\exp_x{v}: \ T_x \St \to \St}$  is given by
\begin{equation}\label{exp_map_sphere}
    \exp_x{v}=\gamma_{x,v}(1)
\end{equation}

Now, let us fix an orthonormal basis $\eta_1^x, \eta_1^x \in T_x\St $ such that $\eta_2^x= \eta_1^x \times x$ and  $\eta_1^x= x \times \eta_2^x$. 
It is always possible to do so, although we can not choose a  local bases in a continuous way, as there is no continuous
nowhere not vanishing vector field on the sphere, due to {\it the Hairy ball theorem. }  This is a special property of $\St$ is in contrast to some other manifolds, e.g. the rotation group $SO(3)$, where the tangent space is basically a translation of the tangent space at the
identity. It was a classical problem to determine which of the spheres $\mathbb{S}^n=\{x \in \R^n\colon \|x\|_2=1\}$ are parallelizable, then it has been shown that along with $\mathbb{S}^0$ and the unit circle $\mathbb{S}^1$ parallelizable are only $\mathbb{S}^3$ and $\mathbb{S}^7$~\cite{Milnor}. 

One way to obtain  a local bases $\eta_1^x, \eta_1^x \in T_x\St $ is to choose a point $z\in \St$  and an orthonormal  basis $\eta_1^z, \eta_1^z \in T_z\St $ and then set 
\begin{equation}
    \eta_i^x= \left\{\begin{matrix} \e^{d(x,z)\cdot [\frac{z\times x}{\sin{(d(x,z))}}] \eta_i^z}, & x \ne -z,\\
    -\eta_i^z,& x= -z,  \end{matrix}\right.
\end{equation}
where for a vector $v\in\R^3 $
\begin{equation*}
    [v]= \begin{pmatrix}
     0 & -v_3& v_2\\
     v_3& 0&-v_1\\
     -v_2 & v_1& 0
    \end{pmatrix}
\end{equation*}
is the corresponding skew-symmetric matrix in the algebra $\mathfrak{so}(3)$. In other words, we rotate the local
coordinate system at $z$, which is continuous for all points but the antipodal point $-z$. 

Combination of  the coordinates of a vector $v\in T_x\St $ in the basis  $\eta_1^x, \eta_1^x \in T_x\St $ with the exponential map~\eqref{exp_map_sphere}  yields the normal coordinates
centered at $x\in\St$, i.e. we parametrize a neighborhood of $x$ by 
\begin{equation} 
    \varphi(v_1,v_2)= (\cos{\|v\|_2 t } )x+ \sin{(\|v\|_2t)}\frac{v_1\eta_1^x+ v_2\eta_2^x}{\|v\|_2}.
\end{equation}
and the inverse parametrization for $y \in \St$ in a neighborhood of $x\in\St$ is given by 
\begin{equation}
v_i(y)= \frac{d(y,x)}{\sin{(d(x,y))}} \lr{y,\eta_i^x}, \quad i= 1, 2. 
\end{equation}
Moreover, the vectors 
\begin{equation}
    \frac{\partial}{\partial v_1}\varphi(v_1(y),v_2(y)), \quad  \frac{\partial}{\partial v_2}\varphi(v_1(y),v_2(y))
\end{equation}
form a basis of  $T_y\St $. One can show that in the center of the normal coordinates, the derivatives of the basis vectors have the following properties   
    \begin{equation}
    \frac{\partial^2}{\partial^2 v_i}\varphi(v_1(x),v_2(x))= -x, \quad  \frac{\partial^2}{\partial v_j\partial v_j}\varphi(v_1(x),v_2(x))=0. 
\end{equation}
Since in a normal coordinate system centered at $x\in \St$, the Christoffel symbols vanish at the point $x \in \St$, the gradient of a differentiable function $f:\St \to \C$ at the point $x \in \St$ has the
representation
\begin{equation}\label{gradient_in_normal_coo}
    \nabla f (x)= \begin{pmatrix}
     X_1f(x)\\
     X_2f(x)
    \end{pmatrix},
\end{equation}
where the differential operators $X_i$, $i=1,2$,   are defined by
\begin{equation}\label{eq:1:0}
     X_if(x)=\frac{\partial}{\partial v_1} (f\circ \varphi) (v_1(x),v_2(x))= \lim_{t\to 0} t^{-1}(f(\gamma_{x,\eta_i^x}(t))-f(x)).
\end{equation}
The  Hessian  matrix of a twice differentiable function ${f:\St \to \C}$ in the center $x\in \St$ of the normal coordinates, has also a special
representation, namely 
\begin{equation}\label{hessian_in_normal_coo}
   Hf(x)= \begin{pmatrix}
     X_1 X_1f(x)& X_1 X_2f(x) \\
     X_2 X_1f(x) & X_2X_2f(x)
    \end{pmatrix}. 
\end{equation}

However, the representations \eqref{gradient_in_normal_coo} and  \eqref{hessian_in_normal_coo} are only true in the center of the normal coordinates, since as mentioned before the Christoffel symbols vanish. For different
points, we would have to compute the Christoffel symbols with respect to the normal coordinates, which becomes quite complicated.
Alternatively, we introduce a second set of coordinates, such that the computation of the Christoffel
symbols is much more simpler. For a point $z\in \St$, we  parametrize the set $B_\pi (0)\setminus \{0\}$ by 
\begin{equation}\label{vrtheta}
    v(r, \theta)= r(\cos(\theta)\eta_1^z+ \sin(\theta)\eta_2^z)
\end{equation}
for $(r,\theta)\in (0,\pi)\times[0,2\pi)$. Combining \eqref{vrtheta} with the exponential map~\eqref{exp_map_sphere}, i.e.
\begin{equation}\label{polar_coordinates}
    \varphi^{\mathrm{pol}}(r,\theta)= \exp_z(v(r, \theta)), 
\end{equation}
yields the {\it polar coordinates} centered at $z\in \St$, which  parametrize ${\St\setminus \{z,-z\}}$. To give an example, for $z= (0,0,1)^{\mathrm{T}}$, these are the usual spherical coordinates on the sphere, given by
\begin{equation*}
  \varphi^{\mathrm{pol}}(r,\theta)=  \begin{pmatrix}
     \sin{r}\cos{\theta} \\
     \sin{r} \sin{\theta} \\
     \cos{r}
    \end{pmatrix}. 
\end{equation*}
Following the line, for a vector $x\in \St\setminus \{z,-z\}$, the inverse parametrization is given by
\begin{equation*}
\begin{split}
     r(x) &=\ar(\lr{x,z})= d(x,z),\\
   \theta(x) &=\mathrm{arctan}_2(\lr{x,\eta_2^z}, \lr{x,\eta_1^z}),
\end{split}
\end{equation*}
where $\mathrm{arctan}_2(x,y)$ denotes the $\mathrm{arctan}$ of $\frac{y}{x}$ with respect to the different branches of the tangent function,
which means
\begin{equation*}
    \cos{(\theta(x))} =\frac{\lr{x,\eta_1^z}}{\sin{(d(x,z))}},\quad \quad \sin{(\theta(x))} =\frac{\lr{x,\eta_2^z}}{\sin{(d(x,z))}}.
\end{equation*}
For each vector $x\in \St\setminus \{z,-z\}$, the vectors 
\begin{equation}\label{eq:1:1}
    \gamma_1^x= \frac{\partial \varphi^{\mathrm{pol}}}{\partial r}(r(x),\theta(x)), \quad \gamma_2^x=\frac{1}{\sin{(r(x))}} \frac{\partial \varphi^{\mathrm{pol}}}{\partial \theta}(r(x),\theta(x)), 
\end{equation}
form an orthonormal basis of the tangent space $T_x\St $. Notably, we have $\gamma_2^x=\gamma_1^x\times x $ and $\gamma_1^x=x\times \gamma_2^x $.  We remark, due to the singularities at the poles $z,-z$ there is no basis in spherical coordinates of the corresponding tangent spaces.

In the polar coordinates \eqref{polar_coordinates}, the Riemannian metric takes the form
\begin{equation}
   g(r,\theta)= \begin{pmatrix}
     1&0 \\
     0& \sin^2{(r)}
    \end{pmatrix}, 
\end{equation}
and the Christoffel symbols in these coordinates are therefore given by
\begin{equation}
   \Gamma^r(r,\theta)= \begin{pmatrix}
     0&0 \\
     0& -\sin{r}\cos{r}
    \end{pmatrix}, 
    \quad 
    \Gamma^\theta(r,\theta)= \begin{pmatrix}
     0&\cot{r} \\
     \cot{r} & 0
    \end{pmatrix}.
\end{equation}

For a twice differentiable function ${f:\St \to \C}$ and a vector ${x\in \St\setminus\{z,-z\}}$, 
the Hessian matrix with respect to the polar
coordinates centered at $z$, i.e. with respect to the basis \eqref{eq:1:1}, is represented by
\begin{equation}\label{hessian_polar}
  Hf=\begin{pmatrix}
     \frac{\partial^2 f\circ\varphi^{\mathrm{pol}}}{\partial r^2}&
     \frac{1}{\sin{r}} \frac{\partial^2 f\circ\varphi^{\mathrm{pol}}}{\partial r \partial \theta} \\
     \frac{1}{\sin{r}} \frac{\partial^2 f\circ\varphi^{\mathrm{pol}}}{ \partial \theta \partial r}& \frac{1}{\sin^2{r}} \frac{\partial^2 f\circ\varphi^{\mathrm{pol}}}{ \partial \theta^2}
    \end{pmatrix}- \frac{1}{\sin^2{r}} \frac{\partial f\circ\varphi^{\mathrm{pol}}}{\partial r}\Gamma^r- \frac{1}{\sin{r}}\frac{\partial f\circ\varphi^{\mathrm{pol}}}{\partial \theta}\Gamma^\theta.
\end{equation}

\subsection{Spherical harmonics}

Now,  let us describe the involved basis functions, known as spherical harmonics. For a detailed overview  see \cite{AtkHan}.

Let consider the space $L^2(\St)$ of all functions ${f:\St \to \C}$  such that 
\begin{equation*}
    \|f\|_2= \left(\int_{\St} |f(x)|^2\dx \Omega(x)\right)^{\frac{1}{2}}=\left(\int_{0}^{2\pi}\int_{0}^{\pi} |f(r,\theta)|^2 sin{(r)}\,\dx r\dx \theta \right)^{\frac{1}{2}} <\infty, 
\end{equation*}
where $\Omega$ is the Riemannian volume form on $\St$ for the metric $g$, and the second part of the equality represents $\|f\|_2$  in the spherical coordinates. 
It is well known, that the space  $L^2(\St)$  can be decomposed into an orthogonal sum
\begin{equation*}
L^2(\St)= \mathrm{cl}_{\|\cdot\|_{L_2}}\bigoplus\limits_{l=0}^\infty H_l
\end{equation*}
where $H_l$ is the eigenspace to the eigenvalue $\lambda_l=-l(l+1)$ of the Laplace-Beltrami operator on $\St$ with $\mathrm{dim}(H_l)=2l+1$. 
Looking for an  orthonormal  system of eigenfunctions of the Laplace-Beltrami operator on the unit sphere leads to the spherical harmonics. Namely,  the \textit{spherical harmonics} $Y_m^{l}: \St \to \C $ of degree $m\in \N$ and order $l$ are functions
\begin{equation*}
    Y_m^{l}(x(r, \theta))= N_{\ell m} P_{\ell}^m(\cos{(r)})\e^{i m\theta}, 
\end{equation*}
where  $ -l\le m \le l$,  the normalization constant $N_{\ell m}$ is given by 
\begin{equation*}
    N_{\ell m}= \frac{1}{\sqrt{2\pi}}\sqrt{\frac{2l+1}{2}\frac{(l-m)!}{(l+ m)!}},
\end{equation*}
 $P_{l}^m  (t)$ are {\it associated Legendre polynomials}, $r\in[0, \pi ]$, $\te \in[0, 2\pi )$ are  the inclination  and  azimuth respectively. 
With this,  we have that the system
\begin{equation*}
    \{Y_m^{l}\colon l\in \N, \; -l \le m \le l\}
\end{equation*}
constitutes  an orthonormal basis of  $L^2(\St)$.
Moreover, the following addition theorem is valid
 \begin{equation}\label{addition_theorem}
   P_{l}(\lr{x,y}):= P_{l}^0(\lr{x,y}) =\frac{4\pi}{ 2l+ 1} \sum\limits_{m=-l}^{l}Y_{m}^{l}(x)\overline{Y_{m}^{l}(y)}.
\end{equation}
The  space of all finite linear combinations
of spherical harmonics with degree less or equal to $N$ will be denoted as
\begin{equation*}
    \Pi_N(\St):= \mathrm{span}\{Y_m^{l}, -l\le m \le l, l\le N\}
\end{equation*}
and will be called generalized polynomials of degree $N$.
The projection operator onto the set of generalized polynomials $\Pi_N(\St)$, is given by 
\begin{equation}\label{projection_onto_Pi_N}
  \pP_N\colon L^2(\St)\to C(\St) \quad \text{with} \quad   \pP_N f(x)= \int_{\St} f(y) D_N(x,y)\dx \Omega(y),
\end{equation}
where the Dirichlet kernel on the sphere reads as 
\begin{equation*}
    D_N(x,y)= \sum\limits_{l=0}^N\frac{2l+ 1}{4\pi} P_{l}(\lr{x,y}). 
\end{equation*}

\subsection{Super-Resolution on the Unit Sphere}\label{SR_section}

In the following, we will introduce the problem of super-resolution, or in another words, so called deconvolution problem, on the unit sphere sphere $\St$, i.e. exact recovery of Dirac measure from its moments with respect to the spherical harmonics up to a degree $N$. 

To this aim, let us consider a weighted superposition of spikes 
\begin{equation*}
    \mu^\star= \sum\limits_{i=1}^M c_i\delta_{x_i}
\end{equation*}
where $ M\in \N $, $\delta_{x_i}$ is a Dirac measure centered at pairwise distinct  $x_i\in  \St$, $c_i \in \R $ are real valued amplitudes. 
We assume that all parameters $M, c_i, x_i$ are unknown and we  can only access
\begin{equation}\label{eq:1:2}
    \pP_N^*\mu^\star(x)= \int_{\St} D_N(x,y)\dx \mu^\star(y)
\end{equation}
for possibly low degree $N$. 
The super-resolution problem is to
recover the unknown locations $\X=\supp{(\mu^\star)}=\{x_i\}_{i=1}^M\subset \St$
and the coefficients  $c_i$ from the low frequency information~\eqref{eq:1:2}. 
Due to consideration of the total variation norm as being the continuous analog of the $\ell_1$ norm, in the light of convex optimization, see \cite{ DeCastro2012, Candes2013, Candes2014, Bendory2015a, FrankKrisrof2016},  the super-resolution problem can be formulated as the following minimization problem 
\begin{equation*}\label{eq:1:3}
 \tag{RP}
    \min\limits_{\mu \in \M (\St, \R)} \|\mu\|_{\mathrm{TV}},\quad\mbox{ subject to }\quad \pP_N^*\mu=\pP_N^*\mu^\star,
\end{equation*}
where the minimization is carried out over the set of all  finite measures $\mu$ supported on $\St$, and  the \textit{total variation} for a signed Borel measure is defined  by
\begin{equation*}
    \|\mu\|_{\mathrm{TV}}= |\mu|(\St)= \sup\sum_j|\mu(B_j)|,
\end{equation*}
and the supremum is taken over all partitions $B_j$ of $\St$. 

The main ingredient to ensure the fact $\mu^\star$ is the unique minimizer of the convex program \eqref{eq:1:3} is  the existence of a dual interpolating polynomial ${q\in \Pi_N(\St)}$, or so-called \textit{dual certificate}, that for each sign sequences ${u_i\in \{-1,1\}}$  satisfies 
\begin{equation}\label{eq:1:4}
\begin{split}
    q(x_i)&= u_i,  \quad x_i \in \X\\
|q(x)|&<1, \quad x \in \St \setminus\X. 
\end{split}
\end{equation}
The connection between the  uniqueness of optimal solution and the existence of a dual certificate
has been exploited in different settings, see e.g. \cite{DeCastro2012, Bredies2013, Candes2014, Bendory2015a, FrankKrisrof2016}.
In order to fulfill the interpolating conditions \eqref{eq:1:4} let us consider the following  \textit{Hemite-type} interpolation problem 
\begin{equation}\label{interpolation_problem}
\begin{split}
q(x_i)&= u_i,  \\
X_1 q(x_i)&= X_2q(x_i)=0
\end{split}
\end{equation}
for $x_i\in \X $, where $X_k$ are the differential operators defined in \eqref{eq:1:0}. This means, we ask not only the interpolation, but also for  local extrema at all the interpolation points. In order to approach the interpolation problem~\eqref{interpolation_problem},  we consider a kernel $J_N\colon \St\times\St \to \C$, such that the kernel $J_N(\cdot, y)$ itself,  and its derivatives $X_k^y J_N(\cdot, y)$, where the superscript indicates the action of the differential operators on the second variable, are generalized polynomials of order $N$ in the first variable, i.e. $J_N(\cdot, y)$, $X_k^y J_N(\cdot, y) \in \Pi_N(\St)$ for all vectors $y\in \St$. The dual certificate we construct is of the form 
\begin{equation}\label{DC_s1}
    q(x)= \sum\limits_{i=0}^{M}\alpha_{0,i}J_N(x, x_i)+ \alpha_{1,i}X_1^y J_N(x, x_i)+\alpha_{2,i}X_2^y J_N(x, x_i).
\end{equation} 
Thus, is easily follows that due to the construction $q\in \Pi_N(\St)$.
Applying the interpolation conditions \eqref{interpolation_problem} leads to the linear system of equations 
\begin{equation}\label{eq:1:7}
    K\alpha=   \begin{pmatrix}
J_N& X_1^x J_N & X_2^x J_N\\
X_1^y J_N & X_1^x X_1^y J_N&  X_2^x X_1^y J_N\\
X_2^y J_N & X_1^x X_2^y J_N&  X_2^x X_2^y J_N
   \end{pmatrix}
   \begin{pmatrix}
\alpha_0\\
\alpha_1 \\
\alpha_2 
   \end{pmatrix}=
   \begin{pmatrix}
u\\
0 \\
0
   \end{pmatrix}.
\end{equation}
where the block $J_N$ in the matrix $K$ correspond to matrix of the form 
$J_N=(J_N(x_i,x_j))_{i,j=1}^M$, and the same we have for the derivatives. The entries in the vectors are given by $\alpha_k=(\alpha_{k,j})_{j=1}^M$, $k=0,1,2$, and $u=(u_j)_{j=1}^M$. To find the vector of  coefficients $\al$, we need to show that the matrix $K$ is invertible. To this aim, we follow the general idea provided \cite{Candes2014, FrankKrisrof2016}. Namely,  due to the block structure of $K$ one can proof the invertibility of $K$ using an {\it iterative block inversion} and the fact that a matrix $A$ is invertible if $
    \|I-A\|_\infty<1,
$
where $\|A\|_\infty=\max_i \sum_j |a_{i,j}|$. In this case the norm of the  inverse matrix is bounded by
\begin{equation*}
    \|A^{-1}\|_\infty<\frac{1}{1-\|I-A\|_\infty}. 
\end{equation*}
Thus, to show the invertibility of $K$, we need to make use of  {\it  localization properties } %
of the entries of $K$, i.e. we need to bound the expressions $| J_N(x_i,x_j)|$
$|X_k^y J_N(x_i,x_j)|$ and  $|X_n^xX_k^yJ_N(x_i,x_j)|$. 
Namely,  we are looking for the estimation for the kernel $J_N$ of the form 
\begin{equation*}
    |J_N(x_i,x_j)|< \frac{c}{((N+1) d(x_i,x_j))^s},
\end{equation*}
with some constants $s$ and $c$, and for equivalent bounds for the derivatives. Using these estimates we can find explicit bounds on the supremum norm of  the coefficients $\alpha_k$. Once  the coefficients are bounded, we need to show that $|q(x)|<1$ for $x \in \St\setminus \X$. To this aim, we need to show convexity property of $q$, that can be done by estimating the entries of the \textit{Hessian matrix} of $q$. Since by construction \eqref{DC_s1} $q$  alredy includes first derivatives of $J_N$,   it will put into consideration third mixed derivatives of $J_N$ in the Hessian. Therefore, we also need some estimates of the derivatives of third order.  All this together results in  the topic of the next sections, where we choose a specific kernel and show needed  locality estimations. 

\section{Localized kernels}\label{loc_kernels_sec}

The aim of this section is to show localization results for the constituents of dual certificate~\eqref{DC_s1} for a particularly chosen interpolation kernel $J_N$. 

Let us star discussing the choice of the interpolation kernel. First, the interpolation kernel need to have an expansion in terms of generalized polynomials. Therefore we choose a kernel that owns the representation
\begin{equation*}
    J_N(x,y)= \sum_{l=0}^N \widetilde{w}_{l}\sum\limits_{m=-l}^{l}Y_m^{l}(x) \overline{Y_m^{l}(y)},
\end{equation*}
hence, due to the constriction  we have $J_N(\cdot,y),$ $ X_n^yJ_N(\cdot,y) \in {\Pi}_N(\St)$ for all $y\in \St$. Moreover, using the addition theorem~\eqref{addition_theorem} for spherical harmonics leads to 
\begin{equation*}
     J_N(x,y)= \widetilde{J}_N(d(x,y))= \sum_{l=0}^N\frac{2 l+1}{4\pi}\widetilde{w}_{l}P_{l}(\lr{x,y})
\end{equation*}
where $\widetilde{J}_N$ is a trigonometric polynomial. This means that $J_N$ is a zonal function, i.e. its value only depends on the
distance between $x$ and $y$. Such a property of $J_N$ lends us a hand in deriving estimates of the interpolation kernel $J_N$, since on condition localization estimates can derived from localization principles for the trigonometric polynomial $\widetilde{J}_N$. 



Relying on the discussed above, as an interpolation kernel we choose the specific kernel, given by 
\begin{equation}\label{Jackson}
    J_N(x,y)= \widetilde{J}_N(d(x,y))= \frac{1}{(\lrf{N/2}+1)^4}\frac{\sin^4\big((\lrf{N/2}+1) d(x,y)/2\big)}{\sin^4(d(x,y)/2)},
\end{equation}
i.e. the classical Jackson kernel evaluated at the distance between $x,y\in \St$.

\begin{lem}
The Jackson kernel $J_N(x,y)$ has an expansion of the form 
\begin{equation}\label{Jackson_as_sum}
    J_N(x,y)=\sum_{l=0}^N \frac{2l+1}{4\pi}\widetilde{w}_{l}P_{l}(\lr{x,y})
\end{equation}
with positive Legendre coefficients 
$  \ds {\widetilde{w}_{l}= 2\pi \int_{-1}^1 P_{l}(t)\widetilde{J}_N(\ar(t))\,\dx t,} $
and thus $J_N(\cdot,y), X_n^yJ_N(\cdot,y) \in \Pi_N(\St)$ for all $y\in \St$.
\end{lem}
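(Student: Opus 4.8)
The plan is to transfer the statement to the single real variable $t\in[-1,1]$ and to the trigonometric polynomial $\widetilde J_N$, and then to invoke (or re-derive) the classical positive-definiteness of Dirichlet- and Fej\'er-type zonal kernels on $\St$. Write $\nu:=\lrf{N/2}+1$. First I would note that $\frac{\sin(\nu\theta/2)}{\sin(\theta/2)}=\sum_{k=0}^{\nu-1}\e^{\im(\nu-1-2k)\theta/2}$, hence $\bigl(\tfrac{\sin(\nu\theta/2)}{\sin(\theta/2)}\bigr)^{2}=\sum_{|k|<\nu}(\nu-|k|)\e^{\im k\theta}$, so that $\widetilde J_N$ in \eqref{Jackson} is an even, nonnegative trigonometric polynomial of degree $2(\nu-1)=2\lrf{N/2}\le N$. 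Substituting $t=\cos\theta$ and using $\cos(k\theta)=T_k(t)$ (Chebyshev polynomials of the first kind) turns $\widetilde J_N(\ar(t))$ into an algebraic polynomial in $t$ of degree at most $N$, hence an element of $\mathrm{span}\{P_0,\dots,P_N\}$. Writing $\widetilde J_N(\ar(t))=\sum_{l=0}^{N}\frac{2l+1}{4\pi}\widetilde w_l\,P_l(t)$ and testing against $P_{l'}$, with $\int_{-1}^{1}P_lP_{l'}\dx t=\frac{2}{2l+1}\delta_{l,l'}$, forces $\widetilde w_l=2\pi\int_{-1}^{1}P_l(t)\widetilde J_N(\ar(t))\dx t$; evaluating at $t=\lr{x,y}$ and using the addition theorem \eqref{addition_theorem} gives \eqref{Jackson_as_sum}, exhibiting $J_N(x,y)$ as a finite combination of the $Y_m^{l}(x)\overline{Y_m^{l}(y)}$ with $l\le N$. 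The claimed memberships follow at once: for fixed $y$, $J_N(\cdot,y)$ is a linear combination of $Y_m^{l}$ with $l\le N$, so $J_N(\cdot,y)\in\Pi_N(\St)$; and since $X_n^y$ differentiates only the second argument, $X_n^yJ_N(\cdot,y)$ is again such a combination, hence lies in $\Pi_N(\St)$.

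For the positivity of the coefficients I would factor $\widetilde J_N(\ar(t))=\tfrac{1}{\nu^4}Q_\nu(t)^2$, where $Q_\nu(t):=\frac{1-T_\nu(t)}{1-t}$ (using $1-\cos(\nu\theta)=2\sin^2(\nu\theta/2)$ and $1-\cos\theta=2\sin^2(\theta/2)$); thus $Q_\nu$ is a polynomial of degree $\nu-1$ with $Q_\nu(\cos\theta)=\bigl(\tfrac{\sin(\nu\theta/2)}{\sin(\theta/2)}\bigr)^{2}\ge 0$. The elementary counting identity $\sum_{|k|<\nu}(\nu-|k|)\e^{\im k\theta}=\sum_{j=0}^{\nu-1}D_j(\theta)$, with $D_j(\theta)=\sum_{|k|\le j}\e^{\im k\theta}$ the Dirichlet kernel, gives $Q_\nu=\sum_{j=0}^{\nu-1}D_j$ as polynomials in $t$, where $D_j(t)=1+2\sum_{k=1}^{j}T_k(t)$. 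The crucial point is that each $D_j$, seen as the zonal kernel $D_j(\lr{x,y})$ on $\St$, has strictly positive Legendre coefficients for $0\le l\le j$ (and vanishing ones for $l>j$). Granting this, $Q_\nu$ has positive Legendre coefficients up to degree $\nu-1$, and then — since the Legendre linearization coefficients $\frac{2c+1}{2}\int_{-1}^{1}P_aP_bP_c\dx t$ are all nonnegative (classical; the linearization $P_aP_b=\sum_c g(a,b,c)P_c$ has nonnegative coefficients) — the square $Q_\nu^2$, hence $\widetilde J_N(\ar(\cdot))$, has nonnegative Legendre coefficients, with $\widetilde w_l>0$ for $0\le l\le 2\lrf{N/2}$ (for such $l$ pick $a,b\le\nu-1$ with $|a-b|\le l\le a+b$ and $a+b\equiv l\pmod 2$, making that linearization coefficient strictly positive) and $\widetilde w_l=0$ for $l>2\lrf{N/2}$.

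The step I expect to be the real obstacle is the positivity of the Legendre coefficients of $D_j$ on $\St$ (equivalently, positive-definiteness of $D_j(\lr{\cdot,\cdot})$). I would settle it by a short computation: since $D_j(\cos\theta)=\frac{\sin((j+\tfrac12)\theta)}{\sin(\theta/2)}$ and $\sin\theta=2\sin(\theta/2)\cos(\theta/2)$, the $l$-th Legendre coefficient of $D_j$ equals $\frac{2l+1}{2}\int_0^{\pi}D_j(\cos\theta)P_l(\cos\theta)\sin\theta\dx\theta=\frac{2l+1}{2}\int_0^{\pi}\bigl[\sin((j+1)\theta)+\sin(j\theta)\bigr]P_l(\cos\theta)\dx\theta$; inserting the classical \emph{nonnegative} cosine expansion $P_l(\cos\theta)=4^{-l}\sum_{k=0}^{l}\binom{2k}{k}\binom{2l-2k}{l-k}\cos((l-2k)\theta)$ together with $\int_0^{\pi}\sin(p\theta)\cos(q\theta)\dx\theta=\frac{(1-(-1)^{p+q})\,p}{p^2-q^2}$ (terms with $p=|q|$ vanishing) reduces everything to the observation that this last quantity is $0$ when $p+q$ is even and strictly positive when $p+q$ is odd and $p>|q|$. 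Since here $|q|\le l\le j<j+1$ and, for each summand, exactly one of $p=j$, $p=j+1$ gives $p+q$ odd and for that value $p>|q|$, every summand contributes a strictly positive amount, so the integral is positive. (If preferred, this positivity may instead be quoted from the classical literature on the nonnegativity of Gegenbauer coefficients of Jackson kernels.) The remaining ingredients — the degree bound and the orthogonal extraction of $\widetilde w_l$ — are routine.
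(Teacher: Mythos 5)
Your proof is correct, and at the top level it follows the same strategy as the paper's: write $\widetilde J_N$ as the square of the Fej\'er kernel, combine positivity of the Fej\'er kernel's Legendre coefficients with the nonnegativity of the Legendre linearization coefficients (Adams's formula, which the paper also invokes), and read off membership in $\Pi_N(\St)$ from the addition theorem. The genuine difference is in how the key ingredient is handled. The paper simply cites Keiner--Kunis for the Legendre expansion of the Fej\'er kernel, leaving the positivity of those coefficients implicit in the citation; you instead decompose $Q_\nu=\sum_{j=0}^{\nu-1}D_j$ into Dirichlet kernels and prove the strict positivity of each $D_j$'s Legendre coefficients by a direct computation, using $D_j(\cos\theta)\sin\theta=\sin((j+1)\theta)+\sin(j\theta)$, the nonnegative cosine expansion of $P_l(\cos\theta)$, and the elementary integral $\int_0^{\pi}\sin(p\theta)\cos(q\theta)\,\dx\theta=\frac{(1-(-1)^{p+q})p}{p^2-q^2}$; your parity and size analysis ($p>|q|$ in the odd case) is right. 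This buys a self-contained argument and, as a by-product, the exact degree and positivity range: $\widetilde w_l>0$ precisely for $0\le l\le 2\lrf{N/2}$ and $\widetilde w_l=0$ above, which for odd $N$ is a sharper (indeed corrected) version of the lemma's literal claim that all coefficients up to $l=N$ are positive. Nothing downstream relies on $\widetilde w_N>0$ for odd $N$, so the discrepancy is harmless, but your bookkeeping is the accurate one.
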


\begin{proof}
For $t\in [-1,1]$, let us consider the function $\widetilde{F}_n$  defined as 
\begin{equation*}
    \widetilde{F}_n(\ar(t))=\frac{1}{n+1}\frac{\sin^2((n+1)^2\ar(t)/2)}{\sin^2(\ar(t)/2)}
\end{equation*}
i.e. Fej\'{e}r kernel evaluated at $\ar{(t)}$. Then for the trigonometric polynomial  $\widetilde{J}_N$ it holds 
\begin{equation*}
    \widetilde{J}_N(\ar{(t)})= \widetilde{F}_n^2(\ar(t)), \quad n=\lrf{N/2}
\end{equation*}
As it was shown in \cite{KeinerKunis2007},  the Fej\'{e}r kernel can be represented in terms of Legendre polynomials as 
\begin{equation}
    \widetilde{F}_n(\ar(t))= \sum_{l=0}^M\frac{2 l+1}{4\pi}\widetilde{v}_{l}P_l(t),
\end{equation}
which shows that $\widetilde{F}_n(d(x,\cdot)), \widetilde{F}_n(d(\cdot,y)) \in \Pi_n(\St)$,  and therefore $\widetilde{J}_N(d(x,\cdot))$, $\widetilde{J}_N(d(\cdot,y)) \in \Pi_N(\St)$. The positivity of $\widetilde{w}_{l}$ follows from the positivity of the linearization coefficients of a product of two Legendre polynomials 
\begin{equation*}
    P_n(t)P_m(t)= \sum_{l=0}^{\min (m,n)}\frac{2m+2n-4l+1}{2m+2n-2l+1}\frac{A(m-l)A(l)A(n-l)}{A(n+m-l)}P_{m+n-2}(t),
\end{equation*}
where $ A(m)=\frac{1\cdot3\cdot5 \cdot \, \cdots \, \cdot (2m-1)}{m!},$ for details see \cite{Adams}. 
\end{proof}

Before we proceed with stating the necessary localization estimates, we would like to discuss the behaviour of the  Jackson kernel. The geodesic distance of the sphere  behaves in a special way with respect to the boundedness of the derivatives. Namely the derivatives of the geodesic distance $d(x,y)$ have true poles at $x=y$ and $x=-y$. Whereas the first case can be handled using the point-wise estimates, the second case can not be covered in the same way, since the sign  of $\cos(kd(x,y))$ and $\sin(kd(x,y))$ alternates with $k$ in a neighborhood of $x=-y$. Consequently, we need to deal with the singularity at $x=-y$ induced by the derivatives of the geodesic distance in different way. Especially,  we need to bound the following trigonometric expressions
\begin{equation}\label{Gs}
\begin{split}
     G_1(\w)&= \frac{\widetilde{J}'_N(\w)}{\sin \w},    \quad  \quad  \quad \quad  \quad G_2(\w)= \widetilde{J}''_N(\w) \frac{\widetilde{J}'_N(\w)\cos \w}{\sin \w},  \\
    G_3(\w)&=\frac{\widetilde{J}''_N(\w)\sin \w- \widetilde{J}'_N(\w)\cos \w}{\sin^2\w}, 
    \end{split}
\end{equation}
that appear in the spherical derivatives. 
As we see, knowing the asymptotic estimates for $\widetilde{J}'_N$ and $\widetilde{J}''_N$ is not sufficient, so we have to consider the difference in the closed form, and  to achieve this, we use the closed form expression  \eqref{Jackson} of the Jackson kernel.

\begin{lem}\label{lemma_1}
Let $\widetilde{J}_N$ be a trigonometric polynomial defined in \eqref{Jackson}, then for some $\w\ne0$ and for  $n=\lrf{N/2}$, the following estimations holds true 
\begin{equation*}
    \begin{split}
 |\widetilde{J}_N(\w)|&\le \frac{\pi^4}{(n+1)^4|\w|^4} \quad |\widetilde{J}'_N(\w)|\le \frac{3\cdot \pi^4}{(n+1)^3|\w|^4} \quad  |\widetilde{J}''_N(\w)|\le \frac{12.5\cdot \pi^4}{(n+1)^2|\w|^4}
\\ |G_1(\w)|&\le \frac{2\cdot\pi^4}{(n+1)^2|\w|^4} \quad |G_2(\w)|\le  \frac{14.5\cdot\pi^4}{(n+1)^2|\w|^4}\quad |G_3(\w)|\le\frac{8\cdot\pi^4}{(n+1)|\w|^4}\\
    |\widetilde{J}'''_N(\w)|&\le \frac{50.5\cdot\pi^4}{(n+1)^4|\w|^4},       
    \end{split}
\end{equation*}
where the functions  $G_1,G_2,G_3$ are defined in \eqref{Gs}. Furthermore,  for $\ds|\w|\le\frac{\pi}{4(n+1)}$ we have
\begin{equation*}
    \begin{split}
|\widetilde{J}''_N(\w)-\cos(\w) G_1(\w)|&\le
\frac{\widetilde{J}^{(4)}_N(0)}{2}|\w|^2,  \; |\widetilde{J}''_N(0)- \widetilde{J}''_N(\w)|\le \frac{\widetilde{J}^{(4)}_N(0)}{2}|\w|^2
\\
|G_2(\w)|&\le \frac{\widetilde{J}^{(4)}_N(0)}{2}|\w|^2, \; |G_3(\w)|\le 0.52 \cdot \widetilde{J}^{(4)}_N(0) |\w|, \\
\left| \frac{\widetilde{J}'_N(\w)-\cos(\w)\sin(\w)\widetilde{J}''_N(\w)}{\sin^2(\w)} \right|&\le 0.52\cdot \widetilde{J}^{(4)}_N(0) |\w|, \quad |\widetilde{J}'''_N(0) |\le \widetilde{J}^{(4)}_N(0)|\w|,
 \end{split}
\end{equation*}
and  for $\w=0$ we have 
\begin{equation*}
    \begin{split}
   \widetilde{J}_N(0)&=1, \quad \quad  \widetilde{J}'_N(0)=\widetilde{J}'''_N(0)=0, \quad \quad  \widetilde{J}''_N(0)=-\frac{n(n+2)}{3}, \\
\widetilde{J}^{(4)}_N(0)&= \frac{1}{30}n(n+1)(9n(n+2)-2).
 \end{split}
\end{equation*}
\end{lem}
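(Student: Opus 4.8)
The strategy is to work directly with the closed form \eqref{Jackson}. Writing $a=(n+1)/2$ and $h(\w)=\dfrac{\sin(a\w)}{2a\,\sin(\w/2)}$, we have $\widetilde J_N=h^{4}$; since $h^{2}$ is the Fej\'er kernel normalized so that its maximum equals $1$, $h$ is even, real-analytic near $0$, $h(0)=1$, and $0\le h^{2}\le 1$, while $\widetilde J_N$ is a trigonometric polynomial of degree $\le 2n\le N$, as shown in the previous lemma. Every quantity in the statement is a polynomial in $h$ and its derivatives — for the $G_{i}$ divided in addition by $\sin\w$ or $\sin^{2}\w$ — so the proof splits into (i) pointwise bounds for $h,h',h'',h'''$ on $(0,\pi]$ and (ii) the local Taylor expansion at $\w=0$.

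For (i) I would differentiate the quotient $\sin(a\w)/\sin(\w/2)$ and estimate using the elementary inequalities $|\sin|\le 1$, $|\cos|\le 1$, $\sin(\w/2)\ge |\w|/\pi$ (Jordan) on $[0,\pi]$, $|\sin(a\w)|\le a|\w|$, and identities such as $\sin^{3}\theta\cos\theta\le\tfrac12$. Each differentiation multiplies the order in $n$ by $a\asymp n+1$; using only $|\sin|\le1$ it also costs one spurious power of $|\w|$, which is recovered for $|\w|\lesssim(n+1)^{-1}$ via $|\sin(a\w)|\le a|\w|$, so after a case split at $|\w|\asymp(n+1)^{-1}$ one obtains bounds of the form $c\,\pi^{4}(n+1)^{-s}|\w|^{-4}$ with $s=4,3,2$ for $\widetilde J_N,\widetilde J_N',\widetilde J_N''$ and $s=1$ for $\widetilde J_N'''$. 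The estimates for $G_{1},G_{2},G_{3}$ then follow from those for $\widetilde J_N',\widetilde J_N''$ once one notes that $\widetilde J_N$ is symmetric about both $\w=0$ and $\w=\pi$ — with $t=\w-\pi$, both $\sin^{4}((n+1)(\pi+t)/2)$ and $\sin^{4}((\pi+t)/2)$ are even in $t$ — so that $\widetilde J_N'$ is odd about each of these points and $\widetilde J_N'/\sin\w$ and $(\widetilde J_N''\sin\w-\widetilde J_N'\cos\w)/\sin^{2}\w$ are analytic across $\w=0,\pi$: the apparent poles are removable, and one may cancel the $\sin(\w/2)$ that is always present in the closed-form numerators against $\sin\w=2\sin(\w/2)\cos(\w/2)$ before applying the same inequalities. (The $x=-y$ obstruction mentioned before the lemma does not enter here; as scalar functions of $\w$ the $G_{i}$ are harmless, and it is only their later composition with derivatives of the geodesic distance that is delicate.) The numerical constants $3,12.5,50.5,2,14.5,8$ are the output of optimizing these elementary estimates.

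For (ii) I would insert the series $\sin t=t-t^{3}/6+t^{5}/120-\cdots$ into $h$ and raise to the fourth power; collecting the constant, quadratic and quartic terms gives $\widetilde J_N(\w)=1-\tfrac{n(n+2)}{6}\w^{2}+\tfrac{1}{24}\widetilde J_N^{(4)}(0)\,\w^{4}-\cdots$ with $\widetilde J_N^{(4)}(0)=\tfrac1{30}n(n+1)(9n(n+2)-2)$, and evenness forces $\widetilde J_N'(0)=\widetilde J_N'''(0)=0$, which settles the last block. For the estimates on $|\w|\le\tfrac{\pi}{4(n+1)}$ I would write each left-hand side as a remainder vanishing at the origin: e.g.\ $\widetilde J_N''(0)-\widetilde J_N''(\w)=-\int_{0}^{\w}\!\!\int_{0}^{s}\widetilde J_N^{(4)}(t)\,\dx t\,\dx s$, and similarly $\widetilde J_N''(\w)-\cos\w\,G_{1}(\w)=G_{2}(\w)$, $\widetilde J_N'''(\w)$, $G_{3}(\w)$ and the displayed quotient are once- or twice-integrated combinations of $\widetilde J_N^{(3)}$ and $\widetilde J_N^{(4)}$ that vanish to the appropriate order at $\w=0$. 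The constant $\pi/4$ is chosen precisely so that $a\w$ stays in $[0,\pi/8]$, where one checks directly from the closed form that $\widetilde J_N^{(4)}$ is non-negative and non-increasing, hence $\le\widetilde J_N^{(4)}(0)$; the remainders are then bounded by $\tfrac12\widetilde J_N^{(4)}(0)|\w|^{2}$ (resp.\ $\le 0.52\,\widetilde J_N^{(4)}(0)|\w|$), as claimed.

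I expect the main obstacle to be the constant-chasing rather than anything conceptual: keeping the combined bounds for $G_{2}$ and $G_{3}$ (differences and quotients of $\widetilde J_N'$ and $\widetilde J_N''$) tight enough that only $\pi^{4}$, not $\pi^{5}$ or $\pi^{6}$, survives, and patching the ``large $\w$'' closed-form bounds to the ``small $\w$'' Taylor bounds so that each inequality holds on all of $(0,\pi]$ with the stated constant. The two regimes do overlap, since for $|\w|\lesssim(n+1)^{-1}$ the right-hand sides blow up while the left-hand sides stay $O((n+1)^{\,4-s})$ — by the expansion of (ii) or by the classical Bernstein inequality $\|\widetilde J_N^{(j)}\|_{\infty}\le(2n)^{j}$ — but checking that the overlap is nonempty with exactly the constants $3,12.5,\dots$ is where the bookkeeping is heaviest.
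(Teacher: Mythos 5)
Your proposal follows essentially the same route as the paper's proof in Appendix~A: both rest on the factorization of the Jackson kernel through the Fej\'er kernel, Jordan's inequality $\sin(\w/2)\ge|\w|/\pi$ for the $|\w|^{-4}$ decay, parity and closed-form cancellation to remove the apparent poles of $G_1,G_3$ at $\w=0,\pi$, positivity of the Fourier coefficients (equivalently $0\le\widetilde J_N^{(4)}\le\widetilde J_N^{(4)}(0)$ on $|\w|\le\pi/(4(n+1))$) for the near-origin bounds, and the explicit coefficient sums for the values at $\w=0$. The only substantive differences are bookkeeping --- the paper organizes the cancellations at $\w=\pi$ via the Chebyshev identity $\sin((n+1)\theta)=\sin\theta\,U_n(\cos\theta)$ together with Bernstein's inequality for $U_n'$, and carries out the near-origin estimates term by term in the cosine expansion rather than by integral remainders --- and, as you yourself anticipate, what remains is the verification of the explicit numerical constants.
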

For the proof of Lemma~\ref{lemma_1} please see  Appendix~\ref{Lemma_1}. 

Now, using these bounds  we can formulate the localization property of the spherical derivatives of the Jackson kernel. More precisely, the bounds on the derivatives in normal coordinates are stated in the Theorem~\ref{deriv_bound_general} and the those in polar coordinates are derived in Theorem~\ref{deriv_bound_spheric}. In the proofs, we use several identities regarding the cross product, see Appendix~\ref{cross}. 

\begin{thm}\label{deriv_bound_general}
The Jackson kernel~\eqref{Jackson} fulfills for $x\ne y$ and $n=\lrf{N/2}$
\begin{align*}
   |J_N(x,y)|&\le \frac{\pi^4}{(n+1)^4d(x,y)^4}, \quad |X_n^y J_N(x,y)|\le \frac{3\cdot\pi^4}{(n+1)^3d(x,y)^4}, \\
|X_i^x X_n^y J_N(x,y)|&\le \frac{16.5\cdot\pi^4}{(n+1)^2d(x,y)^4}, \quad |X_i^x X_n^x J_N(x,y)|\le \frac{16.5\cdot\pi^4}{(n+1)^2d(x,y)^4}\\
|X_j^x X_i^x X_n^x J_N(x,y)|&\le \frac{101\cdot\pi^4}{(n+1)d(x,y)^4}
\end{align*}
and for the case $x= y$ we have 
\begin{align*}
   J_N(x,x)&=1, \quad X_i^x X_i^y J_N(x,x)= -X_i^x X_n^x J_N(x,x)= -\widetilde{J}''_N(0)\\
   X_n^yJ_N(x,x)&=X_iX_n^y J_N(x,x)= X_j^x X_i^y X_n^y J_N(x,x)=0.
\end{align*}

\end{thm}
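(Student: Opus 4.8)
The plan is to derive every claim from the chain rule applied to the zonal function $J_N(x,y)=\widetilde J_N(d(x,y))$, combined with the differential geometry of the geodesic distance $d(x,y)=\ar(\lr{x,y})$ on $\St$ and the closed-form estimates of Lemma~\ref{lemma_1}. \textbf{Step 1 (derivatives of the geodesic distance).} First I would compute $X_i^x d$, $X_n^y d$ and the mixed and pure second and third derivatives of $d$ with respect to the operators $X_i$ of \eqref{eq:1:0}. Writing $y=\cos d(x,y)\,x+\sin d(x,y)\,u$ for a unit vector $u\in T_x\St$, one gets $X_i^x d(x,y)=-\lr{y,\eta_i^x}/\sin d(x,y)=-\lr{u,\eta_i^x}$, hence $|X_i^x d|\le 1$, symmetrically $|X_n^y d|\le 1$, and $\lr{y,\eta_i^x}$ vanishes to first order as $x\to\pm y$. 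Differentiating once more — using that $\eta_i^x$ is constant when $y$ moves, so that the $y$-derivative of $\lr{y,\eta_i^x}$ in direction $\eta_n^y$ is $\lr{\eta_n^y,\eta_i^x}$, and that the $y$-derivative of $\sin d$ is $\cos d\cdot X_n^y d$ — produces the mixed Hessian $X_i^xX_n^y d$ and, analogously, the pure Hessian $X_i^xX_n^x d$; both are combinations of $1/\sin d$ and $\cos d/\sin d$ whose coefficients are inner products among $x,y,\eta_j^x,\eta_n^y$, which the cross-product identities of Appendix~\ref{cross} ($\eta_2=\eta_1\times(\cdot)$, etc.) let me reduce and bound by $1$. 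One further differentiation introduces $\cot d$, $1/\sin^2 d$ and $\cos d/\sin^2 d$.

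\textbf{Step 2 (assembling $J_N$).} Then $X_n^yJ_N=\widetilde J_N'(d)\,X_n^y d$, $X_i^xX_n^yJ_N=\widetilde J_N''(d)(X_i^x d)(X_n^y d)+\widetilde J_N'(d)\,X_i^xX_n^y d$, and likewise the pure third derivative is $\widetilde J_N'''(d)$, $\widetilde J_N''(d)$, $\widetilde J_N'(d)$ times products of the $X^\ast d$ and $X^\ast X^\ast d$ from Step~1. The key is to group terms so that every factor singular at $x=\pm y$ appears only through $\widetilde J_N^{(k)}(d)$, through $G_1(d)=\widetilde J_N'(d)/\sin d$, and through $G_2,G_3$ of \eqref{Gs} (the two combinations coming from the Hessian of $d$; note $G_3=G_1'$ and $G_2=\sin d\,G_3$). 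For each of these Lemma~\ref{lemma_1} supplies a bound of the form $c\,\pi^4/((n+1)^a|d|^4)$, while the surviving geometric prefactors are $\le 1$; multiplying and summing the finitely many explicit terms yields the stated constants $3$, $16.5$, $16.5$ and $101$.

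\textbf{Step 3 (the three ranges of $d$).} For $x\ne\pm y$ the combination of Step~2 is already the desired bound. At the antipodal configuration $x=-y$ ($d=\pi$) the $\cot d$- and $1/\sin^2 d$-type terms are only apparently singular: the geometric coefficients $\lr{y,\eta_i^x}$, $\lr{x,\eta_n^y}$ vanish there at exactly the compensating order, so the expressions enter through the $G_j$'s, whose Lemma~\ref{lemma_1} bounds involve only $|d|^{-4}\le\pi^{-4}$ and hence stay continuous up to $d=\pi$; the bound then extends by continuity. For $x=y$ one passes to the limit $d\to0$: since $\widetilde J_N$ is even, $\widetilde J_N(d(x,y))$ is near the diagonal a smooth function of $d(x,y)^2$, so every odd-order spherical derivative vanishes at $x=y$ and every even-order one reduces to a $\delta$-pattern. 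Together with $\widetilde J_N(0)=1$, $\widetilde J_N'(0)=\widetilde J_N'''(0)=0$, $\widetilde J_N''(0)=-n(n+2)/3$ from Lemma~\ref{lemma_1}, this gives $J_N(x,x)=1$, the vanishing of $X_n^yJ_N(x,x)$, of the mixed second derivatives with distinct indices and of all coincidence values with an odd number of derivatives, and $X_i^xX_i^yJ_N(x,x)=-X_i^xX_i^xJ_N(x,x)=-\widetilde J_N''(0)$, the sign reflecting the opposite orientations of moving $x$ toward $y$ versus $y$ toward $x$.

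\textbf{Main obstacle.} The chain rule is routine; the real work is in Steps~1--2 for the mixed second derivative $X_i^xX_n^yJ_N$ and the pure third derivative $X_j^xX_i^xX_n^xJ_N$, where one must write out the Hessian and the third covariant derivative of $d$ on $\St$ explicitly, keep the cross-product bookkeeping under control, and — most delicately — recognise exactly which singular combinations are $G_1,G_2,G_3$ so that Lemma~\ref{lemma_1} applies, while simultaneously checking that the geometric prefactors annihilate the residual poles at $x=-y$.
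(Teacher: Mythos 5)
Your proposal follows essentially the same route as the paper's proof: differentiate the zonal closed form \eqref{Jackson} by the chain rule, recognise the singular combinations as $G_1$, $G_2=\sin(d)\,G_3$ and $G_3=G_1'$ (you correctly read \eqref{Gs} with the intended minus sign), bound the residual geometric prefactors by $1$ using the cross-product identities of Appendix~\ref{cross}, and then invoke Lemma~\ref{lemma_1} to collect the constants. The only, immaterial, difference is at the degenerate configurations $x=\pm y$: the paper evaluates these exactly through the Legendre-series representation \eqref{X_n_y_Legendre}, where every term carries a factor $\lr{x,\eta_n^y}$ or $\lr{\eta_i^x,y}$ that vanishes at coincidence, whereas you argue by parity of $\widetilde{J}_N$ and continuity; both arguments are valid.
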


\begin{proof} The first estimate follows directly from Lemma~\ref{lemma_1}. For the second estimate, we first calculate the derivative of $J_N$ using its representation~\eqref{Jackson_as_sum} to get 
\begin{equation}\label{X_n_y_Legendre}
    X_n^y J_N(x,y)= \sum\limits_{l=0}^N \ds\frac{2l+1}{4\pi}\widetilde{w}_{l}P_{l}'(\lr{x,y})\lr{x, \eta_n^y}
\end{equation}
which immediately yields $X_n^y J_N(x,x)=X_n^y J_N(x,-x)=0 $. In case $x\in \St\setminus\{ y,-y\}$, we have for $X_n^y J_N(x,y)$ the following representation 
\begin{align*}
    X_n^y J_N(x,y)&= - \widetilde{J}'_{N}(d(x,y))\frac{\lr{x,\eta_n^y}}{\sin(d(x,y))}= \pm  \widetilde{J}'_{N}(d(x,y))\frac{\lr{x,\eta_i^y\times y}}{\sin(d(x,y))}\\
    &=\mp \widetilde{J}'_{N}(d(x,y))\frac{\lr{\eta_i^y,x\times y}}{\sin(d(x,y))}=\mp \widetilde{J}'_{N}(d(x,y))\lr{\eta_i^y, n_{x,y}}, 
\end{align*}
where $ n_{x,y}$ denotes the unique unit vector perpendicular to $x$ and $y$. This together with Lemma~\ref{lemma_1} yields  the second estimate. For the next estimates, let us compute the second derivatives  using the representation $X_n^y J_N$ in terms of Legendre polynomials \eqref{X_n_y_Legendre} to get 
\begin{align*}
    X_i^x X_n^y J_N(x,y)&= \sum\limits_{l=0}^N \ds\frac{2l+1}{4\pi} \widetilde{w}_{l}\Big( P_{l}''(\lr{x,y})\lr{x, \eta_n^y}\lr{y, \eta_i^x}+ P_{l}'(\lr{x,y}) \lr{\eta_i^x, \eta_n^y } \Big)\\
     X_i^x X_n^x J_N(x,y)&= \sum\limits_{l=0}^N \ds\frac{2l+1}{4\pi} \widetilde{w}_{l}\Big( P_{l}''(\lr{x,y})\lr{\eta_n^x,y}\lr{y,\eta_i^x}-\delta_{in}
     P_{l}'(\lr{x,y})\lr{x,y}\Big).
\end{align*}
For $i\ne n$, it follows $X_i^x X_n^y J_N(x,x)=X_i^x X_n^y J_N(x,-x)= X_i^x X_n^x J_N(x,x)=X_i^x X_n^x J_N(x,-x)=0$. In case $i=n$, we have 
\begin{equation}\label{second_deriv_at_xx}
    \begin{split}
    -X_i^x X_i^x J_N(x,x)&= X_i^x X_i^y J_N(x,x)= \sum\limits_{l=0}^N \ds\frac{2l+1}{4\pi} \widetilde{w}_{l} P_{l}'(1)\\
    &= \lim\limits_{t\to 1} - \frac{\widetilde{J}'_{N}(\ar{(t)})}{\sqrt{1-t^2}}= \lim\limits_{\w\to 0} - \frac{\widetilde{J}'_{N}(\w)}{\sin{(\w)}}
    = -\widetilde{J}''_{N}(0). 
    \end{split}
\end{equation}
Following the same line, we obtain
\begin{equation}\label{second_deriv_at_x-x}
    \begin{split}
    -X_i^x X_i^x J_N(x,-x)= X_i^x X_i^y J_N(x,-x)&= \sum\limits_{l=0}^N \ds\frac{2l+1}{4\pi} \widetilde{w}_{l} P_{l}'(-1)\\
   &=\lim\limits_{\w\to \pi} \frac{\widetilde{J}'_{N}(\w)}{\sin{(\w)}}
    = -\widetilde{J}''_{N}(\pi). 
    \end{split}
\end{equation}

For $x\in \St\setminus\{ y,-y\}$, using the closed form of $X_n^y J_N$, the second derivatives  reads as
\begin{equation*}
    \begin{split}
    X_i^x X_n^y J_N(x,y)&=\widetilde{J}''_{N}(d(x,y))\frac{\lr{x,\et_n^y}\lr{\et_i^x,y}}{\sin^2{d(x,y)}}\\
   &- \widetilde{J}'_{N}(d(x,y)) \left(\frac{\lr{x,\et_n^y}\lr{\et_i^x,y}\cos{(d(x,y))}}{\sin^3{(d(x,y))}}+ \frac{\lr{\et_i^x,\et_n^y}}{\sin{(d(x,y))}}\right)\\
   &= \lr{\et_i^y,n_{x,y}}\lr{\et_n^x,n_{x,y}}G_2(d(x,y))-\lr{\et_i^x,\et_n^y}G_1(d(x,y)),
    \end{split}
\end{equation*}
and analogically we get 
\begin{equation*}
    X_i^x X_n^x J_N(x,y)= \lr{\et_i^x,n_{x,y}\lr{\et_n^x, n_{x,y}}}G_2(d(x,y))+\delta_{in}\cos{(d(x,y))}G_1(d(x,y)), 
\end{equation*}
where $G_1,G_2$ are defined in \eqref{Gs} and again $n_{x,y}$ denotes the unique unit vector perpendicular to both $x$ and $y$. This results in
\begin{equation*}
    \begin{split}
    |X_i^x X_n^x J_N(x,y) |, |X_i^x X_n^y J_N(x,y)|&\le |G_2(d(x,y))|+|G_1(d(x,y))|\\
    &\le \frac{16.5\cdot \pi^4}{(n+1)^2d(x,y)^4},
    \end{split}
\end{equation*}
and yields the estimates for the second derivatives. The third derivatives can be represented as
\begin{equation*}
    \begin{split}
        X_j^x X_i^x X_n^y J_N(x,y)&= \sum\limits_{l=0}^N \ds\frac{2l+1}{4\pi} \widetilde{w}_{l} \left(P_{l}'''(\lr{x,y})\lr{\et_j^x,y}\lr{\et_i^x,y}\lr{\et_j^y,x}\right.\\
        &+P_{l}''(\lr{x,y})\lr{\et_i^x,y}\lr{\et_n^y,\et_j^x}+P_{l}''(\lr{x,y})\lr{\et_i^x,\et_n^y} \lr{y,\et_j^x}\\
        &\left.-\delta_{ij}\lr{x,\et_n^y}\big(P_{l}'(\lr{x,y})+P_{l}''(\lr{x,y})\lr{x,y}\big)\right)
    \end{split}
\end{equation*}
which immediately shows that 
\begin{equation*}
   X_j^x X_i^x X_n^y J_N(x,x)=X_j^x X_i^x X_n^y J_N(x,-x)=0. 
\end{equation*}
Using the closed form of the second derivatives for $x\in \St\setminus\{ y,-y\}$, the third derivatives can also be represented as
\begin{equation*}
    \begin{split}
        X_j^x X_i^x X_n^y J_N(x,y)&= \frac{\lr{\et_j^x,\et_n^y}-\delta_{i j}\lr{x,\et_n^y}\lr{x,y}}{\sin{(d(x,y))}}G_3(d(x,y))\\
        &-\frac{\lr{x,\et_n^y}\lr{\et_i^x,y}\lr{\et_j^x,y}\lr{x,y}}{\sin^3{d(x,y)}}\left(\widetilde{J}_{N}'''(d(x,y))+\widetilde{J}'_{N}(d(x,y))\right.\\
        &-3\cos{(d(x,y)}G_3(d(x,y))\Big)\\
        &+\frac{\lr{\et_i^x,\et_n^y}\lr{\et_j^x,y}}{\sin{d(x,y)}}G_3(d(x,y))+\delta_{ij}\cos{(d(x,y))}G_1(d(x,y)).
    \end{split}
\end{equation*}
Hence, using the estimates from Lemma~\ref{lemma_1}~, we obtain 
\begin{equation*}
    |X_j^x X_i^x X_n^y J_N(x,y)|\le\frac{101\cdot\pi^3}{(n+1)d(x,y)^4},
\end{equation*}
which finishes the proof. 
 \end{proof}
 
 \begin{thm}\label{deriv_bound_spheric}
Let $x,y,z\in \St$ be pairwise different and $x\ne -z$,then  with ${n=\lrf{N/2}}$ the entries of  the Hessian matrix $H$ of $J_N(\cdot,y)$, $X_n^y J_N (\cdot,y)$ in polar coordinates centered at $z$ fulfills 
 \begin{equation*}
    \begin{split}
       |(H J_N(x,y))_{i i}|&\le\frac{16.5\cdot\pi^4}{(n+1)^2d(x,y)^4},  \quad |(H J_N(x,y))_{i j}|\le\frac{14.5\cdot\pi^4}{(n+1)^2d(x,y)^4}  \\
       |(H X_n^y J_N(x,y))_{ii}|&\le\frac{103\cdot\pi^4}{(n+1)d(x,y)^4}, \quad 
       |(H X_n^y J_N(x,y))_{i j}|\le\frac{93.5\cdot\pi^4}{(n+1)d(x,y)^4}. 
       \end{split}
\end{equation*}
In case $y=z$ and $d(x,y)\le \frac{\delta}{(n+1)}$ with $\delta\le \pi /4$, we have 
\begin{equation*}
    \begin{split}
       |\widetilde{J}_{N}''(0)-(H J_N(x,z))_{i i}|&\le \frac{3}{20}\delta^2(n+1)^2,  \quad |(H J_N(x,z))_{i j}|\le \frac{3}{20}\delta^2 (n+1)^2\\
       |(H X_n^y J_N(x,z))_{11}|&\le \frac{3}{10}\delta (n+1)^3, \quad |(H X_y^n J_N(x,z))_{22}|\le \frac{1}{5}\delta(n+1)^3\\
       |(H X_n^y J_N(x,z))_{i j}|&\le \frac{1}{5}\delta(n+1)^3. 
       \end{split}
\end{equation*}
 \end{thm}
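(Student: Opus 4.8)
The plan is to carry out the whole computation in the polar coordinate chart $\varphi^{\mathrm{pol}}$ centered at $z$. Write $(r,\theta)$ for the polar coordinates of $x$, so $r=d(x,z)\in(0,\pi)$ — this open interval being exactly where the hypotheses $x\neq z$ and $x\neq -z$ enter — and set $\rho=d(x,y)$. Since $J_N(\cdot,y)=\widetilde{J}_N(d(\cdot,y))$ and $X_n^yJ_N(\cdot,y)$ has the closed form recorded in the proof of Theorem~\ref{deriv_bound_general}, the Hessian of either function is obtained from the chain rule together with the coordinate formula~\eqref{hessian_polar}. The only real work is to arrange the resulting expressions so that every factor which could blow up becomes one of the quantities $G_1,G_2,G_3$ from~\eqref{Gs} (or one of the further combinations estimated near $0$ in Lemma~\ref{lemma_1}); once that is done, Lemma~\ref{lemma_1} finishes the bounds, after one checks that every remaining geometric coefficient is bounded by $1$ in modulus.

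For the Hessian of $J_N(\cdot,y)$ in the general case, I would first express $\rho$ through $(r,\theta)$ by the spherical law of cosines, $\cos\rho=\cos r\cos a+\sin r\sin a\cos(\theta-b)$, with $a=d(y,z)$ and $b=\theta(y)$ constants — here one uses $y\neq z$, so that the azimuth $b$ of $y$ is defined. Differentiating this relation gives $\rho_r^2+(\rho_\theta/\sin r)^2=1$, $\rho_{rr}=\cot\rho\,(1-\rho_r^2)$ and the analogous identities for $\rho_{r\theta},\rho_{\theta\theta}$; geometrically, the second fundamental form of the geodesic circle $\{d(\cdot,y)=\rho\}$ equals $\cot\rho$ times its induced metric. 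Feeding $\widetilde{J}_N(\rho)$ into~\eqref{hessian_polar} and using these identities collapses the Hessian entries to
\[
 (HJ_N(x,y))_{11}=\sin^2\rho\,\rho_r^2\,G_2(\rho)+\cos\rho\,G_1(\rho),\qquad (HJ_N(x,y))_{12}=\sin^2\rho\,\rho_r(\rho_\theta/\sin r)\,G_2(\rho),
\]
and $(HJ_N(x,y))_{22}=\sin^2\rho\,(\rho_\theta/\sin r)^2\,G_2(\rho)+\cos\rho\,G_1(\rho)$. Since $\sin^2\rho\le 1$, $|\cos\rho|\le 1$, $\rho_r^2,(\rho_\theta/\sin r)^2\le 1$ and $|\rho_r(\rho_\theta/\sin r)|\le 1$, the bounds $|G_1(\rho)|+|G_2(\rho)|\le 16.5\,\pi^4/((n+1)^2\rho^4)$ and $|G_2(\rho)|\le 14.5\,\pi^4/((n+1)^2\rho^4)$ of Lemma~\ref{lemma_1} give the first two asserted inequalities. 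For $X_n^yJ_N(\cdot,y)$ I would proceed identically, starting from the closed forms of $X_n^yJ_N(x,y)$ and of its $x$-derivatives derived in the proof of Theorem~\ref{deriv_bound_general}: after applying~\eqref{hessian_polar} the entries become bounded combinations of $\widetilde{J}_N'(\rho)$, $G_1(\rho)$, $G_3(\rho)$ and the third-order combination appearing there, so the estimates of Lemma~\ref{lemma_1} for $|\widetilde{J}_N'''|,|G_1|,|G_3|$ produce $103\,\pi^4/((n+1)d(x,y)^4)$ on the diagonal and $93.5\,\pi^4/((n+1)d(x,y)^4)$ off it.

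For the case $y=z$ with $d(x,y)\le\delta/(n+1)\le\pi/(4(n+1))$ the refined near-zero estimates of Lemma~\ref{lemma_1} are available, so I would compute the Hessians directly. Since $J_N(x,z)=\widetilde{J}_N(r)$ is independent of $\theta$, formula~\eqref{hessian_polar} gives $(HJ_N(x,z))_{11}=\widetilde{J}_N''(r)$, $(HJ_N(x,z))_{22}=\cos r\,G_1(r)$ and $(HJ_N(x,z))_{12}=0$; bounding $|\widetilde{J}_N''(0)-\widetilde{J}_N''(r)|$ and $|\widetilde{J}_N''(0)-\cos r\,G_1(r)|$ via the near-zero estimates of Lemma~\ref{lemma_1}, together with $\widetilde{J}_N^{(4)}(0)\le\tfrac{3}{10}(n+1)^4$ (immediate from the closed form in Lemma~\ref{lemma_1}) and $r\le\delta/(n+1)$, yields the first line. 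Next, using $\lr{x,\eta_1^z}=\sin r\cos\theta$ and $\lr{x,\eta_2^z}=\sin r\sin\theta$ one gets $X_1^zJ_N(x,z)=-\widetilde{J}_N'(r)\cos\theta$ and $X_2^zJ_N(x,z)=-\widetilde{J}_N'(r)\sin\theta$, so~\eqref{hessian_polar} gives, for $f=X_1^zJ_N(\cdot,z)$,
\[
 (Hf)_{11}=-\widetilde{J}_N'''(r)\cos\theta,\qquad (Hf)_{12}=\sin\theta\,G_3(r),\qquad (Hf)_{22}=\cos\theta\cdot\frac{\widetilde{J}_N'(r)-\cos r\sin r\,\widetilde{J}_N''(r)}{\sin^2 r},
\]
and the same expressions with $\cos\theta,\sin\theta$ interchanged for $f=X_2^zJ_N(\cdot,z)$. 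The refined bounds $|\widetilde{J}_N'''(r)|\le\widetilde{J}_N^{(4)}(0)\,r$, $|G_3(r)|\le 0.52\,\widetilde{J}_N^{(4)}(0)\,r$ and $|(\widetilde{J}_N'(r)-\cos r\sin r\,\widetilde{J}_N''(r))/\sin^2 r|\le 0.52\,\widetilde{J}_N^{(4)}(0)\,r$ of Lemma~\ref{lemma_1}, combined with $\widetilde{J}_N^{(4)}(0)\le\tfrac{3}{10}(n+1)^4$ and $r\le\delta/(n+1)$, give the remaining three lines.

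The step I expect to be genuinely delicate is the closed-form evaluation of the polar Hessian of the first derivative $X_n^yJ_N(\cdot,y)$ in the general case: it is effectively a third-order mixed derivative of $J_N$, so~\eqref{hessian_polar} both differentiates an already complicated closed form and adds Christoffel corrections, and many intermediate terms carry negative powers of $\sin\rho$ (the singularity of $d(\cdot,y)$ at $-y$) and of $\sin r$ (the polar frame near $-z$). The bookkeeping must be organized — using the cross-product identities of Appendix~\ref{cross} and the same regrouping as in the proof of Theorem~\ref{deriv_bound_general} — so that every such term becomes one of $G_1,G_2,G_3$ or the explicitly controlled third-order combination, rather than a product to which a term-by-term triangle inequality would be applied and which would then be unbounded as $x\to -y$ or $x\to -z$. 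Once the dangerous factors are absorbed into quantities covered by Lemma~\ref{lemma_1} and the surviving coefficients ($\lr{\cdot,\cdot}$ of unit vectors, $\cos\rho$, $\cos r$, $\rho_r$, $\rho_\theta/\sin r$ and the like) are seen to be $\le 1$ in modulus, the stated constants come out by summing the constants supplied by Lemma~\ref{lemma_1}.
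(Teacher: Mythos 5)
Your strategy coincides with the paper's: work in the polar chart centered at $z$, apply the coordinate Hessian formula \eqref{hessian_polar}, reorganize every factor that is singular at $x=\pm y$ or $x=-z$ into one of $G_1,G_2,G_3$ or the near-zero combinations controlled by Lemma~\ref{lemma_1}, check that the surviving geometric coefficients have modulus at most $1$, and read off the constants. The one genuine difference is how you derive the intermediate identities: you invoke the spherical law of cosines and the intrinsic fact $\mathrm{Hess}\,\rho=\cot\rho\,(g-d\rho\otimes d\rho)$ for $\rho=d(\cdot,y)$, whereas the paper differentiates the scalar functions $f_\xi=\lr{\varphi^{\mathrm{pol}}(\cdot,\cdot),\xi}$ explicitly and rewrites everything through cross products. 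The two are equivalent, since $\rho_r=-\lr{n_{x,y},n_{z,x}}$ and $\rho_\theta/\sin r=-\lr{x,n_{x,y}\times n_{z,x}}$, and your route is arguably cleaner; it even resolves a small internal inconsistency of the paper, whose off-diagonal formula for $HJ_N$ displays $G_3$ while the stated bound $14.5\,\pi^4(n+1)^{-2}d(x,y)^{-4}$ is the one belonging to $G_2$ --- the intrinsic computation confirms that $G_2$ is correct. Your $y=z$ computations are complete, correct, and reproduce the paper's constants exactly.

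Two caveats. First, your displayed Hessian entries for $J_N(\cdot,y)$ carry a spurious factor $\sin^2\rho$ on the $G_2$-terms: from $\mathrm{Hess}(\widetilde{J}_N\circ\rho)=\widetilde{J}_N''\,d\rho\otimes d\rho+\widetilde{J}_N'\cot\rho\,(g-d\rho\otimes d\rho)$ one gets $(HJ_N)_{11}=\rho_r^2\,G_2(\rho)+\cos\rho\,G_1(\rho)$ with no $\sin^2\rho$. Since you bound that factor by $1$ anyway the final estimates are unaffected, but the identity as written is not correct. Second, the general-case Hessian of $X_n^yJ_N(\cdot,y)$ --- which you rightly single out as the delicate step --- is described but not executed: to actually land on $103$ and $93.5$ one must, as the paper does, compute $\partial_\psi G_1=-(\partial_\psi f_y/\sin\rho)\,G_3$ and $\partial_\psi\bigl(G_3/\sin\rho\bigr)$, assemble each entry as an explicit combination of $G_1$, $G_3$ and $\widetilde{J}_N'''+\widetilde{J}_N'-3\cos(\rho)G_3$ with unit-modulus coefficients (using $|\lr{x,\xi}/\sin\rho|\le1$), and then sum the constants of Lemma~\ref{lemma_1}. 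You name exactly the right ingredients, so this is bookkeeping rather than a missing idea, but the proposal does not verify the constants for that block.
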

 \begin{proof}
As it has been discussed at the beginning,  in polar coordinates centered at $z\in \St$, there is  the local parametrization 
 \begin{equation*}
   \phi^{\mathrm{pol}}  (r,\te )= \cos{(r)}z+\sin{(r)}\big(\cos{(\te)}\et_1^z+\sin{(\te)}\et_2^z\big), 
 \end{equation*}
 where $\et_1^z$, $\et_1^z $ form an orthonormal  basis  of $T_z \St$, such that $\et_2^z=\et_1^z\times z $, and the implicit inverse parametrization  for $x\in \St\setminus\{ z,-z\}$ is given by 
 \begin{equation*}
     \begin{split}
         r(x)&=d(x,z), \\
         \cos{(\te(x))}&=\frac{\lr{x,\et_1^z}}{\sin{(d(x,z))}}=-\lr{\et_2^z,n_{z,x}},\\
         \sin{(\te(x))}&= \frac{\lr{x,\et_2^z}}{\sin{(d(x,z))}}=\lr{\et_1^z,n_{z,z}}. 
     \end{split}
 \end{equation*}
First, let us compute the partial derivatives of the function $f_{\xi}(r, \te)=\lr{\phi^{\mathrm{pol}}  (r,\te ), \xi} $, for a vector  $\xi \in \St$. Carrying out simple computations results in 
 \begin{equation*}
     \begin{split}
         \pd_r  f_{\xi}(r, \te)&= - \sin{(r)}\lr{z,\xi}+\cos{(r)}
         \big( \cos{(\te)}\lr{\et_1^z, \xi}+\sin{(\te)}\lr{\et_2^z,\xi}\big),  \\
         \pd_{\te} f_{\xi}(r, \te)&=  \sin{(r)}\big( \cos{(\te)}\lr{\et_2^z,\xi}-\sin{(\te)} \lr{\et_1^z,\xi}\big), 
         \\
         \pd^2_{r r} f_{\xi}(r, \te)&=  - \cos{(r)}\lr{z,\xi}- \sin{(r)}\big( \cos(\te)\lr{\et_1^z,\xi}+ \sin(\te)\lr{\et_2^z,\xi}\big)
         \\
         \pd^2_{\te \te} f_{\xi}(r, \te)&=- \sin{(r)}\big(\cos(\te)\lr{\et_1^z,\xi}+ (\sin(\te)\lr{\et_2^z,\xi} \big)
         \\
          \pd^2_{r \te} f_{\xi}(r, \te)&= \pd^2_{\te r} f_{\xi}(r, \te)= \cos{(r)}\big( \cos{(\te)}\lr{\et_2^z,\xi}- \sin{(\te)}\lr{\et_1^z,\xi} \big). 
    \end{split}
 \end{equation*}
Thereafter, inserting the inverse  parametrization and simplifying the obtained expression, we get 
 \begin{equation*}
     \begin{split}
         \pd_r  f_{\xi}(r, \te)&=  \sin{(d(x,\xi))}\lr{n_{x,\xi},n_{z,x}},  \\
         \pd_{\te} f_{\xi}(r, \te)&= \sin{(d(x,z))}\sin{(d(x,\xi))} \lr{x, n_{x,\xi}\times n_{z,x}}, 
         \\
         \pd^2_{r r} f_{\xi}(r, \te)&=  - \cos{(d(x,\xi))},
         \\
         \pd^2_{\te \te} f_{\xi}(r, \te)&=- \sin{(d(x,z))} \cos{(d(x,z))}, \sin{(d(x,\xi))}\lr{n_{x,\xi},n_{z,x}} \\
         &\quad - \sin^2{(d(x,z)) \cos{(d(x,\xi))}},\\ 
          \pd^2_{r \te} f_{\xi}(r, \te)&= \pd^2_{\te r} f_{\xi}(r, \te)= \cos{(d(x,z))}\sin{(d(x,\xi))}\lr{x,n_{x,\xi}, n_{z,x}}. 
    \end{split}
 \end{equation*}
Now, let us  proceed with calculation of the full derivatives. For abbreviation, we use further the following notation $$f_{\xi}(x)= f_{\xi}(r(x), \te(x))=\lr{\phi^{\mathrm{pol}}  (r(x),\te(x) ), \xi}. $$ 
First, we start with the kernel $J_N(\cdot,y)$ and assume that $x\ne -y$. For the first element of the Hessian matrix we have
\begin{equation}\label{(H_J_N)_11}
    \begin{split}
        (H J_N(x,y))_{1 1}&= (\pd_r f_{y}(x))^2\left(\frac{\widetilde{J}_{N}''(\ar{(f_{y}(x))})}{(1-f_{y}(x)^2)}\right.\left.- \frac{\widetilde{J}_{N}'(\ar{(f_{y}(x)) f_{y}(x)}}{(1- f_{y}(x)^2)^{3/2}}\right)\\
        &- \pd^2_{r r} f_{y}(x)\frac{\widetilde{J}_{N}'(\ar{(f_{y}(x))}}{(1-f_{y}(x)^2)^{1/2}}\\
        &= (\lr{n_{x,y},n_{z,x}})^2 G_2(d(x,y))+\cos(d(x,y))G_1(d(x,y)), 
    \end{split}
\end{equation}
where $G_1$ and $G_2$ are given in \eqref{Gs}. Thus, using Lemma~\ref{lemma_1}, we get 
\begin{equation*}
    |(H J_N(x,y))_{1 1}|\le \frac{16.5\cdot\pi^4}{(n+1)^2 d(x,y)^4}. 
\end{equation*}

If $x=-y$, we again use the polynomial representation of $J_N$, given in the poof of Lemma~\ref{lemma_1}, to derive
\begin{equation*}
    \begin{split}
        (H J_N(x,-x))_{1 1}&= \sum\limits_{l=0}^N \frac{2l+1}{4\pi} \widetilde{w}_{l}\Big( P_{l}''(\lr{x,-x})(\pd_r f_{-x}(x))^2\\
        &+ P_{l}'(\lr{x,-x})\pd_{r r}^2 f_{-x}(x)\Big)\\
        &= \sum\limits_{l=0}^N \frac{2l+1}{4\pi}\widetilde{w}_{l} \Big( P_{l}''(-1)(\lr{x\times (-x), n_{z ,x}})^2\\
        &- P'_{l}(-1)\cos(d(x,-x))\Big) =  \sum\limits_{l=0}^N \frac{2l+1}{4\pi}\widetilde{w}_{l}P'_{l}(-1).
    \end{split}
\end{equation*}
Following the same argumentation as in \eqref{second_deriv_at_x-x}, we get 
\begin{equation*}
 (H J_N (x,-x))_{1 1}=  \widetilde{J}_N(\pi). 
\end{equation*}
In case $y=z$ and $d(x,z)\le \frac{\pi}{4(n+1)}$, from the representation \eqref{(H_J_N)_11}, we get
\begin{equation*}
(H J_N (x,-x))_{1 1}=  \widetilde{J}_N(d(x,z)). 
\end{equation*}
and combining this together with Lemma~\ref{lemma_1} results in 
\begin{equation*}
    |\widetilde{J}''_N(0) - (H J_N (x,z))_{11}|\le \frac{\widetilde{J}^{(4)}_N(0)}{2} d(x,z)^2\le \frac{3}{20}d(x,z)^2(n+1)^4\le \frac{3}{20} \delta^2(n+1)^2.
\end{equation*}
For the  second diagonal entry of the Hessian matrix, we have
\begin{equation*}
    \begin{split}
        (H J_N(x,y))_{2 2}&= \lr{x, n_{x,y}\times n_{z,x}}^2\left(\widetilde{J}''_N(d(x,y))-\frac{\widetilde{J}'_N(d(x,y)) \cos{(d(x,y))}}{ \sin{(d(x,y))})} \right)\\
        &+  \cos{(d(x,y))}\frac{\widetilde{J}'_N(d(x,y))}{\sin{(d(x,y))}}\\
        &= \lr{x, n_{x,y}\times n_{z,x}}^2 G_2(d(x,y))+ \cos{(d(x,y))}G_1(d(x,y))
    \end{split}
\end{equation*}
which provides the following estimate 
\begin{equation*}
    |(H J_N(x,y))_{2 2}|\le \frac{16.5\cdot \pi^4}{(n+1)^4 d(x,y)^4}
\end{equation*}
and gives zero entry $ (H J_N(x,-x))_{2 2}=0$ for $y=-x$.

In case $y=z$, we get 
\begin{equation*}
    (H J_N(x,z))_{2 2}=\cos{(d(x,z))}\frac{\widetilde{J}'_N(d(x,z))}{\sin{(d(x,z))}},
\end{equation*}
that together with Lemma~\ref{lemma_1} provides the estimate
\begin{equation*}
    |\widetilde{J}''_N(0) - (H J_N (x,z))_{22}|\le \frac{\widetilde{J}^{(4)}_N(0)}{2} d(x,z)^2\le \frac{3}{20}d(x,z)^2(n+1)^4\le \frac{3}{20} \delta^2(n+1)^2.
\end{equation*}

For the off-diagonal entries of the Hessian matrix, we have 
\begin{equation*}
    (H J_N (x,y))_{i j}= \lr{n_{x,y},n_{z,x}}\lr{x, n_{x,y}\times n_{z,x} } G_3(d(x,y)),
\end{equation*}
and analogous procedure to the one carried out  above provides
\begin{equation*}
 \begin{split}
    |(H J_N (x,y))_{i j}|&\le \frac{14.5\cdot \pi^4}{(n+1)^2  d(x,y)^4},  \quad    |(H J_N (x,z))_{i j}|\le \frac{3}{20} \delta^2 (n+1)^2, \\
    (H J_N (x,-x))_{i j}&=0.
     \end{split}
\end{equation*}
 For the derivative $X_n^y J_N(\cdot, y)$ of the kernel the estimates can be derived folowing the same lines. Namely, the derivative $X_n^y J_N(\cdot, y)$ is defined as 
 \begin{equation*}
     X_n^y J_N(x, y)= -\frac{\widetilde{J}'_N(d(x,y))}{\sin{(d(x,y))}} \lr{x,\et_n^y},
 \end{equation*}
where $\et_n^y\in T_z\St$. Then, with $\xi= \et_n^y$ and the abbreviations in \eqref{Gs}, we have 
\begin{equation*}
    \begin{split}
        \pd_r X_n^y J_N(x, y)&= \frac{\lr{x,\xi} \pd_r f_{y}(x)}{\sin(d(x,y))} G_3(d(x,y))- \pd_r f_{\xi}(x) G_1(d(x,y))\\
        \pd_{\te} X_n^y J_N(x, y)&= \frac{\lr{x,\xi} \pd_{\te} f_{y}(x)}{\sin(d(x,y))} G_3(d(x,y))- \pd_{\te} f_{\xi}(x) G_1(d(x,y)). 
    \end{split}
\end{equation*}
In polar coordinates centered at $z\in \St$, we have for $y\ne -x$ and $\psi\in \{r, \te\}$ 
\begin{equation*}
    \begin{split}
        \pd_{\psi} G_1(d(x,y))&=- \frac{\pd_{\psi} f_{y}(x)}{\sin(d(x,y))}G_3(d(x,y)),\\
        \pd_{\psi}\left(\frac{G_3(d(x,y))}{\sin(d(x,y))} \right) &= - \frac{\pd_{\psi} f_{y}(x)}{\sin^3(d(x,y))}\Big(\widetilde{J}'''_N(d(x,y))+ \widetilde{J}'_N(d(x,y))\\
        & \quad - 3\cos{(d(x,y))}G_3(d(x,y))\Big). 
    \end{split}
\end{equation*}
Therefore, for the first diagonal entry of the Hessian matrix one has
\begin{equation*}
    \begin{split}
        (H X_n^y J_N(x,y))_{11}& = \pd_{r r}X_n^y J_N(x,y)\\ 
        &= \frac{\pd_{r} f_{\xi}(x)\pd_{r} f_{y}(x) }{\sin(d(x,y))}G_2(d(x,y))+\frac{\lr{x,\xi}\pd_{r r} f_{y}(x)}{\sin(d(x,y))}G_3(d(x,y))\\
        &+ \lr{x,\xi} \pd_{r} f_{y}(x) \pd_r\left( \frac{G_3(d(x,y))}{\sin{(d(x,y))}}\right)\\
        &- \pd_{r r}f_{\xi}(x)G_1(d(x,y))-\pd_{r}f_{\xi}(x)  \pd_r G_1(d(x,y)).
    \end{split}
\end{equation*}
Inserting the pre-computed derivatives results in 
\begin{equation*}
    \begin{split}
        (H X_n^y J_N(x,y))_{11}&= 2 \lr{n_{x,\xi},n_{z,x} }\lr{n_{z,x},n_{x,y} }\sin{(d(x,\xi))}G_3(d(x,y))\\
        &- \frac{\lr{x,\xi} \cos{(d(x,y))}}{\sin{(d(x,y))}}G_3(d(x,y))+ \lr{x,\xi} G_1(d(x,y))\\
        &-\frac{\lr{x,\xi}\lr{n_{z,x},n_{x,y}}^2}{\sin{(d(x,y))}}\left(
       \widetilde{J}'''_N(d(x,y))+ \widetilde{J}'_N(d(x,y)) \right)\\
      & +\frac{3\lr{x,\xi} \lr{n_{z,x},n_{x,y}}^2\cos{(d(x,y))}}{\sin{(d(x,y))}}G_3(d(x,y)).
    \end{split}
\end{equation*}
Since the following equalities  hold 
\begin{equation*}
    \lr{x,\xi}= \lr{x,\et_n^y}=\pm \lr{\et_i^y,x\times y}= \pm \lr{\et_i^y,n_{x,y}} \sin(d(x,y)),
\end{equation*}
we have that 
\begin{equation*}
    \left|\frac{\lr{x,\xi}}{\sin{(d(x,y))}} \right|\le 1.
\end{equation*}
Then using Lemma~\ref{lemma_1},  the above derived representation of $(H X_n^y J_N(x,y))_{11}$ we get
\begin{equation*}
    |(H X_n^y J_N(x,y))_{11}|\le \frac{103.5 \cdot \pi^4}{(n+1)d(x,y)^4}.
\end{equation*}
In case $y=-x$, we again use the polynomial representation of $J_N$, which directly provides 
  $  (H X_n^y J_N(x,-x))_{11}=0.  $

In case $y=z$ and $d(x,z)\le \frac{\pi}{4(n+1)}$, we have 
\begin{equation*}
    (H X_n^y J_N(x,-x))_{11}=-\frac{\lr{x,\xi}}{\sin{(d(x,y))}} \widetilde{J}'''_N(d(x,z)),
\end{equation*}
and again making use of Lemma~\ref{lemma_1}, we get the estimate
\begin{equation*}
    |(H X_n^y J_N(x,-x))_{11}|\le \widetilde{J}^{(4)}_N(0) d(x,z)\le \frac{3}{10}\delta(n+1)^3. 
\end{equation*}
Using an analogous procedure as above, the second diagonal entry can be represented as 
\begin{equation*}
    \begin{split}
        (H X_n^y J_N(x,y))_{22}&= 2\lr{x, n_{x,\xi}\times n_{z,x} }\lr{x, n_{x,y}\times n_{z,x}}  \sin(d(x,\xi)) G_3(d(x,y))\\
        &- \frac{\lr{x,\xi}\cos{(d(x,y))}}{\sin{(d(x,y))}}G_3(d(x,y))+\lr{x,\xi}G_1(d(x,y))\\
        &- \frac{\lr{x,\xi}\lr{x, n_{x,y}\times n_{z,x}}^2}{\sin{(d(x,y))}} \left( \widetilde{J}'''_N(d(x,y))+ \widetilde{J}'_N(d(x,y))\right),\\
        &+ \frac{3\lr{x,\xi} \lr{x, n_{x,y}\times n_{z,x}}^2\cos(d(x,y))}{\sin{(d(x,y))}}G_3(d(x,y)),
    \end{split}
\end{equation*}
which shows that 
\begin{equation*}
    |(H X_n^y J_N(x,y))_{22}|\le \frac{103.5\cdot \pi^4}{(n+1)d(x,y)^4}.
\end{equation*}
Moreover, we also have $(H X_n^y J_N(x,-x))_{22}=0$, and using Lemma~\ref{lemma_1} gives 
\begin{equation*}
\begin{split}
    |(H X_n^y J_N(x,z))_{22}|&= \left|\frac{\lr{x,\xi}}{\sin^2(d(x,y))}\left(\widetilde{J}'_N(d(x,z))\right.\right.\\
    &\left.\left.-\cos(d(x,z))\sin{(d(x,z))}\widetilde{J}''_N(d(x,z))\right)\right|\\
    &\le 0.52 \cdot \widetilde{J}^{(4)}_N(0) d(x,z)\le \frac{1}{5}\delta(n+1)^3. 
    \end{split}
\end{equation*}
Lastly, similar computation for  the off-diagonal entries shows 
\begin{equation*}
 \begin{split}
    (H X_n^y J_N(x,y))_{i j}&= \lr{x,\lr{n_{x,y},n_{z,x}}(n_{x,\xi}\times n_{z,x})}\sin{(d(x,\xi))}G_3(d(x,y))\\
    &+ \lr{x,\lr{n_{x,\xi},n_{z,x}}(n_{x,y}\times n_{z,x})}\sin{(d(x,\xi))}G_3(d(x,y))\\
    &-\frac{\lr{x,\xi}\lr{x,n_{x,y}\times n_{z,x}}\lr{n_{x,y},n_{z,x}}}{\sin{(d(x,y))}}\left(\widetilde{J}'''_N(d(x,y)) +\widetilde{J}'_N(d(x,y))\right)\\
    &+ \frac{3 \lr{x,\xi}\lr{x, n_{x,y}\times n_{z,x}}\lr{n_{x,y}, n_{z,x}}\cos{(d(x,y))}}{\sin{(d(x,y))}}G_3(d(x,y)), 
     \end{split}
\end{equation*}
which results in 
\begin{equation*}
 \begin{split}
    |(H X_n^y J_N(x,y))_{i j}|&\le \frac{93.5\cdot \pi^4}{(n+1)d(x,y)^4}, \quad (H X_n^y J_N(x,-x))_{i j}=0, \\
    |(H X_n^y J_N(x,z))_{i j}|&= \left|\lr{x, n_{x,\xi}\times n_{z,x}} \sin{(d(x,y))}G_3(d(x,y)) \right|\\
    & \le \frac{1}{5} \delta (n+1)^3.
     \end{split}
\end{equation*}
 \end{proof}
 
 We finish this section bounding sums of point-wise expression from the previous theorems. 
 
 Let us consider  a discrete set $\X \subset \St$ such that the minimal separation distance between the set elements is bounded from below in the following way 
\begin{equation}\label{separation}
    \rho(\X)= \min\limits_{x_i,x_j\in \X, x_i\ne x_j} d(x_i,x_j)\ge \frac{\nu}{n+1}. 
\end{equation}
Involving classical ringing arguments on the sphere, the following result holds true.  

\begin{lem}\label{summation_lemma}
Let $x_m\in \X$, where $\X\subset \St$ is a discrete set that satisfies a separation condition \eqref{separation}. Let a vector $x\in\St$ be such that $d(x,x_m)\le \varepsilon \frac{\nu}{n+1}$ for $0\le \varepsilon\le 1/2$. Suppose that a function $f\colon \St \to \St $ fulfills 
\begin{equation}\label{locality_of_f}
    |f(x,y)|\le \frac{c_f}{((n+1) d(x,y))^s}
\end{equation}
for $x\ne y$ and some $s\ge 3$, then 
\begin{equation*}
    \sum\limits_{x_k\in \X \setminus \{x_m\}} |f(x,x_k)|\le \frac{c_f a_{\varepsilon}}{\nu^s},
\end{equation*}
where $a_{\varepsilon}= \zeta(s-1)\cdot \min\{9\cdot (1-\varepsilon)^{-s}+25, 25\cdot (1-\varepsilon)^{-s} \}$. Here $\zeta$ denotes the Riemannian Zeta function.
\end{lem}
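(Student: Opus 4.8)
The plan is to control the sum $\sum_{x_k\in\X\setminus\{x_m\}}|f(x,x_k)|$ by a classical dyadic/annular decomposition of $\St$ around the point $x$, together with a counting bound on how many points of a $\nu/(n+1)$-separated set can lie in a spherical cap of a given radius. Throughout I write $\delta=\nu/(n+1)$ for the separation scale, so that $\rho(\X)\ge\delta$ and $d(x,x_m)\le\varepsilon\delta$.

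\emph{Step 1: a packing estimate.} First I would establish that for any $R>0$ the number of points of $\X$ in the geodesic ball $B_R(x)$ is at most $C\,(R/\delta)^2$ for an explicit constant $C$ (one can take $C$ of order a small integer by comparing areas of caps of radius $\delta/2$, which are disjoint, with a cap of radius $R+\delta/2$; on $\St$ the area of a cap of radius $t$ is $2\pi(1-\cos t)$, which is comparable to $t^2$ for small $t$ and bounded by an absolute constant always). In fact for the two-dimensional sphere the cap-area comparison gives the clean bound $\#(\X\cap B_R(x))\le \big((2R+\delta)/\delta\big)^2$ for $R\le\pi$; I would record a version tuned to get the constants $9$ and $25$ appearing in $a_\varepsilon$.

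\emph{Step 2: dyadic shells.} Order $\X\setminus\{x_m\}$ by distance from $x$. Because $x_m$ is within $\varepsilon\delta$ of $x$ and all other points are at distance $\ge\delta$ from $x_m$, the triangle inequality gives $d(x,x_k)\ge(1-\varepsilon)\delta$ for every $x_k\in\X\setminus\{x_m\}$. Partition the remaining points into shells $\X_j=\{x_k: 2^{j}(1-\varepsilon)\delta\le d(x,x_k)<2^{j+1}(1-\varepsilon)\delta\}$ for $j=0,1,2,\dots$ (only finitely many are nonempty since $\diam\St=\pi$). By Step 1, $\#\X_j\le C\,4^{j}\cdot$const, while on $\X_j$ the hypothesis \eqref{locality_of_f} gives $|f(x,x_k)|\le c_f\big((n+1)2^{j}(1-\varepsilon)\delta\big)^{-s}=c_f\,2^{-js}(1-\varepsilon)^{-s}\nu^{-s}$. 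Summing over the shell and then over $j$ produces a geometric series $\sum_j 4^{j}2^{-js}=\sum_j 2^{-j(s-2)}$, which converges for $s>2$; since we assume $s\ge3$ this is fine, and the sum is bounded by $\zeta(s-1)$ after the bookkeeping is arranged so that the index shift matches the stated form (the $\zeta(s-1)$ factor is exactly what a more careful annular-counting argument yields in place of the crude geometric series).

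\emph{Step 3: assembling the constant.} Collecting the bounds gives $\sum_{x_k\in\X\setminus\{x_m\}}|f(x,x_k)|\le \frac{c_f}{\nu^s}\,\zeta(s-1)\,(1-\varepsilon)^{-s}\cdot(\text{absolute constant})$. To obtain the sharper $\min\{9(1-\varepsilon)^{-s}+25,\ 25(1-\varepsilon)^{-s}\}$, I would treat the innermost shell (the $O(1)$ points nearest $x$, which force the $(1-\varepsilon)^{-s}$ blow-up) separately from the outer shells, where $d(x,x_k)$ is already $\gtrsim\delta$ independently of $\varepsilon$ and so no $(1-\varepsilon)^{-s}$ factor is needed — that splitting yields the "$9(1-\varepsilon)^{-s}+25$" alternative, while bounding everything uniformly by the worst shell yields "$25(1-\varepsilon)^{-s}$", and taking the minimum is then automatic. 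The main obstacle is purely bookkeeping: extracting the precise numerical constants $9$ and $25$ requires a careful annular count (how many $\delta$-separated points fit in the annulus $r\le d(x,\cdot)<2r$, not just in the ball), and verifying that the resulting series sums to exactly $\zeta(s-1)$ rather than merely being $O(1)$; none of this is conceptually hard, but it must be done with care since the whole point of the paper is explicit constants.
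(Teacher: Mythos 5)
Your overall strategy -- use the separation to get a packing bound, decompose the complement of $\{x_m\}$ into annuli around $x$, and sum the localization bound shell by shell -- is exactly the paper's strategy, and your cap-area packing argument in Step 1 is essentially the paper's treatment of the innermost shell (disjoint caps of radius $\frac{\nu}{2(n+1)}$ inside a cap of radius $\frac{3\nu}{2(n+1)}$, ratio of areas $\bigl(1+2\cos(\tfrac{\nu}{2(n+1)})\bigr)^2\le 9$). The one substantive divergence is the granularity of the shells, and it matters because the lemma is precisely about the explicit constant. You use dyadic annuli $2^j(1-\varepsilon)\delta\le d<2^{j+1}(1-\varepsilon)\delta$, which give a geometric series $\sum_j 2^{-j(s-2)}$; as you yourself note, this yields an $O(1)$ constant but not $\zeta(s-1)$, and you defer the fix to ``more careful bookkeeping.'' The paper instead uses \emph{arithmetic} annuli $S_m=\{y: \tfrac{\nu m}{n+1}\le d(x,y)\le\tfrac{\nu(m+1)}{n+1}\}$ of width equal to the separation scale, together with the count $\mathrm{card}(\X\cap S_m)\le 25m$ for $m\ge1$ (quoted from Keiner--Kunis); since $d(x,x_k)\ge m\nu/(n+1)$ on $S_m$, the tail is $\sum_{m\ge1}25m\,(m\nu)^{-s}=25\,\zeta(s-1)\,\nu^{-s}$, which is where both the $25$ and the $\zeta(s-1)$ come from. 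So to land on the stated $a_\varepsilon$ you must replace your dyadic shells by these unit-width shells and import (or reprove) the per-annulus count $\le 25m$.

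Your explanation of the two branches of the minimum is also not quite what the paper does. The branch $9(1-\varepsilon)^{-s}+25$ is as you describe: the innermost shell around $x$ carries the $(1-\varepsilon)^{-s}$ factor (since $d(x,x_k)\ge(1-\varepsilon)\nu/(n+1)$ there) and the outer shells do not. But the branch $25(1-\varepsilon)^{-s}$ is \emph{not} obtained by ``bounding everything by the worst shell''; it comes from re-centering the annuli at $x_m$ with width $(1-\varepsilon)\nu/(n+1)$, using $d(x,x_k)\ge d(x_m,x_k)-d(x,x_m)\ge(1-\varepsilon)\tfrac{\nu m}{n+1}$ on the $m$-th such annulus, so that every shell (there is no exceptional innermost one in this decomposition) contributes the factor $(1-\varepsilon)^{-s}$ and the count $\le 25m$ again produces $25\,\zeta(s-1)$. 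With those two adjustments -- arithmetic shells with the $25m$ count, and the second decomposition centered at $x_m$ -- your outline closes completely.
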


Now, based on Lemma~\ref{summation_lemma} we obtain the following bound for the point-wise summation of the Jackson kernel and its derivatives. 

\begin{lem}\label{summation_lemma_for_Jackson}
Let $x_m\in \X$, where $\X\subset \St$ is a discrete set, which obeys a separation condition \eqref{separation}. Let $x\in\St$ such that $d(x,x_m)\le \varepsilon \frac{\nu}{n+1}$ for $0\le \varepsilon \le 1/2$, then it follows 
\begin{equation*}
\begin{split}
   & \sum\limits_{x_k\in \X \setminus \{x_m\}} |J_N(x,x_k)|\le \frac{\pi^4 \cdot \tilde{a}_{\varepsilon}}{\nu^4}, \; \sum\limits_{x_k\in \X \setminus \{x_m\}} |X_n^y J_N(x,x_k)|\le \frac{3\cdot \pi^4 \cdot \tilde{a}_{\varepsilon} (n+1)}{\nu^4},\\
    & \sum\limits_{x_k\in \X \setminus \{x_m\}} |X_i^x X_i^x J_N(x,x_k)|, \sum\limits_{x_k\in \X \setminus \{x_m\}} |(H J_N(x,x_k))_{ii}| \le \frac{16.5 \cdot \pi^4 \cdot \tilde{a}_{\varepsilon} (n+1)^2}{\nu^4}, \\
   & \sum\limits_{x_k\in \X \setminus \{x_m\}} |X_i^x X_i^x X_j^y J_N(x,x_k)|\le \frac{101 \cdot \pi^4 \cdot \tilde{a}_{\varepsilon} (n+1)^3}{\nu^4},\\
    &\sum\limits_{x_k\in \X \setminus \{x_m\}} |(H J_N(x,x_k))_{i j}|\le \frac{14.5 \cdot \pi^4 \cdot \tilde{a}_{\varepsilon} (n+1)^2}{\nu^4},\\
    &\sum\limits_{x_k\in \X \setminus \{x_m\}} |(H X_i^y J_N(x,x_k))_{i i}|\le \frac{103.5 \cdot \pi^4 \cdot \tilde{a}_{\varepsilon} (n+1)^3}{\nu^4},\\
    &\sum\limits_{x_k\in \X \setminus \{x_m\}} |(H X_i^y J_N(x,x_k))_{i j}|\le \frac{93.5 \cdot \pi^4 \cdot \tilde{a}_{\varepsilon} (n+1)^3}{\nu^4},
\end{split}
\end{equation*}
where $\tilde{a}_{\varepsilon}= \zeta(3)\cdot \min\{9\cdot (1-\varepsilon)^{-4}+25, 25\cdot (1-\varepsilon)^{-4} \}$ and  $\zeta$ denotes the Riemannian Zeta function.
\end{lem}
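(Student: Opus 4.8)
The plan is to obtain every inequality as a direct application of Lemma~\ref{summation_lemma}, with the point-wise localization bounds supplied by Theorem~\ref{deriv_bound_general} and Theorem~\ref{deriv_bound_spheric}. The key observation is that, although those bounds are written with different negative powers of $n+1$ out front, each of them is of the normalized form required by Lemma~\ref{summation_lemma},
\begin{equation*}
    |f(x,y)| \le \frac{c_f}{((n+1)\,d(x,y))^{4}}, \qquad x\ne y,
\end{equation*}
i.e. with decay exponent $s=4\ge 3$ in the geodesic distance, once the appropriate power of $n+1$ is absorbed into the constant. Concretely one reads off $c_f=\pi^4$ for $J_N$; $c_f=3\pi^4(n+1)$ for $X_n^yJ_N$; $c_f=16.5\pi^4(n+1)^2$ for $X_i^xX_i^xJ_N$ and for $(HJ_N)_{ii}$; $c_f=14.5\pi^4(n+1)^2$ for $(HJ_N)_{ij}$; $c_f=101\pi^4(n+1)^3$ for $X_j^xX_i^xX_n^yJ_N$; $c_f=103.5\pi^4(n+1)^3$ for $(HX_n^yJ_N)_{ii}$; and $c_f=93.5\pi^4(n+1)^3$ for $(HX_n^yJ_N)_{ij}$.

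Before invoking Lemma~\ref{summation_lemma} I would record the elementary geometric prerequisite that $x$ avoids the summation set. With $x_m\in\X$, $d(x,x_m)\le\varepsilon\nu/(n+1)$ and $0\le\varepsilon\le1/2$, the triangle inequality together with the separation condition \eqref{separation} gives, for every $x_k\in\X\setminus\{x_m\}$,
\begin{equation*}
    d(x,x_k)\ \ge\ d(x_m,x_k)-d(x,x_m)\ \ge\ (1-\varepsilon)\frac{\nu}{n+1}\ >\ 0 ,
\end{equation*}
so $x\ne x_k$ and all the point-wise estimates above are legitimately applicable at the pair $(x,x_k)$; note that the antipodal configuration $x=-x_k$ is already included, since the bounds in Theorem~\ref{deriv_bound_general} and Theorem~\ref{deriv_bound_spheric} hold for all $x\ne y$.

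With these two facts in hand, applying Lemma~\ref{summation_lemma} with $s=4$ to each function $f$ in turn yields
\begin{equation*}
    \sum_{x_k\in\X\setminus\{x_m\}}|f(x,x_k)|\ \le\ \frac{c_f\,a_\varepsilon}{\nu^{4}},
\end{equation*}
and since the constant $a_\varepsilon$ of Lemma~\ref{summation_lemma} evaluated at $s=4$ is precisely $\tilde a_\varepsilon=\zeta(3)\cdot\min\{9(1-\varepsilon)^{-4}+25,\ 25(1-\varepsilon)^{-4}\}$, substituting the list of constants $c_f$ from the first paragraph reproduces each of the claimed inequalities.

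Regarding difficulty: there is essentially no analytic obstacle, as all the real work lives in the earlier theorems; the only steps needing care are bookkeeping ones — matching every raw estimate to the normalized form so that the exponent is uniformly $s=4$ while the prefactor carries the correct power of $n+1$, and checking (as done above) that $d(x,x_m)\le\varepsilon\nu/(n+1)$ with $\varepsilon\le1/2$ together with \eqref{separation} forces $x\notin\X\setminus\{x_m\}$. One should also reconcile the minor mismatch between the constant $103$ appearing in Theorem~\ref{deriv_bound_spheric} and the $103.5$ stated here by retaining the slightly larger value.
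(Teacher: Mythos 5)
Your proposal is correct and follows exactly the route the paper takes: the paper's own proof is a one-line appeal to combining the point-wise bounds of Theorems~\ref{deriv_bound_general} and~\ref{deriv_bound_spheric} with Lemma~\ref{summation_lemma} at $s=4$, which is precisely your normalization-and-substitution argument. Your added checks (that the separation condition forces $x\ne x_k$ for all $x_k$ in the sum, and the $103$ versus $103.5$ bookkeeping) are sound and in fact more explicit than what the paper records.
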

\begin{proof}
The above given estimates follow immediately after combining the estimates from Theorems~\ref{deriv_bound_general} and \ref{deriv_bound_spheric} and the results of Lemma~\ref{summation_lemma}. 
\end{proof}
Having  established the necessary localization estimates for the Jackson kernel and its derivatives in this section, we are now able to construct and validate a dual certificate using the Hermite interpolation in the next section. 

\section{Dual Certificate on the Sphere}

\subsection{Solution of the Interpolation Problem}

In this section, we construct a dual certificate as a solution of the Hermite type interpolation problem. Let us remind, we would like to solve the interpolation problem 
\begin{equation}\label{int_conds}
    \begin{split}
   q(x_i)&=u_i,     \quad x_i \in \X\\
   |q(x)|&< 1,  \quad x\in \St\setminus \X, 
    \end{split}
\end{equation}
where $q$ needs to be an element of $\Pi_N(\St)$. Following Section~\ref{SR_section}, we consider the interpolant of the form 
\begin{equation}\label{DC}
    q(x)= \sum\limits_{i=0}^{M}\alpha_{0,i}J_N(x, x_i)+ \alpha_{1,i}X_1^y J_N(x, x_i)+\alpha_{2,i}X_2^y J_N(x, x_i). 
\end{equation}
and  would like to show the the linear system of equations  
\begin{equation}\label{interpol_system}
    K\alpha=   \begin{pmatrix}
J_N& X_1^x J_N & X_2^x J_N\\
X_1^y J_N & X_1^x X_1^y J_N&  X_2^x X_1^y J_N\\
X_2^y J_N & X_1^x X_2^y J_N&  X_2^x X_2^y J_N
   \end{pmatrix}
   \begin{pmatrix}
\alpha_0\\
\alpha_1 \\
\alpha_2 
   \end{pmatrix}=
   \begin{pmatrix}
u\\
0 \\
0
   \end{pmatrix}
\end{equation}
has a solution for the Jackson kernel $J_N$  chosen in Section~\ref{loc_kernels_sec}. 

We assume that the interpolation points $\X=\{x_j\}_{j=1}^M$ obey a minimal separation distance of the form 
\begin{equation}\label{separation_DC_section}
    \rho(\X)= \min\limits_{x_i,x_j\in \X, x_i\ne x_j} d(x_i,x_j)\ge \frac{\nu}{n+1},  
\end{equation}
where $n=\lrf{N/2}$. 
For abbreviation, we use the following notation
\begin{equation*}
    J_{ij}= X_i^x X_j^y J_N.
\end{equation*}
As have been discussed  before, to show the existence of the interpolating polynomial $q$, we need to show the invertibility  of the matrix $K$. Even more, we need to partially compute the inverse of  $K$ to derive bounds on the coefficients~$\alpha$. The following Lemma  provides bounds the entries of the interpolation matrix. 

\begin{lem}\label{lemma_with_Cs}
Suppose the set of points $\X=\{x_j\}_{j=1}^M$ satisfies  the separation condition \eqref{separation_DC_section}.  Then the entries of the interpolation matrix $K$ are bounded in the following way 
\begin{equation*}
    \begin{split}
        \|I-J_{0 0}\|&\le \frac{C_0}{\nu^4}, \quad  \quad  \quad \|-J''_{N}(0)I -J_{ii}\|_{\infty}\le \frac{C_2(n+1)^2}{\nu^4},\\
        \|J_{0 i}\|_{\infty}, \;  \|J_{i 0}\|_{\infty}&\le \frac{C_1 (n+1)}{\nu},  \quad \|J_{ i j}\|_{\infty}\le \frac{C_2(n+1)^2}{\nu^4}, \quad \text{for } i\ne j, \; i, j \ne 0,
    \end{split}
\end{equation*}
where the constants are defined as
\begin{equation}\label{C_constants}
    \begin{split}
        C_{0}=25 \cdot \zeta(3) \cdot \pi^4,\quad  C_{1}= 75 \cdot \zeta(3) \cdot \pi^4, \quad C_2= 412.5 \zeta(3) \cdot \pi^4. 
    \end{split}
\end{equation}
If $n\ge 9$ and $\nu^4 > \frac{3}{0.99}\cdot C_2$, then for the inverse of $J_{0 0}$ and $J_{ii}$ the following bounds are valid 
\begin{equation*}
    \|J_{0 0}^{-1}\|_{\infty} \le \frac{1}{1-\frac{C_0}{\nu^4}}, \quad \quad \|J_{ii}^{-1}\|_{\infty} \le \frac{3}{0.99 (n+1)^2 (1-\frac{3C_2}{0.99 \nu^4})}. 
\end{equation*}
\end{lem}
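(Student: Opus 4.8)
The plan is to establish the norm bounds on the blocks of $K$ by a direct application of the point-wise derivative estimates from Theorems~\ref{deriv_bound_general} and~\ref{deriv_bound_spheric}, fed through the summation estimates of Lemma~\ref{summation_lemma_for_Jackson}. Concretely, for the diagonal block $J_{00}$, the $(m,m)$ entry is $J_N(x_m,x_m)=1$, so $I-J_{00}$ has zero diagonal; its $m$-th absolute row sum is $\sum_{x_k\in\X\setminus\{x_m\}}|J_N(x_m,x_k)|$, which Lemma~\ref{summation_lemma_for_Jackson} (first estimate, taken with $\varepsilon=0$, i.e. $x=x_m$) bounds by $\pi^4\tilde a_0/\nu^4$. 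Since $\tilde a_0=\zeta(3)\min\{9+25,25\}=25\zeta(3)$, this gives exactly $C_0/\nu^4$ with $C_0=25\zeta(3)\pi^4$. The same bookkeeping applied to the off-diagonal blocks gives the other three bounds: for $J_{0i}$ one uses the second estimate of Lemma~\ref{summation_lemma_for_Jackson}, $\sum|X_n^yJ_N(x_m,x_k)|\le 3\pi^4\tilde a_0(n+1)/\nu^4$; here one must also account for the $k=m$ term, which by Theorem~\ref{deriv_bound_general} satisfies $X_n^yJ_N(x_m,x_m)=0$, so nothing extra is added, and since $\nu\le\nu^4$ we may relax $1/\nu^4$ to $1/\nu$ to obtain $C_1(n+1)/\nu$ with $C_1=3\cdot 25\zeta(3)\pi^4=75\zeta(3)\pi^4$. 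For $J_{ij}$ with $i\ne j$, the diagonal term vanishes (Theorem~\ref{deriv_bound_general}, case $x=y$, $i\ne n$), and the tail sum is controlled by the $\max$ of the relevant second-derivative summation bounds in Lemma~\ref{summation_lemma_for_Jackson}: comparing $16.5$, $14.5$, the worst constant over the contributing pieces gives $412.5\zeta(3)\pi^4$ after absorbing the various cross terms, hence $C_2(n+1)^2/\nu^4$. For $J_{ii}$, the diagonal entry is $-\widetilde J_N''(0)$ by Theorem~\ref{deriv_bound_general}, so $-\widetilde J_N''(0)I-J_{ii}$ again has zero diagonal, and the absolute row sum of the off-diagonal part is bounded by $16.5\pi^4\tilde a_0(n+1)^2/\nu^4$, which is $\le C_2(n+1)^2/\nu^4$.

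Having the block bounds, the second half of the lemma is a Neumann-series argument. For $J_{00}$: since $\|I-J_{00}\|_\infty\le C_0/\nu^4<1$ (this is where $\nu^4>C_0$ is implicitly needed — it follows from $\nu^4>\frac{3}{0.99}C_2$ since $\frac{3}{0.99}C_2\gg C_0$), the standard fact quoted in the text ($A$ invertible with $\|A^{-1}\|_\infty\le(1-\|I-A\|_\infty)^{-1}$ whenever $\|I-A\|_\infty<1$) gives $\|J_{00}^{-1}\|_\infty\le(1-C_0/\nu^4)^{-1}$. For $J_{ii}$ one first normalizes: write $J_{ii}=-\widetilde J_N''(0)\big(I-(-\widetilde J_N''(0))^{-1}(-\widetilde J_N''(0)I-J_{ii})\big)$. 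Using $\widetilde J_N''(0)=-n(n+2)/3$ from Lemma~\ref{lemma_1}, one has $|\widetilde J_N''(0)|=n(n+2)/3\ge 0.99(n+1)^2/3$ precisely when $n\ge 9$ (check: $n(n+2)/(n+1)^2=1-1/(n+1)^2\ge 0.99 \iff (n+1)^2\ge 100 \iff n\ge 9$). Therefore the perturbation term has $\infty$-norm at most $\frac{3}{0.99(n+1)^2}\cdot\frac{C_2(n+1)^2}{\nu^4}=\frac{3C_2}{0.99\nu^4}<1$ by the hypothesis $\nu^4>\frac{3}{0.99}C_2$, and the Neumann bound yields $\|J_{ii}^{-1}\|_\infty\le\frac{1}{|\widetilde J_N''(0)|}\cdot\frac{1}{1-\frac{3C_2}{0.99\nu^4}}\le\frac{3}{0.99(n+1)^2(1-\frac{3C_2}{0.99\nu^4})}$, as claimed.

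The main obstacle is not any single step but the careful accounting of constants in the bound on $J_{ij}$ (and the $J_{ii}$ off-diagonal block): one has to decide exactly which of the several Hessian/derivative summation estimates in Lemma~\ref{summation_lemma_for_Jackson} govern each entry of $K$ — some entries of $J_{ij}$ are realized through $X_i^xX_j^yJ_N$ estimated in normal coordinates (constant $16.5$), while in the dual-certificate verification later one also needs the polar-coordinate Hessian entries (constants $14.5$, etc.) — and then take the worst case to arrive at the single clean constant $C_2=412.5\zeta(3)\pi^4$. I would organize this by stating explicitly, entry by entry, which point-wise bound from Theorem~\ref{deriv_bound_general} applies, confirming in each case that the $k=m$ (i.e. $x_k=x_m$) contribution either vanishes or is the stated diagonal value, and then invoking Lemma~\ref{summation_lemma_for_Jackson} verbatim for the tail. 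The only genuinely quantitative checks are the two elementary inequalities $n(n+2)/(n+1)^2\ge 0.99$ for $n\ge 9$ and $\tilde a_0=25\zeta(3)$, both of which are immediate.
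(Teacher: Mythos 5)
Your proposal is correct and takes essentially the same route as the paper's (very terse) proof: combine the pointwise bounds of Theorem~\ref{deriv_bound_general} with the tail-sum estimates of Lemma~\ref{summation_lemma_for_Jackson} (noting $\tilde a_0 = 25\zeta(3)$), observe that the $x_k=x_m$ contribution is either $1$, $-\widetilde J_N''(0)$, or $0$ so that $I-J_{00}$ and $-\widetilde J_N''(0)I-J_{ii}$ have vanishing diagonal, and finish with the Neumann-series bound together with $n(n+2)/(n+1)^2\ge 0.99$ for $n\ge 9$. Two small remarks: your verification of the last inequality correctly uses $\widetilde J_N''(0)=-n(n+2)/3$ (the paper's proof has a typo writing $n(n+1)$), and your remark about ``comparing 16.5, 14.5 \ldots after absorbing cross terms'' is unnecessary — the constant is simply $16.5\cdot 25\zeta(3)\pi^4 = 412.5\zeta(3)\pi^4$, since only the normal-coordinate bounds from Theorem~\ref{deriv_bound_general}, not the polar Hessian bounds of Theorem~\ref{deriv_bound_spheric}, enter the blocks of $K$. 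Your patch of ``relax $1/\nu^4$ to $1/\nu$'' for the $J_{0i}$ bound is the right instinct — the natural bound one obtains is $C_1(n+1)/\nu^4$, and the $\nu$ in the lemma's denominator is most plausibly a typo for $\nu^4$; your reading is harmless since the applications always have $\nu>1$.
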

\begin{proof}
The bounds follows directly from Theorem~\ref{deriv_bound_general} and Lemma~\ref{lemma_1}, as well as from the invertibility of a matrix $A$, if
\begin{equation*}
    \|I-A\|_{\infty}< 1,
\end{equation*}
and the bounds on the norm of the inverse given by
\begin{equation*}
        \|A^{-1}\|_{\infty}\le \frac{1}{1- \|I-A\|_{\infty}}.
\end{equation*}
For the bound on $J_{ii}^{-1}$, one uses additionally that for $n\ge 9$, so that $${|J''_{N}(0)|=\frac{n(n+1)}{3}\ge \frac{0.99 \cdot (n+1)^2}{3}}$$. 
\end{proof}

Relying on the above stated results, we move on to the main Theorem of this section, which provides a condition on the separation of the interpolation points to guarantee the invertibility of the interpolation matrix $K$ and gives bounds on the coefficients~$\alpha$. 

\begin{thm}\label{coeff_bounds_th}
Suppose the separation condition \eqref{separation_DC_section} is satisfied by the set $\X=\{x_j\}_{j=1}^M$ for some $\nu\ge 0$ and a constant $b\ge 3$ such that 
\begin{equation*}
    \nu^4\ge \frac{3}{0.99} \cdot b\cdot C_2, 
\end{equation*}
where the constant $C_2$ is given in \eqref{C_constants}. Then the matrix $K$ is invertible and the entries of the coefficient vector $\al$ in~\eqref{interpol_system} satisfy 
\begin{equation*}
    \begin{split}
        \|\alpha_{0}\|_{\infty}&\le 1 + \frac{1}{45(b-2)-1}, \quad  \quad   \quad 
        \|\alpha_j\|_{\infty}\le \frac{(n+1)^{-1}}{4.5(b-2)-0.1}, \quad j= 1,2.
    \end{split}
\end{equation*}
Furthermore, for elements of the vector $\alpha_{0}$ we have the lower bound
\begin{equation*}
    |\alpha_{0,i}|\ge 1 -\frac{1}{45(b-2)-1}.
\end{equation*}
\end{thm}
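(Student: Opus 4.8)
The strategy is a standard block-elimination / Neumann-series argument applied to the $3\times3$ block matrix $K$ in \eqref{interpol_system}, exactly in the spirit of \cite{Candes2014, FrankKrisrof2016}. First I would use the diagonal-dominance estimates from Lemma~\ref{lemma_with_Cs}: the block $J_{00}$ is close to the identity, the blocks $J_{ii}$ ($i=1,2$) are close to $-\widetilde{J}_N''(0)\,I$ (which is of size $\sim (n+1)^2/3$), and all the off-diagonal blocks are small relative to the corresponding diagonal ones once $\nu^4\gtrsim b\,C_2$. Rescale the system so that the $\alpha_1,\alpha_2$ variables are measured in units of $(n+1)^{-1}$; this puts $K$ in a genuinely diagonally dominant form with respect to $\|\cdot\|_\infty$. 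Then I would perform \emph{one step of block Gaussian elimination}, eliminating $\alpha_1$ and $\alpha_2$ using the (invertible, by Lemma~\ref{lemma_with_Cs}) blocks $J_{ii}$, to obtain a Schur complement $S = J_{00} - \sum_{i=1,2} J_{0i} J_{ii}^{-1} J_{i0}$ acting on $\alpha_0$ with right-hand side $u$ (the right-hand side stays $u$ since the $\alpha_1,\alpha_2$ equations have zero right side).

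\textbf{Key steps.} (1) Bound the correction term: using $\|J_{0i}\|_\infty,\|J_{i0}\|_\infty\le C_1(n+1)/\nu$, wait --- here one must be careful, the stated bound $C_1(n+1)/\nu$ in Lemma~\ref{lemma_with_Cs} looks like it should be $C_1(n+1)/\nu^4$ for the telescoping to close, so I would re-derive $\|J_{0i}\|_\infty \le C_1(n+1)/\nu^4$ directly from Lemma~\ref{summation_lemma_for_Jackson} (the $X_n^y J_N$ summation bound is $3\pi^4\tilde a_\varepsilon(n+1)/\nu^4$, giving $C_1 = 3\tilde a_\varepsilon \pi^4 \le 75\zeta(3)\pi^4$). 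Combining with $\|J_{ii}^{-1}\|_\infty \le \frac{3}{0.99(n+1)^2(1-3C_2/(0.99\nu^4))}$ gives
\[
\Big\|\sum_{i=1,2} J_{0i} J_{ii}^{-1} J_{i0}\Big\|_\infty \le 2\cdot \frac{C_1(n+1)}{\nu^4}\cdot \frac{3}{0.99(n+1)^2(1-3C_2/(0.99\nu^4))}\cdot \frac{C_1(n+1)}{\nu^4},
\]
and since $C_1^2$ and $C_2$ are comparable constants, under $\nu^4\ge \frac{3}{0.99}bC_2$ this whole quantity is $\le$ a small multiple of $1/(b-\text{const})$. (2) Then $\|I - S\|_\infty \le \|I-J_{00}\|_\infty + \|{\sum} J_{0i}J_{ii}^{-1}J_{i0}\|_\infty < 1$, so $S$ is invertible with $\|S^{-1}\|_\infty \le (1-\|I-S\|_\infty)^{-1}$, hence $K$ is invertible and $\alpha_0 = S^{-1} u$, giving $\|\alpha_0\|_\infty \le (1-\|I-S\|_\infty)^{-1}\|u\|_\infty = (1-\|I-S\|_\infty)^{-1}$. (3) Back-substitute: $\alpha_i = -J_{ii}^{-1}\big(J_{i0}\alpha_0 + \sum_{k\ne i, k\ne 0} J_{ik}\alpha_k\big)$ for $i=1,2$; treat this as a small fixed-point / $2\times2$ linear solve in $(\alpha_1,\alpha_2)$, bound it using $\|J_{ii}^{-1}\|_\infty$, $\|J_{i0}\|_\infty$, $\|J_{ij}\|_\infty\le C_2(n+1)^2/\nu^4$ and $\|\alpha_0\|_\infty\le 2$, and collect the $(n+1)^{-1}$ factor. (4) For the lower bound on $|\alpha_{0,i}|$: write $\alpha_0 = u - (I-S)S^{-1}u = u + r$ with $\|r\|_\infty = \|(I-S)\alpha_0\|_\infty \le \|I-S\|_\infty\,\|\alpha_0\|_\infty$; since $|u_i|=1$ this gives $|\alpha_{0,i}| \ge 1 - \|I-S\|_\infty\|\alpha_0\|_\infty \ge 1 - \frac{\|I-S\|_\infty}{1-\|I-S\|_\infty}$, and one checks this equals $1 - \frac{1}{45(b-2)-1}$ after plugging in the explicit constants.

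\textbf{Arithmetic to pin down the constants.} The only real work is bookkeeping: one must verify that with $C_2 = 412.5\,\zeta(3)\pi^4$, $C_1 = 75\,\zeta(3)\pi^4$, $C_0 = 25\,\zeta(3)\pi^4$, the hypothesis $\nu^4 \ge \frac{3}{0.99}bC_2$ forces $\|I-S\|_\infty \le \frac{1}{45(b-2)}$ (so that $(1-\|I-S\|_\infty)^{-1} \le 1 + \frac{1}{45(b-2)-1}$), and similarly that the back-substitution bound for $\alpha_1,\alpha_2$ comes out to exactly $\frac{(n+1)^{-1}}{4.5(b-2)-0.1}$. Here the shift $b\mapsto b-2$ presumably accounts for absorbing the two correction terms (the $\|I-J_{00}\|_\infty$ piece and the Schur-complement piece) into the available slack in $\nu^4$; I would track how much of the factor $b$ each term consumes. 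I also need $n\ge 9$ (so that $|\widetilde{J}_N''(0)| = \frac{n(n+2)}{3}\ge \frac{0.99(n+1)^2}{3}$, as in Lemma~\ref{lemma_with_Cs}) for the $J_{ii}^{-1}$ bound to hold in the clean form used above.

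\textbf{Main obstacle.} The conceptual content is routine; the hard part is purely the constant-chasing — making sure every inequality is tight enough that the final bounds have precisely the stated numerical form $1 + \frac{1}{45(b-2)-1}$, $\frac{(n+1)^{-1}}{4.5(b-2)-0.1}$, and $1 - \frac{1}{45(b-2)-1}$, rather than merely something of the same shape with worse constants. In particular one must be vigilant about (a) whether the off-diagonal-block bound in Lemma~\ref{lemma_with_Cs} is $C_1(n+1)/\nu$ or $C_1(n+1)/\nu^4$ (the latter is what makes the Schur complement converge, and is what Lemma~\ref{summation_lemma_for_Jackson} actually yields), and (b) correctly propagating the $(n+1)$-powers through the rescaled system so that $\alpha_1,\alpha_2$ end up with the single factor $(n+1)^{-1}$.
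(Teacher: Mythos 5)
Your proposal takes essentially the same route as the paper: iterative block (Schur-complement) inversion of $K$ using the diagonal-dominance estimates of Lemma~\ref{lemma_with_Cs} and Neumann-series bounds for the inverses, followed by back-substitution for $\alpha_1,\alpha_2$ and the $u-(I-S)S^{-1}u$ trick for the lower bound on $|\alpha_{0,i}|$; you are also correct that the $\nu$ in Lemma~\ref{lemma_with_Cs}'s bound on $\|J_{0i}\|_\infty$ should read $\nu^4$, consistent with Lemma~\ref{summation_lemma_for_Jackson}. The only slip is in your step (2): because $K_2$ has nontrivial off-diagonal blocks $J_{12},J_{21}$, the Schur complement acting on $\alpha_0$ is $K/K_2 = J_{00}-\widetilde{K}_1 K_2^{-1} K_1$, not $J_{00}-\sum_{i=1,2}J_{0i}J_{ii}^{-1}J_{i0}$; the paper fixes this by first inverting $K_2$ via its own inner Schur complement $T=K_2/J_{22}$ (with $\|T^{-1}\|_\infty\le \tfrac{3(b-1)}{0.99(n+1)^2(b-2)}$), which is precisely the coupled $2\times2$ solve you gesture at in step (3), and then bounding $\|I-K/K_2\|_\infty\le\tfrac{1}{45(b-2)}$.
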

\begin{proof} In the first place, we partition the matrix $K$ into blocks as 
\begin{equation*}
   K= \begin{pmatrix} K_0 & \widetilde{K}_1\\
   K_1 & K_2
   \end{pmatrix}
\end{equation*}
with the blocks given by 
\begin{equation*}
    \begin{split}
        K_{0}&=J_{00}=J_N, \\
       K_1&= \left[ J_{01} \; J_{02}\right]^{\trans}= \left[ X_1^y J_N \; X_2^y J_N \right]^{\trans},\\
        \widetilde{K}_1&= \left[ J_{10} \; J_{20}\right]^{\trans}= \left[ X_1^x J_N \; X_2^x J_N \right]^{\trans},\\
        K_2&=  \begin{bmatrix} J_{11} & J_{21}\\
        J_{12}& J_{22}\end{bmatrix}= \begin{bmatrix} X_1^x X_1^y J_N & X_2^x X_1^y J_N \\
        X_1^x X_2^y J_N & X_2^x X_2^y J_N \end{bmatrix}, 
    \end{split}
\end{equation*}
after that we use an iterative block inversion. For abbreviation, we set
\begin{equation*}
   a_1:= \frac{3\cdot C_2}{0.99 \nu^4}
\end{equation*}
that represents the quotient of the off-diagonal upper bound and the on-diagonal lower bound. 
Thus, the assumption of the theorem reads  now as 
\begin{equation*}
    a_1\le \frac{1}{b}, \quad \quad \text{where} \quad  b\ge 3.
\end{equation*}
Using the bounds from Lemma~\ref{lemma_with_Cs}, one can  shown that 
\begin{equation*}
    \left\|I - \frac{K_2/J_{22}}{\widetilde{J}''_N(0)}\right\|_{\infty}\le \frac{1}{b-1}. 
\end{equation*}
Due to this, the invertibility of the matrix $K_2$ follows easily from
\begin{equation*}
    \|(K_{2}/J_{22})^{-1}\|_{\infty}\le \frac{3(b-1)}{0.99 (n+1)^2(b-2)}.
\end{equation*}
With the abbreviation $T=K_2/J_{2 2}$, the inverse matrix of $K_2$ has the representation 
\begin{equation}\label{K_2_-1}
    K_2^{-1}= \begin{pmatrix} T^{-1} & -T^{-1}J_{21} (J_{22})^{-1}\\
    -(J_{22})^{-1} J_{12}T^{-1} & (J_{22})^{-1}+(J_{22})^{-1} J_{12}T^{-1} J_{21} (J_{22})^{-1}. 
    \end{pmatrix}
\end{equation}
In the next step, again relying on Lemma~\ref{lemma_with_Cs} and on  the bound for $(K_{2}/J_{22})^{-1}$,  it can be shown that 
\begin{equation*}
    \|I-K/K_2\|_{\infty}\le \frac{1}{45(b-2)},
\end{equation*}
which in turn results in 
\begin{equation*}
    \|(K/ K_2)^{-1}\|\le \frac{1}{1-\frac{1}{45(b-2)}}
\end{equation*}
and in the invertibility of $K$. Now, using the representation \eqref{K_2_-1} of the inverse of $K_2$ and the abbreviation $S=K/K_2$, one comes to  the solution of the interpolation problem is given by 
\begin{equation*}
    \begin{split}
        \alpha_{0}&= S^{-1}u\\
        \alpha_1 &= -T^{-1}(J_{01}-J_{21}(J_{22})^{-1} J_{02})\alpha_0\\
        \alpha_2&= - J_{22}^{-1}(J_{12}\alpha_{1}+J_{02}\alpha_0). 
    \end{split}
\end{equation*}
This yields the bounds 
\begin{equation*}
        \|\alpha_0\|_{\infty} \le 1+\frac{1}{45(b-2)-1},\quad  \quad 
        \|\alpha_1\|_{\infty} , \, \|\alpha_2\|_{\infty}\le \frac{(n+1)^{-1}}{4.5(b-2)-0.1}. 
\end{equation*}
Moreover, the absolute value of $\alpha_{0,i}$ has the lower bound 
\begin{equation*}
    \begin{split}
        |\alpha_{0,i}|&\ge \left|  \left( \left(I- \left(I-(K/K_2)^{-1}\right)\right)u\right)_i\right|\\
       &\ge  \left|  1- \left|\left( \left(I-(K/K_2)^{-1}\right)u\right)_i\right| \right|\\
        &\ge\left| 1- \left|\left( I- K/K_2\right)(K/K_2)^{-1} \right|\right|. 
    \end{split}
\end{equation*}
Since $b\ge 3$, the subtrahend in the last inequality can be bounded as
\begin{equation*}
    \left|(I-K/K_2)(K/K_2)^{-1}\right|\le\|I- K/K_2\|_{\infty}\|(K/K_2)^{-1}\|_{\infty}, 
\end{equation*}
which results in the corresponding bound for  $\alpha_{0,i}$, namely 
\begin{equation*}
    |\alpha_{0,i}|\ge 1 -\frac{1}{45(b-2)-1}. 
\end{equation*}
\end{proof}

Looking for explicit bounds for the coefficients $\al$, we obtain the following result. 

\begin{cor} \label{solution_coeff_bounds_cor}
Suppose the set of interpolation points $\X= \{x_i\}_{m=0}^M$ satisfies a separation distance of 
\begin{equation}
    \rho(\X)\ge \frac{19.2\pi}{N}
\end{equation}
for $N\ge 20$. Then the interpolation problem \eqref{interpol_system} has a unique solution, such that
\begin{equation*}
            \|\alpha_{0}\|_{\infty}\le 1+ 6.3 \cdot 10^{-3}, \quad  \quad   
             \|\alpha_{1}\|_{\infty}, \, \|\alpha_{2}\|_{\infty}\le \frac{6.3 \cdot 10^{-2}}{(n+1)}, 
 \end{equation*}
 and for elements of the vector $\al_0$ we have the lower bound
 \begin{equation*}
     |\alpha_{0,i}|\ge 1- 6.3 \cdot 10^{-3}. 
 \end{equation*}
 \end{cor}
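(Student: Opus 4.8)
The plan is to derive Corollary \ref{solution_coeff_bounds_cor} as a direct specialization of Theorem \ref{coeff_bounds_th}, so the task reduces to a bookkeeping exercise: choose $\nu$ and $b$ so that the hypotheses of the theorem are met, and then evaluate the resulting bounds numerically. First I would translate the separation hypothesis $\rho(\X)\ge \tfrac{19.2\pi}{N}$ into the normalized form \eqref{separation_DC_section}. Since $n=\lrf{N/2}$, we have $n+1\ge N/2$, hence $\tfrac{19.2\pi}{N}=\tfrac{9.6\pi}{N/2}\ge \tfrac{9.6\pi}{n+1}$, so the hypothesis \eqref{separation_DC_section} holds with $\nu = 9.6\pi$ (one must also check $n+1\le N/2+1$ does not spoil the inequality; for $N\ge 20$ this is comfortable, and I would record the exact inequality $n+1 \le (N+1)/2$ to be safe, giving $\nu$ slightly larger than $9.6\pi$ if needed, but $9.6\pi$ suffices since $N$ even or odd both work out).

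Next I would verify the quantitative condition $\nu^4 \ge \tfrac{3}{0.99}\, b\, C_2$ with $C_2 = 412.5\,\zeta(3)\,\pi^4$ and solve for the largest admissible $b$. With $\nu^4 = (9.6\pi)^4 = 9.6^4\,\pi^4$, the factor $\pi^4$ cancels, leaving $b \le \tfrac{0.99\cdot 9.6^4}{3\cdot 412.5\cdot \zeta(3)}$. Plugging in $\zeta(3)\approx 1.2020569$ and $9.6^4 = 8493.4656$, this gives roughly $b \le \tfrac{0.99\cdot 8493.47}{1487.8} \approx 5.65$, so I would take $b = 5$ (a safe integer value $\ge 3$, comfortably satisfying the constraint). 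I also need $n\ge 9$, which follows from $N\ge 20$ since then $n=\lrf{N/2}\ge 10$. With $b=5$, the theorem's bounds become $\|\alpha_0\|_\infty \le 1 + \tfrac{1}{45\cdot 3 - 1} = 1 + \tfrac{1}{134}$ and $\|\alpha_j\|_\infty \le \tfrac{(n+1)^{-1}}{4.5\cdot 3 - 0.1} = \tfrac{(n+1)^{-1}}{13.4}$, together with $|\alpha_{0,i}|\ge 1 - \tfrac{1}{134}$. Since $\tfrac{1}{134} \approx 7.46\cdot 10^{-3}$ and $\tfrac{1}{13.4}\approx 7.46\cdot 10^{-2}$, I would note these are \emph{slightly} worse than the stated $6.3\cdot 10^{-3}$ and $6.3\cdot 10^{-2}$, so in fact one should push $b$ up to its true maximum ($b\approx 5.65$): with $b = 5.65$, $45(b-2) - 1 = 45\cdot 3.65 - 1 = 163.25$, giving $1/163.25 \approx 6.13\cdot 10^{-3}$, and $4.5(b-2) - 0.1 = 4.5\cdot 3.65 - 0.1 = 16.325$, giving $1/16.325 \approx 6.13\cdot 10^{-2}$ — both within the claimed $6.3\cdot 10^{-\ast}$. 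So the final step is simply: apply Theorem \ref{coeff_bounds_th} with $\nu = 9.6\pi$ and $b$ equal to this maximal value, check $b\ge 3$ and $n\ge 9$, and read off the bounds, rounding up to the stated figures.

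The main obstacle, such as it is, is arithmetic care rather than conceptual difficulty: one must be precise about the relationship between $n+1$ and $N$ (the factor-of-two conversion from degree $N$ to the Jackson parameter $n = \lrf{N/2}$, and whether it is $N/2$, $(N+1)/2$, or $(N-1)/2$ depending on parity), and one must carry enough decimal places in $\zeta(3)$ and in $9.6^4$ so that the claimed numerical bounds $6.3\cdot 10^{-3}$ and $6.3\cdot 10^{-2}$ genuinely hold with the chosen $b$. I would double-check the corner case $N=20$ (where $n=10$, $n+1=11$) to confirm both $n\ge 9$ and the separation conversion, and I would verify that the constant $19.2\pi$ is exactly $2\cdot 9.6\pi$ so that $\rho(\X)\ge \tfrac{19.2\pi}{N}\ge \tfrac{9.6\pi}{n+1}$ holds for all $N\ge 20$, not just asymptotically. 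Once these elementary checks pass, the corollary is immediate.
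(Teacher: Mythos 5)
Your proposal is correct and follows essentially the same route as the paper: convert the separation bound to $\nu=9.6\pi$ via $n+1\ge N/2$, verify $\nu^4\ge\frac{3}{0.99}\,b\,C_2$ for a suitable $b$ (the paper takes $b=5.6$, giving $45(b-2)-1=161$ and $4.5(b-2)-0.1=16.1$, i.e.\ $1/161\approx 6.2\cdot10^{-3}$ and $1/16.1\approx 6.2\cdot10^{-2}$, while you push $b$ to its maximum $\approx 5.65$), and read off the bounds from Theorem~\ref{coeff_bounds_th}. Your observation that $b=5$ would not suffice and that one must take $b$ close to $5.6$--$5.65$ is exactly the arithmetic point on which the corollary's constants hinge.
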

 \begin{proof}
 For $n= \lrf{N/2}$, we have $(n+1)\ge N/2$ and therefore 
 \begin{equation*}
     \rho(\X)\ge \frac{9.6 \pi }{(n+1)}. 
 \end{equation*}
It is easy to check that with $\nu = 9.6 \pi $ and $b=5.6$, we have 
 \begin{equation*}
     \nu^4\ge \frac{3}{0.99}\cdot b\cdot C_2.
 \end{equation*}
Thus,  it follows  from Theorem~\ref{coeff_bounds_th} that 
 \begin{equation*}
        \begin{split}
            \|\alpha_{0}\|_{\infty}&\le 1 + \frac{1}{45\cdot 5.6 -1}\le 1+ 6.3 \cdot 10^{-3}\\
             \|\alpha_{j}\|_{\infty}&\le \frac{(n+1)^{-1}}{4.5\cdot 5.6 -0.1}\le \frac{6.3 \cdot 10^{-2}}{(n+1)}, \quad j= 1,2, 
        \end{split}
 \end{equation*}
 and the lower bound for elements of $\alpha_{0}$ fulfills
 \begin{equation*}
     |\alpha_{0,i}|\ge 1 - \frac{1}{45\cdot 5.6-1}\ge 1- 6.3 \cdot 10^{-3}. 
 \end{equation*}. 
 \end{proof}
 
 \subsection{Bounds for the Interpolant}
 Using the derived bounds on the coefficients of the interpolant, we proceed by showing the upper bound in the absolute value of the interpolating function $q$ with coefficients of Corollary~\ref{solution_coeff_bounds_cor}, i.e. 
 \begin{equation*}
     |q(x)|\le 1,
 \end{equation*}
where $x$ is not an interpolation point. We split the proof into two parts. In the first place, we consider the points that are close to an interpolation point. This case in covered by  Lemma~\ref{q_less_1_for_close}, where we rely on  the convexity argument via the definiteness of Hessian matrix. Thereafter,  we set our sights on the bounds for points that are well separated from any interpolation points, which forms the content of Lemma~\ref{q_less_1_for_not_close}. 

\begin{lem}\label{q_less_1_for_close}
Suppose the set of interpolation points $\X= \{x_i\}_{m=0}^M$ satisfies the separation condition \eqref{separation_DC_section} and $x\in \St$ is such that $0<d(x, x_i)\le \frac{\pi}{6(n+1)}$, for an interpolation point $x_m\in \X$. Then the interpolating function $q$ from Corollary~\ref{solution_coeff_bounds_cor} fulfills 
\begin{equation*}
    |q(x)|\le 1. 
\end{equation*}
\end{lem}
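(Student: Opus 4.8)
The plan is to show that $q$ has a strict local maximum in absolute value at the interpolation point $x_m$, and then use a second-order Taylor expansion to conclude that $|q|\le 1$ on the small geodesic ball $d(x,x_m)\le \tfrac{\pi}{6(n+1)}$. Without loss of generality we may assume $u_m=1$ (otherwise replace $q$ by $-q$ and the sign sequence by its negative, which only changes the sign of every $\alpha_{0,i}$ and leaves the argument unchanged). By the Hermite interpolation conditions built into \eqref{interpol_system}, we have $q(x_m)=1$ and $X_1 q(x_m)=X_2 q(x_m)=0$, so the first-order term vanishes and it suffices to control the Hessian of $q$ near $x_m$. Concretely, I would work in polar coordinates centered at $z=x_m$, so that all the Hessian estimates of Theorem~\ref{deriv_bound_spheric} (including the ``$y=z$'' case) apply directly.

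\textbf{Step 1: reduce to negative definiteness of the Hessian.} Parametrize the geodesic from $x_m$ to $x$ and write $g(t)=q(\gamma(t))$ for the restriction of $q$ to that geodesic; since $q$ is real-valued with $g(0)=1$ and $g'(0)=0$, Taylor's theorem gives $q(x)=1+\tfrac12 g''(\tau) d(x,x_m)^2$ for some intermediate point, and $g''$ is a quadratic form in the (unit) geodesic direction governed by the entries $(Hq(\xi))_{ij}$ in polar coordinates centered at $x_m$. Hence it is enough to show $(Hq(\xi))$ is negative semi-definite — in fact strictly negative definite — for every $\xi$ with $d(\xi,x_m)\le \tfrac{\pi}{6(n+1)}$; this immediately yields $|q(x)|\le 1$ on the ball. (One should also remark that $|q(x)|\le 1$ rather than just $q(x)\le 1$ follows because $q(x_m)=1$ is then a local maximum and $q$ cannot dip below $-1$ on such a tiny ball given the Hessian bound — alternatively, note the lower Taylor bound $q(x)\ge 1-\tfrac12\|Hq\|\,d(x,x_m)^2$ stays well above $-1$.)

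\textbf{Step 2: estimate the Hessian of $q$ at a near point $\xi$.} Split $Hq(\xi)$ into the contribution of the ``central'' atom $x_m$ and the contribution of all the other atoms. For the central atom I use the ``$y=z$'' estimates of Theorem~\ref{deriv_bound_spheric}: the diagonal entries of $H\big(\alpha_{0,m}J_N(\cdot,x_m)\big)$ are within $\tfrac{3}{20}\delta^2(n+1)^2$ of $\alpha_{0,m}\widetilde J''_N(0)$ with $\delta=\tfrac{\pi}{6}$, the off-diagonal within $\tfrac{3}{20}\delta^2(n+1)^2$, and the $H\big(\alpha_{k,m}X_k^y J_N(\cdot,x_m)\big)$ terms are bounded by $\tfrac{3}{10}\delta(n+1)^3\cdot\|\alpha_{k,m}\|_\infty$, which is $O((n+1)^2)$ since $\|\alpha_k\|_\infty\le \tfrac{6.3\cdot10^{-2}}{n+1}$ by Corollary~\ref{solution_coeff_bounds_cor}. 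Since $\widetilde J''_N(0)=-\tfrac{n(n+2)}{3}<0$ and $\alpha_{0,m}\ge 1-6.3\cdot10^{-3}$, the central block is a strongly negative-definite matrix of size $\asymp \tfrac13(n+1)^2$. For the remaining atoms I use the far-field bounds from Theorem~\ref{deriv_bound_general}/Theorem~\ref{deriv_bound_spheric} summed via Lemma~\ref{summation_lemma_for_Jackson}: each of $\sum_{k\ne m}|(HJ_N(\xi,x_k))_{ij}|$, $\sum_{k\ne m}|(HX_k^yJ_N(\xi,x_k))_{ij}|$ is at most a constant times $\tfrac{(n+1)^2}{\nu^4}$ or $\tfrac{(n+1)^3}{\nu^4}$ respectively, and after multiplying by the coefficient bounds this contributes only $O\!\big(\tfrac{(n+1)^2}{\nu^4}\big)$, negligible compared to the central $\tfrac13(n+1)^2$ for $\nu=9.6\pi$.

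\textbf{Step 3: assemble the numerics.} Collecting Steps 1–2, write $Hq(\xi)=\alpha_{0,m}\widetilde J''_N(0)\,I + E$ where $\|E\|_\infty$ is bounded by the sum of (i) the three $y=z$ perturbation terms with $\delta=\pi/6$, (ii) the $X_k^y$-terms times $\tfrac{6.3\cdot10^{-2}}{n+1}$, and (iii) the tail sums times the coefficient bounds. Plugging $\nu=9.6\pi$, $b=5.6$ and $n\ge \lfloor 20/2\rfloor=10$, one checks $\|E\|_\infty < |\alpha_{0,m}|\,|\widetilde J''_N(0)|$, so $Hq(\xi)$ is strictly negative definite (its eigenvalues lie in an interval strictly left of $0$). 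Feeding this back into the Taylor expansion of Step 1 gives $q(x)\le 1$, and the same bound shows $q(x)$ stays far above $-1$, hence $|q(x)|\le1$ for $0<d(x,x_m)\le\tfrac{\pi}{6(n+1)}$.

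\textbf{The main obstacle} is Step 3: making the constants actually close. The central term scales like $\tfrac13(n+1)^2$, while the perturbations from the $X_k^y$-pieces scale like $(n+1)^3\times\tfrac{1}{n+1}=(n+1)^2$ with a constant $\tfrac{3}{10}\cdot\tfrac{\pi}{6}\cdot 6.3\cdot10^{-2}$ per entry, and the $\delta^2(n+1)^2$ perturbation with $\delta=\pi/6$ is $\tfrac{3}{20}\cdot\tfrac{\pi^2}{36}(n+1)^2\approx 0.041(n+1)^2$; these are the same order as $\tfrac13(n+1)^2\approx 0.33(n+1)^2$, so the inequality $\|E\|_\infty<|\widetilde J''_N(0)|$ is genuinely tight and needs the $\|\cdot\|_\infty$-to-spectral passage, the factor $\alpha_{0,m}\ge1-6.3\cdot10^{-3}$, and possibly a slightly sharper handling of the off-diagonal $X_k^y$ contributions (using that a $2\times2$ matrix's eigenvalues are bounded by $\max_i\sum_j|a_{ij}|$) to push through with room to spare. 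The $\delta=\pi/6$ restriction in the hypothesis is precisely what keeps these perturbations under control; with $\delta=\pi/4$ the argument would fail.
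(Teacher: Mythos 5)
Your proposal follows the paper's proof essentially step for step: WLOG $u_m=1$, show the Hessian of $q$ in polar coordinates centered at $x_m$ is strictly negative definite on the ball $d(x,x_m)\le \pi/(6(n+1))$ by separating the central-atom contribution (the $y=z$ bounds of Theorem~\ref{deriv_bound_spheric}) from the tail (Lemma~\ref{summation_lemma_for_Jackson}), and deduce strict concavity hence $q(x)<1$; the paper then carries out the arithmetic you flag as the remaining obstacle, arriving at $(Hq(x))_{11}\le -0.1552(n+1)^2$, $(Hq(x))_{22}\le -0.1617(n+1)^2$, $|(Hq(x))_{ij}|\le 0.1538(n+1)^2$, so $\tr\le-0.3169(n+1)^2$ and $\det\ge 0.0004(n+1)^2$. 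The only small divergence is the lower bound $q(x)>-1$: you propose a Taylor bound via $\|Hq\|$, whereas the paper estimates $q(x)$ directly from $J_N(x,x_m)\ge 1-\tfrac{(n+1)^2}{6}d(x,x_m)^2$ and the kernel tail to obtain $q(x)\ge 0.92$ — both are valid, the paper's being more explicit and avoiding the need for an upper bound on $\|Hq\|$ in addition to the negativity bound.
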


\begin{proof}
Let us consider the interpolant of the form \eqref{DC}, namely 
\begin{equation*}
     q(x)= \sum\limits_{k=0}^{M}\alpha_{0,k}J_N(x, x_k)+ \alpha_{1,k}X_1^y J_N(x, x_k)+\alpha_{2,j}X_2^y J_N(x, x_k).
\end{equation*}
Without loss of generality, let us assume that for a vector $x_m\in \X$ we have ${q(x_m)=u_m=1}$. To show that $u_m=1$ is a local maximum of $q$,  one needs to show that the Hessian matrix of $q$ is  negative definite at $x_m$. 
Considering the normal coordinates at $x_m$, the Hessian matrix of $q$  at the interpolation point $x_m$ has the form
\begin{equation*}
    \widetilde{H}q(x_m)=
    \begin{pmatrix} 
    X_1 X_1 q(x_m) & X_1X_2 q(x_m)\\
    X_2 X_1 q(x_m)& X_2 X_2 q(x_m)
    \end{pmatrix}. 
\end{equation*}
Now, using the bounds of Theorem~\ref{deriv_bound_general}, Lemma~\ref{summation_lemma_for_Jackson} and Corollary~\ref{solution_coeff_bounds_cor}, we have 
\begin{equation}\label{late_night}
\begin{split}
    X_i^x X_i^x q(x_m) &= \sum\limits_{x_k\in \X }\alpha_{0,k}  X_i^x X_i^x J_N(x_m, x_k)+ \alpha_{1,k}  X_i^x X_i^x X_1^y J_N(x_m, x_k)\\
    &+X_i^x X_i^x \alpha_{2,k}X_2^y J_N(x_m, x_k)\\
    &= \alpha_{0,m}  X_i^x X_i^x J_N(x_m, x_m)+ \sum\limits_{x_k\in \X \setminus \{x_m\}}\Big(\alpha_{0,k}  X_i^x X_i^x J_N(x_m, x_k)\\
    &+ \alpha_{1,k}  X_i^x X_i^x X_1^y J_N(x_m, x_k)
    + \alpha_{2,k} X_i^x X_i^x X_2^y J_N(x_m, x_k)\Big)\\
    &\le -|\al_{0,m}|\cdot\frac{n(n+2)}{3}+ \Big( 16.5\cdot\nofty{\al_0}+ 202 \cdot\nofty{\al_1}\Big)\frac{ \tilde{a}_0 (n+1)^2}{(9.6)^4}.
    \end{split}
\end{equation}
Since $N\ge 20$, we have $n+1\ge 10$ and therefore
\begin{equation}\label{J_N_trick}
   |\widetilde{J}''_N(0)|= \frac{n(n+2)}{3}\ge  \frac{n(n+1)}{(n+1)^2}\cdot \frac{(n+1)^2}{3}\ge 0.99 \cdot \frac{(n+1)^2}{3}. 
\end{equation}
And together with \eqref{late_night} it provides the upper bound for the on-diagonal entries 
\begin{equation*}
   X_i X_i q(x_m)\le -0.2248 \cdot(n+1)^2. 
\end{equation*}
In the same way,  one can bound the off-diagonal entries by 
\begin{equation*}
   |X_i X_j q(x_m)|\le 0.1032\cdot (n+1)^2. 
\end{equation*}
Combing these two bounds, we obtain 
\begin{equation*}
    \tr(\widetilde{H}q(x_m))\le -0.4496(n+1)^2, \quad \mathrm{det}(\widetilde{H}q(x_m))\ge 0.0398(n+1)^2,
\end{equation*}
which means that the Hessian matrix of $q$ at point $x_m$ is strictly negative definite and, thus, $x_m$ is an isolated local maximal point of $q$ and $q(x_m)=1$ is local maximum. 

For $x\ne x_m$ with $d(x,x_m)\le \frac{\pi}{6(n+1)}$ we use the same arguments, but instead of the bounds from Theorem~\ref{deriv_bound_spheric} we use the bounds derived in Theorem~\ref{deriv_bound_spheric} and the corresponding estimates from Lemma~\ref{summation_lemma_for_Jackson}. Thus, for the on-diagonal entries we get 
\begin{equation*}
    \begin{split}
        &(H q(x))_{ii} = \\
        &=\sum\limits_{x_k\in \X } \al_{0,k} (H J_N(x,x_k))_{ii}+ \al_{1,k}(H X_1^y J_N(x,x_k))_{ii}+ \al_{2,k}(H X_2^y J_N(x,x_k))_{ii}\\
        &\le \al_{0,m} \widetilde{J}''_N(0)+ \nofty{\al_0}\left(|\widetilde{J}''_N(0)- (H J_N(x,x_m))_{ii}|  + \sum\limits_{x_k\ne x_m} |(H J_N(x,x_k))_{ii}| \right)\\
        &+ \nofty{\al_1}\left(|(H X_1^y J_N(x,x_m))_{ii}| + \sum\limits_{x_k\in \X \setminus \{x_m\}} |(H X_1^y J_N(x,x_k))_{ii}| \right)\\
        &+ \nofty{\al_2}\left(|(H X_2^y J_N(x,x_m))_{ii}| + \sum\limits_{x_k\in \X \setminus \{x_m\}} |(H X_2^y J_N(x,x_k))_{ii}| \right). 
    \end{split}
\end{equation*}
For the first entry $(H q(x))_{11}$, the above given inequality results  in  
\begin{equation*}
    \begin{split}
     (H q(x))_{11}&\le (n+1)^2 \left(-|\al_{0,m}|\frac{0.99}{3}+ \nofty{\al_0}\left(\frac{3}{20}\delta^2+ \frac{16.5 \cdot \tilde{a}_{\delta/\nu}}{(9.6)^4}\right)\right)   \\
     & + 2\nofty{\al_1} (n+1)^3\left(\frac{3}{10}\delta +\frac{103.5 \cdot \tilde{a}_{\delta/\nu}}{(9.6)^4}\right)
     \\
     &\le -0.1552 (n+1)^2,
    \end{split}
\end{equation*}
where $\delta= \pi/6$ and $\nu= 9.6 \pi$.
Following the same line, for  $(H q(x))_{22}$ and  $(H q(x))_{i j}$ for $i\ne j$ we obtain 
\begin{equation*}
    (H q(x))_{22}\le -0.1617 (n+1)^2, \quad |(H q(x))_{i j}| \le 0.1538 (n+1)^2,
\end{equation*}
that together provide us the following bounds 
\begin{equation*}
    \tr({H}q(x))\le -0.3169(n+1)^2, \quad \mathrm{det}({H}q(x))\ge 0.0004(n+1)^2,
\end{equation*}
Hence, the function $q$ is strictly concave on $B_{\frac{\pi}{6(n+1)}}(x_m)\setminus \{x_m\}$, which shows that $q(x)< 1$. Moreover, the Taylor expansion of the cosine and sine function shows 
\begin{equation*}
    \begin{split}
    J_N(x,x_m)&\ge 1 +\frac{\widetilde{J}''_N(0)}{2}d(x,x_m)^2\ge 1- \frac{(n+1)^2}{6}d(x,x_m)^2, \\
    |X_n^y J_N(x,x_m)|&\le |\widetilde{J}'_N(d(x,x_m))|\le |\widetilde{J}''_N(0)| d(x, x_m)\le \frac{(n+1)^2}{3} d(x,x_m), 
    \end{split}
\end{equation*}
meaning  that for $d(x, x_m)\le\frac{\pi}{6(n+1)} $ we have 
\begin{equation*}
    \begin{split}
        q(x)&\ge  \alpha_{0,m} J_{N}(x, x_m)- \Big( \|\alpha_1\|_{\infty} |X_1^y J_N(x,x_m)|+ \|\al_2 \|_{\infty}|X_2^y J_N(x,x_m)|\\
        &+  \sum\limits_{x_k\in \X \setminus \{x_m\}}  \|\al_{0}\|_{\infty}|J_N(x,x_k)|+ \nofty{\al_1}|X_1^y J_N(x,x_k)|\\
        &+\nofty{\al_2}|X_2^y J_N(x,x_k)|\Big)\ge 0.92.
    \end{split}
\end{equation*}
This shows that $0.92\le q(x)\le 1$ for $x$ such that $d(x, x_m)\le\frac{\pi}{6(n+1)} $. In case $q(x_i)=-1$, the analogous arguments with changing sings show that $x_i$ is an isolated local minimal point. 
\end{proof}

\begin{lem}\label{q_less_1_for_not_close}
Suppose the set of interpolation points $\X= \{x_i\}_{m=0}^M$ satisfies the separation condition \eqref{separation_DC_section}  \eqref{separation_DC_section} and $x\in \St$ is such that  $d(x, x_m)\ge \frac{\pi}{6(n+1)}$, for all interpolation point $x_m\in \X$. Then the interpolating function $q$ from Corollary~\ref{solution_coeff_bounds_cor} fulfills 
\begin{equation*}
    |q(x)|\le 1. 
\end{equation*}
\end{lem}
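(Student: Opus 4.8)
The plan is to bound $|q(x)|$ directly by the triangle inequality, splitting the sum defining $q$ into the contribution of the nearest interpolation point and the tail over all remaining points. Since $x$ is now assumed to satisfy $d(x,x_m)\ge \tfrac{\pi}{6(n+1)}$ for \emph{every} $x_m\in\X$, there is no singular term to worry about: every term $J_N(x,x_k)$, $X_1^y J_N(x,x_k)$, $X_2^y J_N(x,x_k)$ can be controlled by the pointwise localization bounds of Theorem~\ref{deriv_bound_general}, because $d(x,x_k)$ is bounded below. First I would let $x_m$ be a nearest point to $x$ in $\X$, so $d(x,x_m)\ge\tfrac{\pi}{6(n+1)}$, and observe that for $k\neq m$ the separation condition \eqref{separation_DC_section} together with the triangle inequality gives $d(x,x_k)\ge d(x_k,x_m)-d(x,x_m)\ge \tfrac{\nu}{n+1}-d(x,x_m)$, so one can still invoke a summation estimate of the Lemma~\ref{summation_lemma}/Lemma~\ref{summation_lemma_for_Jackson} type, now with the roles adjusted to the point $x$ at distance at least $\tfrac{\pi}{6(n+1)}$ from $x_m$ and at least $\tfrac{\nu}{n+1}$ from everything else.

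The key steps, in order, are: (i) write $|q(x)|\le \|\alpha_0\|_\infty\sum_{x_k}|J_N(x,x_k)| + \|\alpha_1\|_\infty\sum_{x_k}|X_1^yJ_N(x,x_k)| + \|\alpha_2\|_\infty\sum_{x_k}|X_2^yJ_N(x,x_k)|$; (ii) for the nearest point $x_m$, bound $|J_N(x,x_m)|\le \tfrac{\pi^4}{(n+1)^4 d(x,x_m)^4}\le \tfrac{\pi^4}{(n+1)^4}\cdot\tfrac{(6(n+1))^4}{\pi^4}=6^4=1296$ — this crude bound is too weak, so in fact one needs the \emph{global} bound on the Jackson kernel on the range $d\ge \tfrac{\pi}{6(n+1)}$ and must be more careful, or alternatively cover the annulus $\tfrac{\pi}{6(n+1)}\le d(x,x_m)\le \tfrac{c}{n+1}$ by a separate convexity/monotonicity argument and only use the decay estimate when $d(x,x_m)$ is a fixed fraction of the separation; (iii) for all other points use Lemma~\ref{summation_lemma_for_Jackson} (with $\varepsilon$ chosen so that $\varepsilon\tfrac{\nu}{n+1}\ge d(x,x_m)$ is compatible, i.e. effectively $\varepsilon$ close to $1/2$ or a routine re-derivation of Lemma~\ref{summation_lemma} for $x$ at distance up to a constant multiple of $\tfrac{1}{n+1}$ from the excluded point); (iv) insert the coefficient bounds $\|\alpha_0\|_\infty\le 1+6.3\cdot10^{-3}$, $\|\alpha_j\|_\infty\le \tfrac{6.3\cdot10^{-2}}{n+1}$ from Corollary~\ref{solution_coeff_bounds_cor}, noting the factors $(n+1)$ that Lemma~\ref{summation_lemma_for_Jackson} attaches to the derivative sums cancel against the $(n+1)^{-1}$ in $\|\alpha_j\|_\infty$; (v) check that the resulting numerical constant is $\le 1$ with room to spare (for $N\ge 20$, so $n+1\ge 10$), using $\nu=9.6\pi$, $\zeta(3)\approx 1.202$.

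The main obstacle is step (ii): the pointwise decay bound $|J_N(x,x_m)|\le \pi^4/((n+1)d(x,x_m))^4$ only becomes smaller than $1$ once $(n+1)d(x,x_m)>\pi$, i.e. $d(x,x_m)>\tfrac{\pi}{n+1}$, whereas the hypothesis here only gives $d(x,x_m)\ge\tfrac{\pi}{6(n+1)}$. So the region $\tfrac{\pi}{6(n+1)}\le d(x,x_m)\le \tfrac{\pi}{n+1}$ (roughly) is not handled by plain decay and must instead be absorbed into the concavity argument of Lemma~\ref{q_less_1_for_close}: that lemma actually establishes strict concavity and $q\ge 0.92$ on the ball of radius $\tfrac{\pi}{6(n+1)}$, and the same Hessian computation extends — at the cost of slightly worse but still negative-definite constants — out to radius, say, $\tfrac{\pi}{n+1}$ or a comparable fixed fraction of $\tfrac{\nu}{n+1}$. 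I would therefore expect the actual proof to first enlarge the "close" regime in Lemma~\ref{q_less_1_for_close} (or re-run its estimates at the larger radius) and then apply the crude decay-plus-summation bound only in the genuinely-far regime $d(x,x_m)\gtrsim \tfrac{1}{n+1}$, where the tail estimates from Lemma~\ref{summation_lemma_for_Jackson} and the coefficient bounds from Corollary~\ref{solution_coeff_bounds_cor} comfortably close the argument.
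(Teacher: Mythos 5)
You have correctly identified the central difficulty: the pointwise decay estimate $|J_N(x,x_m)|\le \pi^4/((n+1)d(x,x_m))^4$ only drops below $1$ once $(n+1)d(x,x_m)>\pi$, so the band $\tfrac{\pi}{6(n+1)}\le d(x,x_m)\lesssim \tfrac{\pi}{n+1}$ is not controlled by decay alone. However, the remedy you propose for this band --- re-running the Hessian/concavity argument of Lemma~\ref{q_less_1_for_close} at a larger radius --- does not work, and it is not what the paper does. The Hessian error estimates in Theorem~\ref{deriv_bound_spheric} are of the form $\tfrac{3}{20}\delta^2(n+1)^2$ and $\tfrac{3}{10}\delta(n+1)^3$, with the negative-definite leading contribution from $x_m$ only of size $\approx \tfrac{0.99}{3}(n+1)^2\approx 0.33(n+1)^2$. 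The quadratic error term already swamps the leading term around $\delta\approx 1.5$, well short of the $\delta\approx\pi$ one would need to hand off to the decay regime; moreover the hypotheses of Theorem~\ref{deriv_bound_spheric} (and the underlying small-$\omega$ bounds in Lemma~\ref{lemma_1}) are explicitly restricted to $\delta\le \pi/4$. So negative-definiteness of $Hq$ cannot be carried far enough, and even if it could, strict concavity by itself does not give $|q|<1$ without control of the boundary values at the transition radius.

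What the paper actually does in the intermediate band is bound the kernel value directly, not the Hessian of $q$. It splits into \emph{three} cases: $\tfrac{\pi}{6(n+1)}\le d(x,x_m)\le \tfrac{11\pi}{10(n+1)}$, then $\tfrac{11\pi}{10(n+1)}\le d(x,x_m)\le \tfrac{4\pi}{(n+1)}$, then $d(x,x_m)\ge \tfrac{4\pi}{(n+1)}$ for all $m$. In the first (intermediate) case it exploits that the Jackson kernel is a nonnegative cosine polynomial with nonnegative Fourier coefficients, so the alternating-series Taylor bound $\cos s\le 1-\tfrac{s^2}{2}+\tfrac{s^4}{24}$ applied coefficientwise gives
\begin{equation*}
|J_N(x,x_m)|=\widetilde{J}_N(d(x,x_m))\le 1-\frac{|\widetilde{J}''_N(0)|}{2}d(x,x_m)^2+\frac{\widetilde{J}^{(4)}_N(0)}{24}d(x,x_m)^4,
\end{equation*}
which in the scaled variable $t=(n+1)d(x,x_m)$ reads $\widetilde{J}_N(t/(n+1))\le 1-\tfrac{0.99}{6}t^2+\tfrac{1}{80}t^4$. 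This quartic is then analysed by monotonicity (decreasing on $[\pi/6,t_0]$, increasing on $[t_0,11\pi/10]$ with $t_0=\sqrt{20\cdot 0.99/3}$), and the derivative term is handled via $|X_n^yJ_N(x,x_m)|\le |\widetilde{J}''_N(0)|\,d(x,x_m)$. Combined with the tail sums from Lemma~\ref{summation_lemma_for_Jackson} and the coefficient bounds of Corollary~\ref{solution_coeff_bounds_cor}, this produces numbers strictly below $1$ ($0.9902$, resp.\ $0.9682$). Only for the two outer ranges does the paper switch to the decay-plus-summation strategy you describe in steps (i), (iii)--(v), which are essentially correct there. So the missing idea in your proposal is the alternating-series upper bound on $\widetilde{J}_N$ itself; step (ii) as you have written it should be replaced by that, and the ``enlarge the concavity region'' route should be abandoned.
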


\begin{proof}

Here we split the proof into three parts, namely the first case corresponds to those $x\in \St$ with $\frac{\pi}{6(n+1)}\le d(x,x_m)\le \frac{11\cdot \pi}{10(n+1)}$, the second to $\frac{11\cdot \pi}{10(n+1)}\le d(x,x_m)\le \frac{4\cdot \pi}{(n+1)} $ and the third to those $x\in \St$ such that $ \frac{ 4 \cdot \pi}{(n+1)}\ge d(x,x_m)$ for all interpolation point $x_m\in \X$. 
So we need to bound 
\begin{equation}\label{DC_for_not_close}
    \begin{split}
     |q(&x)|=   \left|\sum\limits_{k=0}^{M}\alpha_{0,k}J_N(x, x_k)+ \alpha_{1,k}X_1^y J_N(x, x_k)+\alpha_{2,j}X_2^y J_N(x, x_k)\right|\\
     &\le  \nofty{\al_0}|J_N(x,x_m)|+ \nofty{\al_1}|X_1^y J_N(x,x_m)|+ \nofty{\al_2}|X_2^y J_N(x,x_m)|\\
     &+ \sum\limits_{x_k\ne x_m}  \|\al_{0}\|_{\infty}|J_N(x,x_k)|
   + \nofty{\al_1}|X_1^y J_N(x,x_k)|+\nofty{\al_2}|X_2^y J_N(x,x_k)|
    \end{split}
\end{equation}

In the first case, due to the Taylor expansion of the cosine and sine function and the positivity of the Jackson kernel we have 
\begin{equation}\label{eq 2:24}
    |J_N(x,x_m)|=\widetilde{J}_N(d(x,x_m))\le 1- \frac{|\widetilde{J}''_N(0)|}{2} d(x,x_m)^2+ \frac{|\widetilde{J}^{(4)}_N(0)|}{24} d(x,x_m)^4.
\end{equation}
Then using again  the fact for $N\ge 20 $,
$(n+1)\ge 10$ the inequality \eqref{J_N_trick} holds, and additionally 
\begin{equation*}
   |\widetilde{J}^{(4)}_N(0)|=\frac{1}{30}n(n+2)(9n(n+2)-2)\le \frac{3}{10}(n+1)^4,
\end{equation*}
 we can rewrite the inequality \eqref{eq 2:24}  for $t\in [\frac{\pi}{6}, \frac{11\pi}{10}]$  in the form 
 \begin{equation*}
     \widetilde{J}_N(t/(n+1))\le 1-\frac{0.99}{6}t^2+ \frac{1}{80}t^4. 
 \end{equation*}
 The polynomial on the right hand side is monotonic  decreasing for $t\in [\frac{\pi}{6}, t_0]$, where $t_0=\sqrt{\frac{20\cdot 0.99}{3}}\approx 2.5691$ and monotonic  increasing for $t\in [t_0, \frac{11\pi}{10}]$.  Similarly,  due to $|\sin{(r\w)}|\le k\w$ we have 
 \begin{equation*}
     |X_n^yJ_N(x,x_m)|\le |\widetilde{J}''_N(d(x,x_m))|\le |\widetilde{J}''_N(0)| d(x,x_m)\le \frac{(n+1)^2}{3} d(x,x_m). 
 \end{equation*}
  
 Accordingly, for $\frac{\pi}{6(n+1)} \le d(x,x_m) \le  \frac{11\pi}{10(n+1)}$, we can estimate \eqref{DC_for_not_close} using the bounds above and the estimates from Lemma~\ref{summation_lemma_for_Jackson} and Corollary~\ref{solution_coeff_bounds_cor} as 
 \begin{equation*}
     \begin{split}
         |q(x)|&\le \nofty{\al_0}\left(1-\frac{0.99}{6}t^2+ \frac{1}{80}t^4\right)+ 2\nofty{\al_1} \cdot \frac{t}{3}(n+1)\\
         &+\nofty{\al_0}\cdot\frac{\tilde{a}_{t/ \nu}}{(9.6)^4} + 2\nofty{\al_1} \cdot\frac{3\cdot \tilde{a}_{t/ \nu}\cdot(n+1)}{(9.6)^4},
     \end{split}
 \end{equation*}
 where $t=d(x,x_m)(n+1)$, which results in the bounds
 \begin{equation*}
     |q(x)|\le \left\{\begin{matrix} 0.9902
     & \frac{\pi}{6(n+1)} \le d(x,x_m) \le  \frac{t_0}{(n+1)},\\
  
    0.9682 &  \frac{t_0}{(n+1)} \le d(x,x_m) \le  \frac{11\pi}{10(n+1)},
     \end{matrix}\right. 
 \end{equation*}
and completes the first case. For the second case. i.e. $ \frac{11\pi}{10(n+1)} \le d(x,x_m) \le  \frac{4\pi}{(n+1)}$ we use the bounds from Theorem~\ref{deriv_bound_general}
\begin{equation*}
\begin{split}
     |J_N(x,x_m)|&\le \frac{\pi^4}{(n+1)^4 d(x,x_m)^4}\le \frac{\pi^4}{t^4},\\
     |X_n^y J_N(x,x_m)|&\le \frac{3\cdot\pi^4}{(n+1)^3 d(x,x_m)^4}\le \frac{3\pi^4}{t^4}(n+1),
\end{split}
\end{equation*}
for $t= d(x,x_m)(n+1)$. For this reason, we have the estimate 
\begin{equation*}
    |q(x)|\le \nofty{\al_0}\frac{\pi^4}{t^4}+ 2\nofty{\al_1} \frac{3\pi^4}{t^4}(n+1)\\
    +  \nofty{\al_0}\frac{\tilde{a}_{t/\nu}}{(9.6)^4}+ 2 \nofty{\al_1}\frac{3\tilde{a}_{t/\nu}\cdot(n+1)}{(9.6)^4},
\end{equation*}
which shows that for $ \frac{11\pi}{10(n+1)} \le d(x,x_m) \le  \frac{4\pi}{(n+1)}$ the absolute values of $q$ can be bounded as 
\begin{equation*}
    |q(x)|\le 0.9618. 
\end{equation*}
Lastly, in case $d(x,x_m)>  \frac{4\pi}{(n+1)}$ for all interpolation points $x_m\in \X$ the set $\X\cup \{x\}$ fulfills a separation distance of $\frac{4\pi}{(n+1)}$. Therefore we can again apply the bounds from Theorem~\ref{deriv_bound_general}, Lemma~\ref{summation_lemma} and Corollary~\ref{solution_coeff_bounds_cor} to bound
the absolute values of $q$. Thus, we have 
\begin{equation*}
\begin{split}
    |q(x)|&\le \sum\limits_{x_k\in \X}  \|\al_{0}\|_{\infty}|J_N(x,x_k)|
   + \nofty{\al_1}|X_1^y J_N(x,x_k)|+\nofty{\al_2}|X_2^y J_N(x,x_k)|\\
   & \le \|\al_{0}\|_{\infty}\frac{\tilde{a}_{0}}{4^4}+ 2\|\al_{1}\|_{\infty} \frac{3 \tilde{a}_{0}(n+1)}{4^4}\le 0.1618. 
\end{split}
\end{equation*}
\end{proof}
Combining Corollary~\ref{solution_coeff_bounds_cor}, Lemma~\ref{q_less_1_for_close} and Lemma~\ref{q_less_1_for_not_close} shows the existence of a dual certificate $q$. We summarize this result in the following theorem. 
\begin{thm}\label{DC_existence}
Suppose that the set $\X= \{x_i\}_{i=0}^M$ 
fulfills a separations distance of $\rho(\X)\ge \frac{19.2 \pi}{N}$ for $N\ge 20$. Then for each sign  combinations $u_i\in \{-1,1\}$, there exists a generalized polynomial  $q\in \Pi_N(\St)$ such that 
\begin{equation*}
    \begin{split}
   q(x_i)&=u_i,     \quad x_i \in \X,\\
   |q(x)|&< 1,  \quad x\in \St\setminus \X. 
    \end{split}
\end{equation*}
\end{thm}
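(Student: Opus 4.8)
The statement is essentially a repackaging of Corollary~\ref{solution_coeff_bounds_cor}, Lemma~\ref{q_less_1_for_close} and Lemma~\ref{q_less_1_for_not_close}, so the plan is to unwind the hypothesis into the form those results require and then glue the pieces together. First I would rewrite the separation assumption: since $n=\lrf{N/2}$ obeys $n+1\ge N/2$, the bound $\rho(\X)\ge \frac{19.2\pi}{N}$ implies $\rho(\X)\ge \frac{9.6\pi}{n+1}$, which is exactly \eqref{separation_DC_section} with $\nu=9.6\pi$; also $N\ge 20$ forces $n\ge 9$, and one checks directly that $\nu^4=(9.6\pi)^4\ge \frac{3}{0.99}\,b\,C_2$ with $b=5.6\ge 3$. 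Hence all standing hypotheses of this and the previous section hold.

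Under these hypotheses Corollary~\ref{solution_coeff_bounds_cor} (equivalently Theorem~\ref{coeff_bounds_th}) guarantees that the interpolation matrix $K$ in \eqref{interpol_system} is invertible, so the system has a unique coefficient vector $\alpha=(\alpha_0,\alpha_1,\alpha_2)$, and the generalized polynomial $q$ built from the Jackson kernel via \eqref{DC} lies in $\Pi_N(\St)$ and satisfies the Hermite conditions $q(x_i)=u_i$ and $X_1q(x_i)=X_2q(x_i)=0$ for all $x_i\in\X$. In particular the equality part of the claim, $q(x_i)=u_i$, holds by construction, and we may use the bounds $\|\alpha_0\|_\infty\le 1+6.3\cdot 10^{-3}$, $\|\alpha_1\|_\infty,\|\alpha_2\|_\infty\le \frac{6.3\cdot 10^{-2}}{n+1}$ and $|\alpha_{0,i}|\ge 1-6.3\cdot 10^{-3}$ in what follows.

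For the strict bound $|q(x)|<1$ on $\St\setminus\X$ I would argue by cases on the distance from $x$ to the closest interpolation point. If $0<d(x,x_m)\le \frac{\pi}{6(n+1)}$ for some $x_m\in\X$, Lemma~\ref{q_less_1_for_close} applies; its proof produces a strictly definite Hessian of $q$ on the punctured ball $B_{\pi/(6(n+1))}(x_m)\setminus\{x_m\}$, so $q$ is strictly concave (if $u_m=1$) or strictly convex (if $u_m=-1$) there and the value $\pm 1$ is attained only at $x_m$, giving $|q(x)|<1$. If instead $d(x,x_m)\ge \frac{\pi}{6(n+1)}$ for every $x_m\in\X$, Lemma~\ref{q_less_1_for_not_close} covers the three sub-regimes $\frac{\pi}{6(n+1)}\le d\le \frac{11\pi}{10(n+1)}$, $\frac{11\pi}{10(n+1)}\le d\le \frac{4\pi}{n+1}$, and $d>\frac{4\pi}{n+1}$, and in each it yields an explicit bound ($0.9902$, $0.9682$, $0.9618$, or $0.1618$) that is strictly below $1$. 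Since these two cases exhaust $\St\setminus\X$, we conclude $|q(x)|<1$ there, and $q$ is the required dual certificate.

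There is no serious obstacle here, since every estimate has already been established in Section~\ref{loc_kernels_sec} and the earlier parts of this section; the only points meriting a little care are (i) confirming that the concrete numbers $\rho(\X)\ge\frac{19.2\pi}{N}$, $N\ge 20$, really do satisfy the admissibility condition $\nu^4\ge\frac{3}{0.99}\,b\,C_2$ with some $b\ge 3$ (which is the content of Corollary~\ref{solution_coeff_bounds_cor}), and (ii) observing that the near-point estimate of Lemma~\ref{q_less_1_for_close} is genuinely strict because it comes from strict definiteness of the Hessian rather than from a $\le$-type pointwise bound, so the final inequality can be asserted as $|q(x)|<1$ rather than merely $\le 1$.
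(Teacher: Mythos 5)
Your proposal is correct and follows exactly the route the paper takes: the paper's proof of this theorem is literally the one-line combination of Corollary~\ref{solution_coeff_bounds_cor}, Lemma~\ref{q_less_1_for_close} and Lemma~\ref{q_less_1_for_not_close}, which you reproduce and usefully expand (in particular your remark on why the bound is strict, which the lemmas state only as $|q(x)|\le 1$, is a point the paper glosses over).
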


The existence of a dual certificate immediately provides the recovery of a sought measure via the minimization of the total variation. 

\begin{cor}
Suppose the support of the singed measure $\mu^\star$  fulfills the separation condition 
\begin{equation*}
    \min\limits_{x\ne y} d(x,y)\ge \frac{19.2\pi}{N}, \quad x,y \in \supp{(\mu^\star)}\subset \St,
\end{equation*}
for $N\ge 20$, then the measure $\mu^\star$ is the unique solution of the minimization problem 
\begin{equation*}
    \min\limits_{\mu \in \M (\St, \R)} \|\mu\|_{\mathrm{TV}},\quad\mbox{ subject to }\quad \pP_N^*\mu=\pP_N^*\mu^\star.
\end{equation*}
\end{cor}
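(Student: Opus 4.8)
The plan is to obtain this as the standard consequence of the existence of a dual certificate: the analytic work has already been carried out in Theorem~\ref{DC_existence}, and what remains is the convex-duality deduction together with the identification of the atomic weights. First, $\mu^\star$ trivially satisfies the constraint of \eqref{eq:1:3}. Let $u_i\in\{-1,1\}$ be the sign pattern of the amplitudes $c_i$ and let $q\in\Pi_N(\St)$ be the certificate supplied by Theorem~\ref{DC_existence} for this sign pattern, so that $q(x_i)=u_i$ on $\X=\supp(\mu^\star)$, $|q(x)|<1$ on $\St\setminus\X$, and in particular $\|q\|_\infty=1$. Since $q\in\Pi_N(\St)$, it satisfies $q=\pP_N q$, i.e. $q(x)=\int_\St q(y)D_N(x,y)\,\dx\Omega(y)$; because $D_N$ is symmetric, Fubini's theorem gives, for every feasible $\mu$,
\begin{equation*}
\int_\St q\,\dx(\mu-\mu^\star)=\int_\St q(y)\,\bigl(\pP_N^\ast(\mu-\mu^\star)\bigr)(y)\,\dx\Omega(y)=0 ,
\end{equation*}
so that $\int_\St q\,\dx\mu=\int_\St q\,\dx\mu^\star=\sum_i c_iu_i=\sum_i|c_i|=\|\mu^\star\|_{\mathrm{TV}}$.

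From here the argument is classical. For any feasible $\mu$ one has $\|\mu\|_{\mathrm{TV}}\ge\int_\St q\,\dx\mu=\|\mu^\star\|_{\mathrm{TV}}$ because $\|q\|_\infty\le 1$, so $\mu^\star$ is a minimizer. For uniqueness I would take an arbitrary minimizer $\mu$, so that $\|\mu\|_{\mathrm{TV}}=\int_\St q\,\dx\mu$, and split $\mu$ according to the partition $\{\X,\St\setminus\X\}$:
\begin{equation*}
\|\mu\|_{\mathrm{TV}}=\int_\St q\,\dx\mu\le\int_\X|q|\,\dx|\mu|+\int_{\St\setminus\X}|q|\,\dx|\mu|\le|\mu|(\X)+\int_{\St\setminus\X}|q|\,\dx|\mu| .
\end{equation*}
Since $|q|<1$ strictly on the open set $\St\setminus\X$, the last inequality is strict whenever $|\mu|(\St\setminus\X)>0$, contradicting the equality; hence $|\mu|(\St\setminus\X)=0$ and, $\X$ being finite, $\mu=\sum_{i=1}^M c_i'\delta_{x_i}$ for some reals $c_i'$.

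It then remains to recover the weights. The moment constraint gives $\sum_i(c_i'-c_i)D_N(\cdot,x_i)\equiv 0$ on $\St$; integrating this against $J_N(\cdot,x_k)\,\dx\Omega$ and using that $J_N(\cdot,x_k)\in\Pi_N(\St)$ by construction (cf.\ \eqref{Jackson_as_sum}) together with the reproducing property $\int_\St D_N(x,x_i)J_N(x,x_k)\,\dx\Omega(x)=J_N(x_i,x_k)$, I obtain $J_{00}(c'-c)=0$ with $J_{00}=\bigl(J_N(x_i,x_j)\bigr)_{i,j=1}^M$. Under the separation hypothesis $\rho(\X)\ge 19.2\pi/N$ with $N\ge 20$ (which corresponds to $\nu=9.6\pi$ in \eqref{separation_DC_section}), Lemma~\ref{lemma_with_Cs} gives $\|I-J_{00}\|_\infty\le C_0/\nu^4<1$, so $J_{00}$ is invertible; hence $c'=c$ and $\mu=\mu^\star$.

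I do not expect a genuine obstacle: the difficult content — constructing $q$ and verifying $|q|<1$ off $\X$ — is entirely contained in Theorem~\ref{DC_existence}, and this corollary only repackages it. The two steps needing minor care are the measure-theoretic localization of the support of a competing minimizer to $\X$ via the \emph{strict} inequality $|q|<1$, and the weight identification, which I prefer to carry out through invertibility of the Jackson Gram matrix $J_{00}$ (Lemma~\ref{lemma_with_Cs}) rather than by arguing directly about linear independence of $\{D_N(\cdot,x_i)\}_{i=1}^M$.
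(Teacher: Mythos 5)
Your argument is correct and follows the same underlying route as the paper: deduce the result from the dual certificate supplied by Theorem~\ref{DC_existence} via convex duality. The paper's own proof, however, is essentially a one-line appeal to the ``null-space property'' and leaves the entire duality argument implicit, whereas you carry it out explicitly — feasibility of $\mu^\star$, the identity $\int q\,\dx\mu=\|\mu^\star\|_{\mathrm{TV}}$ for any feasible $\mu$ via $q=\pP_N q$ and Fubini, optimality from $\|q\|_\infty\le 1$, support localization of any competing minimizer to $\X$ from the strict bound $|q|<1$ off $\X$, and finally the weight identification. The last step in particular is glossed over by the paper: knowing $\supp(\mu)\subseteq\X$ does not by itself give $\mu=\mu^\star$, and your closure of this gap via invertibility of the Gram matrix $J_{00}$ (which Lemma~\ref{lemma_with_Cs} guarantees under the stated separation, since $C_0/\nu^4<1$ for $\nu=9.6\pi$) is a clean and self-contained way to conclude; arguing linear independence of $\{D_N(\cdot,x_i)\}_i$ directly would also work but would require an extra step. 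In short, same strategy, but your version is a genuine completion of an argument the paper only sketches.
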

\begin{proof}
Theorem~\ref{DC_existence} guarantees the existence of a dual certificate. Hence, the operator $\pP^*$ has the null-space property  with respect to $\supp{(\mu^\star)}$ and the therefore $\mu^\star$ is a unique real solution of the minimization problem, which finishes the poof and this section. 
\end{proof}

\section{Numerical Solution }

\subsection{Semidefinite Formulation of the Optimization Problem}\label{subsec:5:1}

At first glance, finding the solution of the total variation minimization problem~\eqref{eq:1:3} might seem rather complicated since it is an infinite dimensional optimization problem over the whole measure space $\M (\St, \R)$,  therefore numerically not feasible. 
The aim of this section is to provide a formulation of the convex program  problem~\eqref{eq:1:3} such that it can be handled by the existing convex optimization engines. To do so, we follow the ideas in \cite{Candes2014, Bendory2015b, FrankKrisrof2016}. 

First, let us switch to the convex pre-dual to  \eqref{eq:1:3} that is  given by 
\begin{equation*}
    \max\limits_{f\in L^2(\St)} \mathrm{Re}\lr{\pP_N f,\pP_N^*\mu^\star} \quad\mbox{ subject to }\quad \nofty{\pP_N f} \le 1.  
\end{equation*}
Since $\pP_N$ in the projection operator given in~\eqref{projection_onto_Pi_N}, immediately  the equivalent formulation follows   
\begin{equation}\label{dual_problem_con}
\tag{dRP}
    \max\limits_{f\in\Pi_N(\St)} \mathrm{Re}\lr{f,\pP_N^*\mu^\star} \quad\mbox{ subject to }\quad \nofty{f} \le 1, 
\end{equation}
where the constraint imposes that the modulus of the generalized polynomial 
\begin{equation*}
\begin{split}
    f(x)=f(x(r, \te))&=\sum_{l=0}^N\sum_{m=-l}^{l}f_{l,m} Y_{l}^{m}(r,\te)= \sum_{l=0}^N\sum_{m=-l}^{l} f_{l,m}  N_{l m} P_{l}^{ m}(\cos (r)) \e^{\im m \te}
     \end{split}
\end{equation*}
is uniformly bounded by $1$ over the whole $\St$, i.e. for   $(r,\te )\in [0,\pi)\times [0,2\pi ]$.
On the grounds that there is a deeply developed theory for multi-variate trigonometric polynomials providing numerous condition on their boundedness on a frequency  domain, see e.g. \cite{Dumitrescu2007}, it would be clearly better to represent $f$ as a purely trigonometric expressions. 
Since each associated Legendre polynomial $P_{l}^{ m}(\cos (r))$ can be uniquely represented in as $P_{l}^{ m}(\cos (r))=\sum_{k=-l}^{l} p_{lm k}\e^{-\im k  r }$, we easily obtain  
\begin{equation*}
\begin{split}
   f(r, \te)=\sum_{l=0}^N\sum_{m=-l}^{l} \sum_{k=-l}^{l}  f_{l,m}  N_{l m}  p_{l m k}\e^{-\im k  r }\e^{\im m \te}
     = \sum_{m=-N}^N\sum_{k=-N}^{N} \tilde{f}_{m k}\e^{-\im k  r }\e^{\im m \te},
 \end{split}
 \end{equation*}
where the coefficients $\title{f}_{m k}$, see \cite {Bendory2015b}, are defined as  
 \begin{equation*}
    \tilde{f}_{m k}= \sum_{l=0}^N  \tilde{f}_{m k l}, \quad \text{with} \quad  \tilde{f}_{l m k}=  \left\{ \begin{matrix}  f_{l,m}  N_{l m}  \title{p}_{l m k}, & m,k\in [-l, l],\\
     0, &else. 
     \end{matrix}\right.
 \end{equation*}
Now, to replace the norm constraint in \eqref{dual_problem_con} by a finite dimensional conditions we use so-called \textit{the Bounded  Real Lemma}. To do this, let us first throw light on  the notion of the half-space $\HH$. S set $\HH $ is called a half-space of $\Z^2$  if $ \HH \cap (-\HH)=\{0\}$, and $\HH\cup (-\HH)= \Z^2$ and $\HH+ \HH\subset \HH$. A standard way to construct half-space is given iteratively. We start with $\HH_1=\N$ and we say that $\bb k \in \HH $ if either $k_2>0$ or $k_2=0$ and $k_1\in \N $, of course, such iterative representation is very useful for numerical purposes.  With this preparation, a particular version of the Bounded  Real reads as following. 
\begin{lem}\label{BRl}
 \cite{Dumitrescu2007} 
 Let $h(r, \te)$ be a positive orthant polynomial defined  by
\begin{equation*}
h(r, \te)=\sum_{m=0}^n \sum_{k=0}^n h_{m,k} \e^{\im k  r }\e^{\im m \te}, 
\end{equation*}
where $ {(r,\te)}\in \mathcal{D}= [-\pi, \pi]^2$. Then 
providing that $|H (r,\te)|\le 1 $ on the frequency domain $ \mathcal{D}$, there exist a positive semi-definite matrix $Q_0 \in \C^{(2 N+1)^2\times(2 N+1)^2}$ and s half-space $\HH \subset \Z^2$ such that 
\begin{equation}\label{finite_dim_const}
    \begin{bmatrix} 
  Q & h\\
  h^{\mathrm{H}}& 1
    \end{bmatrix}\succcurlyeq 0,  
     \quad \tr(\Omega_{\bb k} Q)= \delta_{\bb k}, \quad  \bb k \in \HH, \quad  -n \le \bb k \le n,
\end{equation}
where ${h\in \C^{(2N+1)^2}}$ is a column stacked  vector of coefficients~$h_{m,k}$.
\end{lem}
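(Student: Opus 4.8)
The plan is to recognize the matrix conditions \eqref{finite_dim_const} as a Gram, or sum-of-squares, certificate for a nonnegative trigonometric polynomial, and then to read the lemma off the standard dictionary between positive semidefinite matrices and nonnegative trigonometric polynomials on the torus. First I would fix the column vector $\phi(r,\te)$ whose entries are the exponential monomials $\e^{\im k r}\e^{\im m\te}$, indexed compatibly with the coefficient vector $h$ so that $|h(r,\te)|^2=\phi(r,\te)^{\mathrm{H}}\,(h\,h^{\mathrm{H}})\,\phi(r,\te)$. Then I would record the two elementary facts on which everything rests: for any Hermitian matrix $Q$ of the appropriate size, the $\bb k$-th Fourier coefficient of the trigonometric polynomial $\phi(r,\te)^{\mathrm{H}}Q\,\phi(r,\te)$ equals $\tr(\Omega_{\bb k}Q)$, so the constraints $\tr(\Omega_{\bb k}Q)=\delta_{\bb k}$ imposed for $\bb k$ in a half-space $\HH$ --- the remaining coefficients being forced by the conjugate symmetry of $\phi^{\mathrm{H}}Q\,\phi$ --- are equivalent to the identity $\phi^{\mathrm{H}}Q\,\phi\equiv1$; and a Schur complement with respect to the strictly positive lower-right block turns the block inequality in \eqref{finite_dim_const} into $Q\succcurlyeq h\,h^{\mathrm{H}}$, equivalently $\phi^{\mathrm{H}}Q\,\phi\ge|h|^2$ pointwise.

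With these reductions in hand, the two implications separate cleanly. The easy one: if $Q\succcurlyeq 0$ satisfies \eqref{finite_dim_const}, then $1-|h(r,\te)|^2=\phi(r,\te)^{\mathrm{H}}\big(Q-h\,h^{\mathrm{H}}\big)\phi(r,\te)\ge0$ for every $(r,\te)\in\mathcal{D}$, so $|H|\le 1$ on $\mathcal{D}$. For the implication actually stated, I would apply a spectral factorization to the nonnegative trigonometric polynomial $1-|h|^2$: produce $P\succcurlyeq 0$ with $\phi^{\mathrm{H}}P\,\phi=1-|h|^2$, and set $Q_0:=h\,h^{\mathrm{H}}+P$. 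Then $Q_0\succcurlyeq h\,h^{\mathrm{H}}$ and $\phi^{\mathrm{H}}Q_0\,\phi=|h|^2+(1-|h|^2)\equiv1$; unwinding the two elementary facts above translates this into the block inequality together with the trace constraints, which is exactly \eqref{finite_dim_const}.

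The hard part is the spectral factorization step. In one variable it is the classical Fej\'er--Riesz lemma and is unproblematic, but for trigonometric polynomials in two or more variables a nonnegative polynomial need not be a sum of squares of polynomials of the naive degree; the remedy, following \cite{Dumitrescu2007}, is to let $P$, and hence $Q_0$, act on an enlarged monomial set --- which is exactly why $Q_0$ is taken in $\C^{(2N+1)^2\times(2N+1)^2}$ rather than in the smaller space matching $\deg h=n$ --- and to invoke the corresponding relaxation hierarchy, in which feasibility is guaranteed once the degree is taken large enough. Everything else (the Schur complement reformulation, the half-space bookkeeping of the trace constraints, and the verification that $\phi^{\mathrm{H}}Q_0\,\phi$ is the constant $1$) is routine linear algebra, so in writing this up I would keep those steps brief and defer the factorization itself to the reference.
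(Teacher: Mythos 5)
The paper does not prove this lemma at all: it is quoted verbatim (with some typographical noise --- $Q_0$ versus $Q$, $H$ versus $h$, the mismatch between the degree $n$ of $h$ and the size $(2N+1)^2$ of $Q$) from \cite{Dumitrescu2007}, so there is no internal proof to compare against. Your sketch is the standard Gram-matrix argument from that reference, and its skeleton is correct: the trace constraints over a half-space pin down the Fourier coefficients of $\phi^{\mathrm{H}}Q\phi$ to those of the constant $1$, the Schur complement converts the block inequality into $Q\succcurlyeq h h^{\mathrm{H}}$, and the two together give $1-|h|^2=\phi^{\mathrm{H}}(Q-hh^{\mathrm{H}})\phi\ge 0$; conversely one builds $Q_0=hh^{\mathrm{H}}+P$ from a sum-of-squares Gram matrix $P$ of $1-|h|^2$. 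You also correctly identify that the enlargement of the matrix size relative to $\deg h$ is there to accommodate the multivariate relaxation.

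The one place where your write-up is slightly too optimistic is the claim that ``feasibility is guaranteed once the degree is taken large enough.'' For bivariate trigonometric polynomials a \emph{merely nonnegative} polynomial need not admit a sum-of-squares decomposition at any relaxation order; the guarantee of eventual feasibility holds for \emph{strictly} positive polynomials (Dritschel-type results, as used in \cite{Dumitrescu2007}). Since the lemma is stated with the non-strict bound $|H(r,\theta)|\le 1$, the polynomial $1-|h|^2$ may vanish on $\mathcal{D}$ and the existence direction is, strictly speaking, only guaranteed under $|H|<1$ or asymptotically in the relaxation order --- a caveat the reference itself makes and which the paper's statement glosses over. This is a defect inherited from the lemma as transcribed rather than a flaw introduced by your argument, but if you write the proof out you should either assume strict inequality or state the conclusion as holding for all sufficiently large relaxation orders under that assumption.
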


 Returning to \eqref{dual_problem_con}, we see that  the domain $(r,\te )\in [0,\pi]\times [0,2\pi )$ of  the generalized polynomial $f$ is slightly shifted version of the frequency domain $\mathcal{D}$ in Lemma~\ref{BRl}, moreover $f$ is not a positive orthant polynomial. So to reach the desired frequency domain and to have the means to apply the Bounded  Real Lemma~\ref{BRl} we can consider, for example, a trigonometric polynomial 
 \begin{equation*}
     h(r,\te)= \e^{N r+ N\te}f(r+ \pi ,\te +\pi )= \sum_{m=0}^{2N}\sum_{k=0}^{2N} (-1)^{(m+k)}\tilde{f}_{m-N, k-N} \e^{\im k  r }\e^{\im m \te}.  
 \end{equation*}
The polynomial $h(r,\te)$  is
positive orthant,  has the same magnitude as $f$ does, and is defined on $(r,\te)\in [-\pi,0]\times [-\pi,\pi )\subset \mathcal{D}$. Hence, the dual problem \eqref{dual_problem_con} is equivalent to 
 
\begin{equation}\label{finite_dual_problem}
\tag{fdRP}
    \max\limits_{f\in\Pi_N(\St), \, Q} \mathrm{Re}\lr{f,\pP_N^*\mu^\star} \quad\mbox{ subject to }\quad \eqref{finite_dim_const}, 
\end{equation}
with $h_{m,k}=  (-1)^{(m+k)}\tilde{f}_{m-N, k-N}$, for $m,k=0,\dots 2N.$

By strong duality, if $\mu^\star$ is a solution of the primal problem~\eqref{eq:1:3} and $f^\star$  is any solution of the pre-dual problem, then  it follows 
\begin{equation*}
    \mathrm{Re}\lr{f^\star,\pP_N^*\mu^\star}= \mathrm{Re}\lr{\pP_N f^\star, \mu^\star}= \|\mu^\star\|_{\mathrm{TV}}.
\end{equation*}
In the case $\mu^\star$ is a discrete measure, this implies that the generalized polynomial is exactly equal to the sign  of $\mu^\star$, when $\mu^\star$ is not vanishing, namely
\begin{equation*}
    f^\star(x_i)= \mathrm{sign}_{\mu^\star}(x_i), \quad x_i \in \supp{(\mu^\star)}.
\end{equation*}
 which in turn means that the supporting points form a subset of the zeros of the polynomial $1-|f^\star(x)|\in \Pi_{2N}(\St)$. 
 
Summing up, we obtain the following computational algorithm.

\begin{algorithm}[H]
\KwData{low frequency information $\pP_N^* \mu^\star$ of a measure  ${\mu^\star\in \M(\R, \St)}$}
\Begin{
 Solve for $f^\star\in \Pi_N(\St)$,  $Q\in \C^{(2N+1)^2\times (2N+1)^2}$
 \begin{equation*}\label{dual_con_problem}
 \begin{split}
    \max\limits_{f\in \Pi_{N}, Q} \mathrm{Re}\lr{c,\pP_N^*\mu^\star} \quad\mbox{ s.t. } \quad & \begin{bmatrix} 
  Q & h\\
  h^{\mathrm{H}}& 1
    \end{bmatrix}\succcurlyeq 0, \quad \tr(\Omega_{\bb k} Q)= \delta_{\bb k}, \\
    & \quad   \quad   \quad \bb k \in \HH,   \quad  -n \le \bb k \le n 
     \end{split}
\end{equation*}
using  interior point method; 

\vspace{2mm}

Randomly choose $P$ initial points $\{x_1,\dots, x_p \}\subset \St$

\vspace{2mm}

\For{$k= 1, \dots, P $}{
Find $x_k^*$ via Conjugate Gradient  with initial point $x_l$;

\vspace{2mm}

\If{$(1-|q(x_k^*)|\le tol)$}{
 $\X= \X\cup \{x_k^*\}$}
 }
 
 \vspace{2mm}
 
 Set $\nu= \sum_{x_i \in \X} c_i \delta_{x_i}$ with $c_i$ such that 
 \begin{equation*}
    \nu= \mathrm{argmin} \|\pP_N^* \nu - y\|_2
 \end{equation*}
}
\KwResult{$\nu \in  \M(\R, \St)$}
 \caption{Super-resolution on $\St$ via SDP}
\end{algorithm}

\subsection{Discretization of the Optimization Problem}

To avoid the hing complexity of the the semi-definite program for the higher order of moments we propose to use a discretization  of the primal problem. In this section we discuss the convergence behavior of the solution of the discretized  problem. 

Let us choose a sequence of discrete sets $\G_n\subset \St$, then the  filling distance of $\G_n $ is defined as
\begin{equation*}
    \rho(\G_n)= \sup\limits_{x\in \St } \inf\limits_{y\in \G_n } d(x,y). 
\end{equation*}
For the chosen sets $\G_n$, we consider the discretized version of the primal problem \eqref{eq:1:3}, namely
\begin{equation}\label{eq_disc}
 \tag{RP${}_{n}$}
    \min\limits_{\supp{(\mu)}\subset \G_n } \|\mu\|_{\mathrm{TV}},\quad\mbox{ subject to }\quad \pP_N^*\mu=\pP_N^*\mu^\star,
\end{equation}
To discuss the convergence behavior of the solution of the discretized  problem~\eqref{eq_disc}, we follow ideas from \cite{Tang2013}, where the convergence for continuously parameterized dictionaries has been discussed. 

\begin{thm}
Let assume that the measure $\mu^\star= \sum_{ x_i\in X} c_i \delta_{x_i}$  is the unique solution of the primal problem \eqref{eq:1:3} and sequence  $\G_n$ of discretizations is chosen in such way  that 
\begin{equation*}
    \rho(\G_n) \to 0, 
\end{equation*}
when $n\to \infty$. Then each sequence of solutions $\mu_n$ of  \eqref{eq_disc} converges to $\mu^\star$ in the weak$^* $-topology. 
Moreover, there exists $\varepsilon> 0$, such that 
\begin{equation*}
    \mu_n(B_{\varepsilon}(x_i))\to c_i, \quad |\mu_n|(B_{\varepsilon}(x_i))\to c_i, 
\end{equation*}
and 
\begin{equation}\label{convergence_discrite}
    |\mu_n|\Big((\cup_ i B_{\varepsilon}(x_i))^c\Big)\to 0.  
\end{equation}
\end{thm}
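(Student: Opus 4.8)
The plan is to run a standard variational ($\Gamma$-convergence-type) argument, exploiting that $\pP_N^*$ has finite-dimensional range and that a dual certificate is available from Theorem~\ref{DC_existence}. First I would verify that \eqref{eq_disc} is feasible for all large $n$ and that it admits a competitor $\widetilde\mu_n$ supported on $\G_n$ with $\|\widetilde\mu_n\|_{\mathrm{TV}}\to\|\mu^\star\|_{\mathrm{TV}}$. Since $D_N(x,\cdot)$ is the reproducing kernel of $\Pi_N(\St)$, the family $\{D_N(\cdot,y):y\in\St\}$ spans $\Pi_N(\St)$; fix $y_1,\dots,y_L\in\St$, $L=\dim\Pi_N(\St)$, with $\{D_N(\cdot,y_j)\}_{j=1}^L$ a basis. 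For $n$ large the nearest grid points $y_j^{(n)}\in\G_n$ are still a basis, and the operator $R_n$ that solves $\sum_j a_jD_N(\cdot,y_j^{(n)})=g$ for $g\in\Pi_N(\St)$ is bounded uniformly in $n$ (its norm converges to that of the limiting configuration). Let $x_i^{(n)}\in\G_n$ be a nearest grid point to $x_i$; since $y\mapsto D_N(\cdot,y)$ is Lipschitz, the residual $r_n:=\pP_N^*\mu^\star-\pP_N^*\big(\sum_i c_i\delta_{x_i^{(n)}}\big)$ satisfies $\|r_n\|=O(\rho(\G_n))$, so adding the correction $\sum_j(R_nr_n)_j\,\delta_{y_j^{(n)}}$ produces an exactly feasible $\widetilde\mu_n$ with $\|\widetilde\mu_n\|_{\mathrm{TV}}\le\sum_i|c_i|+C\rho(\G_n)$; hence $\limsup_n\|\mu_n\|_{\mathrm{TV}}\le\|\mu^\star\|_{\mathrm{TV}}$.

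Next, $(\mu_n)$ is bounded in total variation, so by Banach--Alaoglu (the space $\M(\St,\R)$ of finite signed measures is a dual Banach space) some subsequence converges weak$^*$ to a measure $\mu_\infty$. For each fixed $x$, $D_N(x,\cdot)\in C(\St)$, so $\pP_N^*\mu_n(x)\to\pP_N^*\mu_\infty(x)$; since $\pP_N^*\mu_n\equiv\pP_N^*\mu^\star$, the limit $\mu_\infty$ is feasible for \eqref{eq:1:3}. Weak$^*$ lower semicontinuity of the total variation gives $\|\mu_\infty\|_{\mathrm{TV}}\le\liminf_n\|\mu_n\|_{\mathrm{TV}}\le\|\mu^\star\|_{\mathrm{TV}}$, so $\mu_\infty$ is a minimizer of \eqref{eq:1:3} and, by the assumed uniqueness, $\mu_\infty=\mu^\star$. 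As every subsequence has a further subsequence converging to $\mu^\star$, the whole sequence satisfies $\mu_n\rightharpoonup^{*}\mu^\star$, and also $\|\mu_n\|_{\mathrm{TV}}\to\|\mu^\star\|_{\mathrm{TV}}$.

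For the localization statements I would use the dual certificate $q\in\Pi_N(\St)$ of Theorem~\ref{DC_existence} for the sign pattern $u_i=\mathrm{sign}(c_i)$, so that $q(x_i)=\mathrm{sign}(c_i)$ and $|q|<1$ on $\St\setminus\X$. Pick $\varepsilon>0$ so small that the balls $B_\varepsilon(x_i)$ are pairwise disjoint; writing $K_\varepsilon:=\St\setminus\bigcup_i B_\varepsilon(x_i)$, compactness of $K_\varepsilon$ together with $K_\varepsilon\cap\X=\emptyset$ gives $\eta:=1-\sup_{K_\varepsilon}|q|>0$. Since $q\in\Pi_N(\St)$ and $\pP_N^*\mu_n=\pP_N^*\mu^\star$, one has $\int q\,d\mu_n=\int q\,d\mu^\star=\sum_i|c_i|=\|\mu^\star\|_{\mathrm{TV}}$; splitting this integral and using $|q|\le1$, resp.\ $|q|\le1-\eta$, on the two pieces,
\begin{equation*}
\|\mu^\star\|_{\mathrm{TV}}=\int q\,d\mu_n\le|\mu_n|\Big(\bigcup_i B_\varepsilon(x_i)\Big)+(1-\eta)\,|\mu_n|(K_\varepsilon)=\|\mu_n\|_{\mathrm{TV}}-\eta\,|\mu_n|(K_\varepsilon),
\end{equation*}
whence $\eta\,|\mu_n|(K_\varepsilon)\le\|\mu_n\|_{\mathrm{TV}}-\|\mu^\star\|_{\mathrm{TV}}\to0$, i.e.\ \eqref{convergence_discrite}, and consequently $|\mu_n|\big(\bigcup_i B_\varepsilon(x_i)\big)\to\sum_i|c_i|$. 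Because this holds for every small $\varepsilon$, the masses $|\mu_n|$ concentrate near $\X$; a cutoff-function argument together with $\mu_n\rightharpoonup^{*}\mu^\star$ then gives $\mu_n(B_\varepsilon(x_i))\to c_i$ for each $i$, and combining $\liminf_n|\mu_n|(B_\varepsilon(x_i))\ge|c_i|$ with $\sum_i|\mu_n|(B_\varepsilon(x_i))\to\sum_i|c_i|$ forces $|\mu_n|(B_\varepsilon(x_i))\to|c_i|$ for every $i$.

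The main obstacle is the first step: exhibiting a grid-supported competitor that is \emph{exactly} feasible for \eqref{eq_disc} while having total variation tending to $\|\mu^\star\|_{\mathrm{TV}}$. This is precisely the point where $\rho(\G_n)\to0$, the smoothness of $D_N$, and a uniform bound on the inverse of the discretized moment map enter, and it is the part that follows the scheme of \cite{Tang2013}; the compactness, lower-semicontinuity, and certificate-splitting steps are then routine.
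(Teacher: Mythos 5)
Your proof is correct and follows the same overall scheme as the paper (which itself follows Tang et al.): extract a weak$^*$ limit by Banach--Alaoglu, identify it with $\mu^\star$ via lower semicontinuity of the TV norm and the assumed uniqueness, and localize the mass by splitting the integral of a dual certificate. The substantive difference is in how the boundedness of $\|\mu_n\|_{\mathrm{TV}}$ (and the convergence $\|\mu_n\|_{\mathrm{TV}}\to\|\mu^\star\|_{\mathrm{TV}}$) is obtained. You work on the primal side, building an exactly feasible grid-supported competitor by snapping the spikes of $\mu^\star$ to $\G_n$ and correcting the $O(\rho(\G_n))$ residual with a uniformly conditioned basis $\{D_N(\cdot,y_j^{(n)})\}$ of $\Pi_N(\St)$. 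The paper instead works on the dual side: it uses the Marcinkiewicz--Zygmund inequality $\nofty{f}\le (1-N\rho(\G_n))^{-1}\max_{x\in\G_n}|f(x)|$ to show the feasible sets of the discretized dual problems are uniformly bounded, and then transfers this to $\|\mu_n\|_{\mathrm{TV}}$ by strong duality. Your route has the advantage of explicitly verifying that \eqref{eq_disc} is feasible for large $n$ --- a point the paper's strong-duality step tacitly presupposes --- and it avoids the MZ machinery; the paper's route is shorter once that inequality is available and yields the uniform dual bound directly. You also spell out the certificate-splitting step that the paper delegates to Corollary~1 of \cite{Tang2013}, and your conclusion $|\mu_n|(B_\varepsilon(x_i))\to|c_i|$ is the correct form of the (apparently miswritten) claim $|\mu_n|(B_\varepsilon(x_i))\to c_i$. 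One caveat applies equally to both arguments: the localization step needs a certificate with $q(x_i)=\mathrm{sign}(c_i)$ and $|q|<1$ strictly off $\X$, which is supplied by Theorem~\ref{DC_existence} only under the separation condition, not by the bare uniqueness hypothesis of the theorem; this should be stated as an additional assumption or the separation condition should be imposed.
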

\begin{proof}
Following the same line as in \cite{Tang2013}, we show that each sequence of solutions is bounded and thus, due to the sequentially Banach-Alaoglu Theorem, admits a weak$^*$ convergent subsequence that  converges to as solution of the continuous problem \eqref{eq:1:3}. 
For the boundedness, let us consider the discretized  convex pre-dual problem to \eqref{eq_disc}
\begin{equation}\label{dual_problem_con_dis_dis}
\tag{dRP$_n$}
    \max\limits_{f\in\Pi_N(\St)} \mathrm{Re}\lr{f,\pP_N^*\mu^\star} \quad\mbox{ subject to }\quad \nofty{f} \le 1, \, x\in \G_n
\end{equation}
We show that the feasible sets of the dual problem, i.e. the sets of function $f\in \Pi_N(\St)$ such that $|f(x)|\le 1$ for $x\in G_n$, are bounded and therefore compact. By assumption  $ \rho(\G_n) \to 0$, therefore we have the for large enough $n$, 
\begin{equation*}
    \rho(\G_n)\le \frac{1}{N}.
\end{equation*}
Applying the Marcinkiewicz-Zygmund inequality  \cite{KeinerKunis2007, Frank2008}, yields for all $f\in \Pi_N(\St)$
\begin{equation*}
    \nofty{f}\le (1-N \rho(\G_n))^{-1} \max\limits_{x\in \G_n} |f(x)|,
\end{equation*}
meaning that all feasible sets are uniformly bounded and, thus, compact. This shows that the discretization problem has a solution, and  we denote these minimizers by $f_n$. 

The rest of the proof is identical to the proof of Theorem~2 in \cite{Tang2013}. Let us briefly sketch it. Due to the uniform boundedness, it can be shown that the sequence $f_n$ of solutions of the discretized dual problem converges to the solution $f^\star$ of the continuous dual problem. Since strong duality holds for both the discretized problems and  the continuous one, we have \begin{equation*}
    \mathrm{Re}\lr{f_n,\pP_N^*\mu^\star}= \|\mu_n\|_{\mathrm{TV}} \quad \text{and } \quad  \mathrm{Re}\lr{f^\star,\pP_N^*\mu^\star}= \|\mu^\star\|_{\mathrm{TV}},
\end{equation*}
which shows that the sequence $\mu_n$ is bounded. Then due to the Banach-Alaoglu Theorem we get the weak$^*$ convergence to the minimizer $\mu^\star$.  The convergence of the measure of the epsilon balls follows analogically to  Corollary 1 in \cite{Tang2013}. 
\end{proof}

Additionally, as it  has been mentioned in \cite{Tang2013}, for fine enough discretization the property  \eqref{convergence_discrite} suggests that the support of the solutions $\mu_n$ of the discrete minimization problems cluster around the support of $\mu^\star$. 

Now let us proceed with the exact  digitization procedure via considering the following grid \cite{Rockmore1996} in the spherical coordinates for inclination $[0, \pi)$ and azimuth $[0, 2\pi)$
\begin{equation*}
    r_k= \frac{\pi(2j+1)}{4n}, \quad \quad \theta_k= \frac{2\pi k}{2n},\quad  \quad j,k= 0,\dots, 2n-1,  
\end{equation*}
for some $ n\in \N$. These points generate a grid $\G_n$ of $4n^2$ points. 

Let the matrix ${\bf Y}_N$ of spherical harmonics from $ \Pi_N (\St)$ evaluated at the grid points, 
\begin{equation}\label{matrix_of_harm}
{\bf Y_N} = \big( Y_{m}^{\ell}(x(r_k, \theta_k)\big)_{\ell=0,  |m| \le \ell}^{N} \quad \in \C^{\dim( \Pi_N) \times 4n^2}. 
\end{equation}
Using such a matrix notation, the problem \eqref{eq_disc} can be transformed to the following finite dimensional basis pursuit problem
\begin{equation}\label{basis_pursuit}
 \tag{dicRP${}_{n}$}
    \min\limits_{c\in \C^{4n^2}} \|c\|_{1},\quad\mbox{ subject to }\quad {\bf Y_N}\,  c = y,
\end{equation}
where $y= \big( \lr{ Y_{m}^{\ell} , \mu^* } \big )_{\ell=0,  |m| \le \ell}^{N}$ is the given data, and $c\in \C^{4n^2}$ represents the vector of coefficients of the spherical harmonics. Since we measure is supported only on a few points of the grid $\G_n$, we need to impose sparsity of $c$, which can be done by minimizing the $\ell_1$-norm of $c$. 

Due to the well-known basis mismatch phenomenon, a completely sparse solution of \eqref{basis_pursuit} can not be obtained and one needs a certain threshold in absolute value keeping only entries that are large enough, i.e. one keeps only points with solution $c^\star\in \G_{n}$ such that $|c^\star|> thresh$ . After that one can see that recover grid points cluster around the support of $\mu^\star$. To find the centers of such cluster, and correspondingly the support of $\mu^\star$ we use an algorithm known as the bivariate {\it kernel density estimator}, for more details see \cite{Comaniciu2002}, with the normal kernel $\ds K_N({\bf x})= (2\pi)^{-1/2}\exp{\left(-\frac{1}{2}\|{\bf x}/h\|^2_2\right)}$ and a scaling parameter $h>0$. 
Combining all above described steps results in our second algorithm based on the discretized optimization problem  \eqref{eq_disc}.

\begin{algorithm}[H]
\KwData{low frequency information $y= \big( \lr{ Y_{m}^{\ell} , \mu^* } \big )_{\ell=0,  |m| \le \ell}^{N}$  of the measure  ${\mu^\star\in \M(\R, \St)}$ 

\vspace{2mm}

{\bf Parameters:} grid-parameter $n\in \mathbb{N}$,  threshold $thresh>0$, scaling parameter $h$, $tol > 0$
}

\vspace{2mm}

\Begin{
 Solve for $c\in \C^{4n^2}$
\begin{equation}
 \tag{dicRP${}_{n}$}
    \min\limits_{c\in \C^{4n^2}} \|c\|_{1},\quad\mbox{ subject to }\quad {\bf Y_N}\,  c = y
\end{equation}
\vspace{2mm}

Choose $x_k\in G_n$ such that 
\begin{equation*}
    |c_k|>thresh. 
\end{equation*}

Set $\X=\{x_k\}_k$. 
\vspace{2mm}

Apply the kernel density estimation algorithm with the  scaling parameter $h$ and the tolerance $tol$ to the set $\X = \{x_k\}_k$ and generate $\X_{mean}=\{x_i^{mean}\}_i$

\vspace{2mm}

 Set $\nu= \sum_{x_i \in \X} c_i \delta_{x_i}$ with $c_i\in \R$ such that 
 \begin{equation*}
    \nu= \mathrm{argmin} \|\pP_N^* \nu - y\|_2
 \end{equation*}
}
\KwResult{$\nu \in  \M(\R, \St)$}
 \caption{Super-resolution on $\St$ via Discretization}
\end{algorithm}

\subsection{Numerical Experiments}

\subsubsection{Semi-definite Program}

~\
 
\textbf{Experiment 1} [Noise-free recovery].
The first experiment demonstrates the performance of Algorithm 1 for the signal reconstruction in the noiseless data case. 
A discrete measure 
\begin{equation*}
    \mu^\star= \sum_{i=1}^6 c_i \delta_{x_i}
\end{equation*}
with randomly generated support points $x_i=x( r_i,\theta_i)$ and amplitudes $c_i$, given in the Table~\ref{Tab:Tcr} is considered.  
The support  fulfills the minimal separation distance condition
\begin{equation*}
    \min\limits_{i\ne j} d(x_i, x_j)\ge0.549335, 
\end{equation*}
 and the given information about $\mu^\star$ is its low frequency information up to degree $N=6$.

\begin{table}[ht]

\centering
\begin{tabular}{| c || c| c |c| }
 \hline
\quad $i$  \quad \, & \quad  \quad  \quad $\theta_i$  \quad  \quad  \quad & \quad  \quad  \quad $r_i$ \quad  \quad  \quad &  \quad  \quad  \quad$c_i$  \quad  \quad  \quad\\ %
 \hline
1& 1.366427 &  0.412278  & 4.296273\\ 
 \hline
 2& 1.983298 &  2.591331  &  $-1.594284$\\  
 \hline
3& 2.589166  & 4.898989 &   $-1.058452$ \\
 \hline
4& 0.630283 &  3.460063  &  2.005496\\
 \hline
5& 1.294025  & 4.299585 & 4.071419\\
 \hline
6&  3.016381  & 3.455708 & 3.196665 \\
 \hline
\end{tabular}

\vspace{2mm}

\caption{The support points and amplitudes of the test measure  $\mu^\star$. }
 \label{Tab:Tcr}
\end{table}


Having this data at hand, we first compute the solution $f^*$ of the convex-optimization problem \eqref{dual_problem_con} using CVX-\texttt{Matlab} tools. Then the measure support is captured by looking for zeros of the function $1-|f^*|^2$. For this purpose,  the \texttt{Matlab} built-in solver \texttt{fminunc} is applied  with $P=20000$ initial  randomly generated points on the sphere.  We set the tolerance to ${tol=10^{-8}}$,  and identify the resulting  minima $x_i^{rec}$ that fulfill $1-|f^*|^2< tol$. To ensure that there are no several points clustered near a true support point, we finally apply the kernel density estimator procedure described in the previous section. 
The recovery error is measured respectively for the measure support and intensities by the quantities  $\epsilon_{x}$ and $\epsilon_{c}$ defined as 
\begin{equation}
\epsilon_{x}=\max\limits_i \min\limits_j d(x_i,x_j^{rec}) \quad  \quad \text{and} \quad  \quad 
\epsilon_{c}={\max\limits_{i}|c_i-c_i^{rec}|}.
\end{equation}

In such way,  after running  Algorithm 1 we get six recovered support points and coefficients  $\{x_j^{rec}\}_{j=1}^6$ and  $\{c_i^{rec}\}_{i=1}^6$  such that 
$$
\epsilon_{x} <  2.5895457 \cdot 10^{-7},   \quad  \quad \quad  \quad \epsilon_{c}  < 3.4205068\cdot 10^{-7}
$$
The results of the experiment are illustrated in Figure \ref{fig:num:1}. As can be seen the test measure $\mu^{\star} $ is successfully recovered in the noise-free case via Algorithm~1.

  \begin{figure}[ht!]
  \hspace*{\fill}%
  
  \subcaptionbox{ Support of $\mu^\star$. \label{fig12:a}}{\includegraphics[width=2.3in]{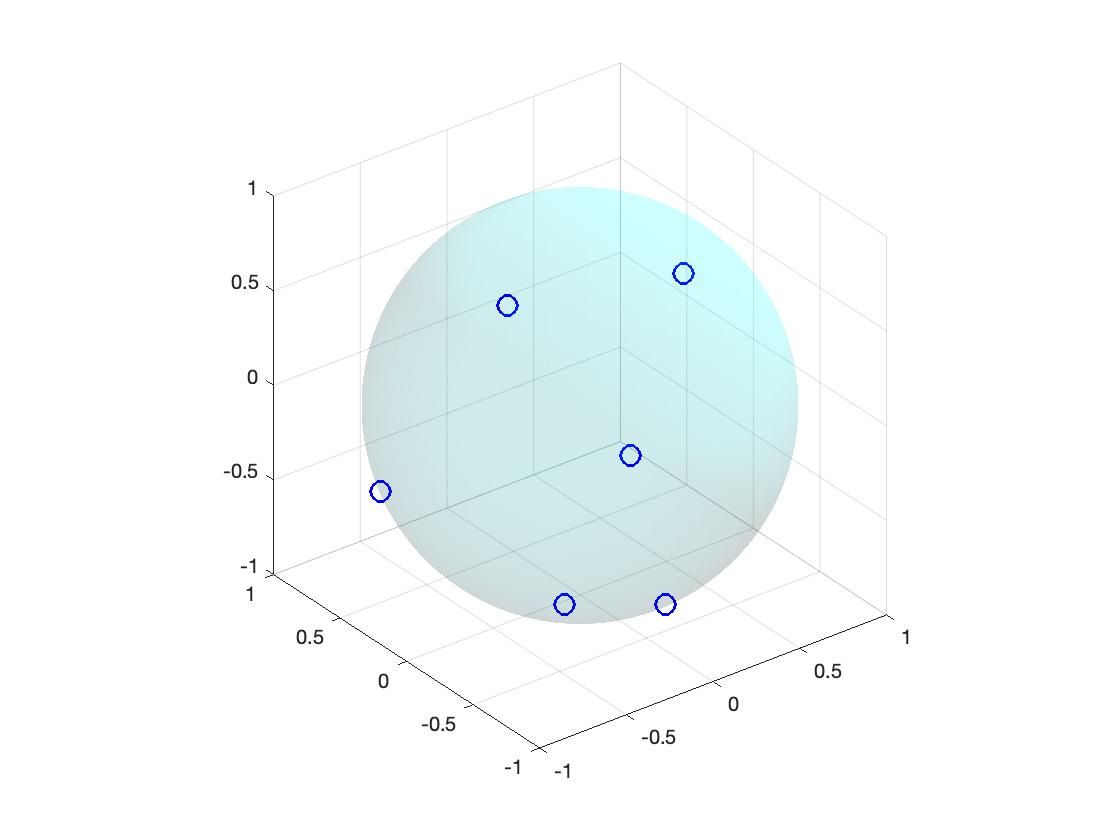}}
  \hfill%
  \subcaptionbox{  Low-resolution information in longitude-latitude.  \label{fig12:b}}{\includegraphics[width=2.3in]{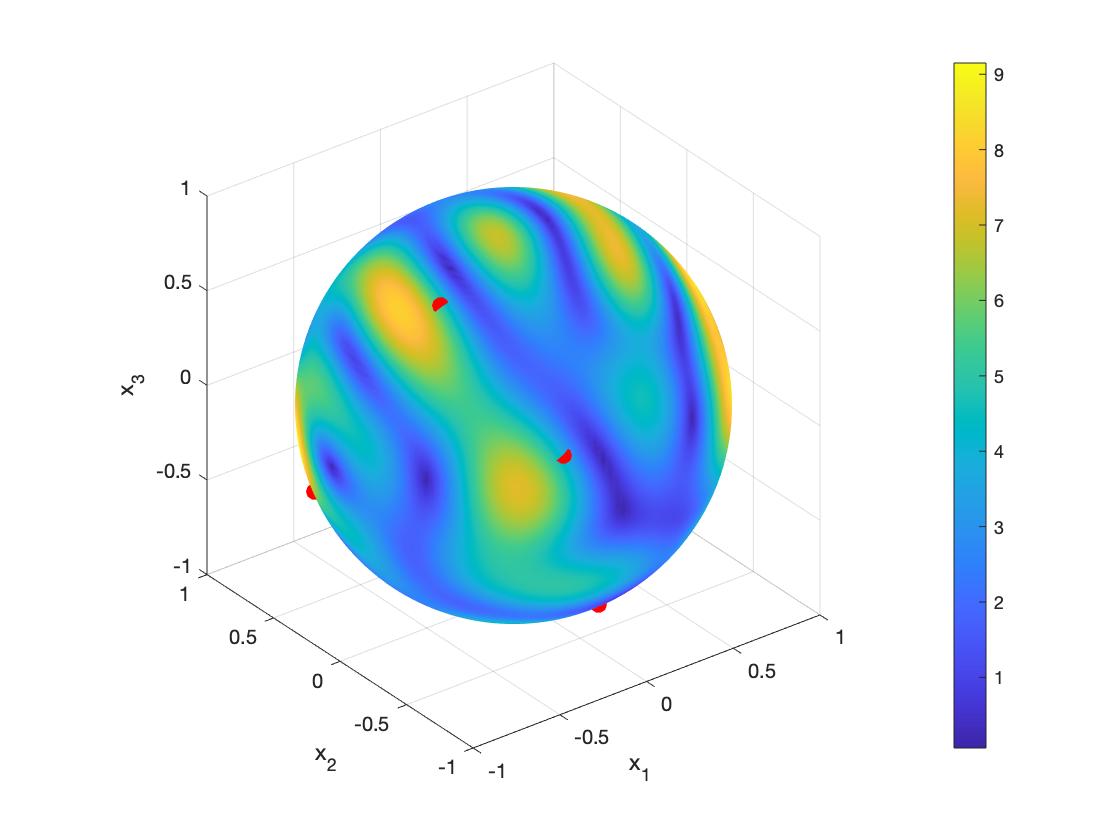}}%
  
   \bigskip
   
   \subcaptionbox{ Low-resolution information in longitude-latitude. \label{fig21:a}}{\includegraphics[width=2.3in]{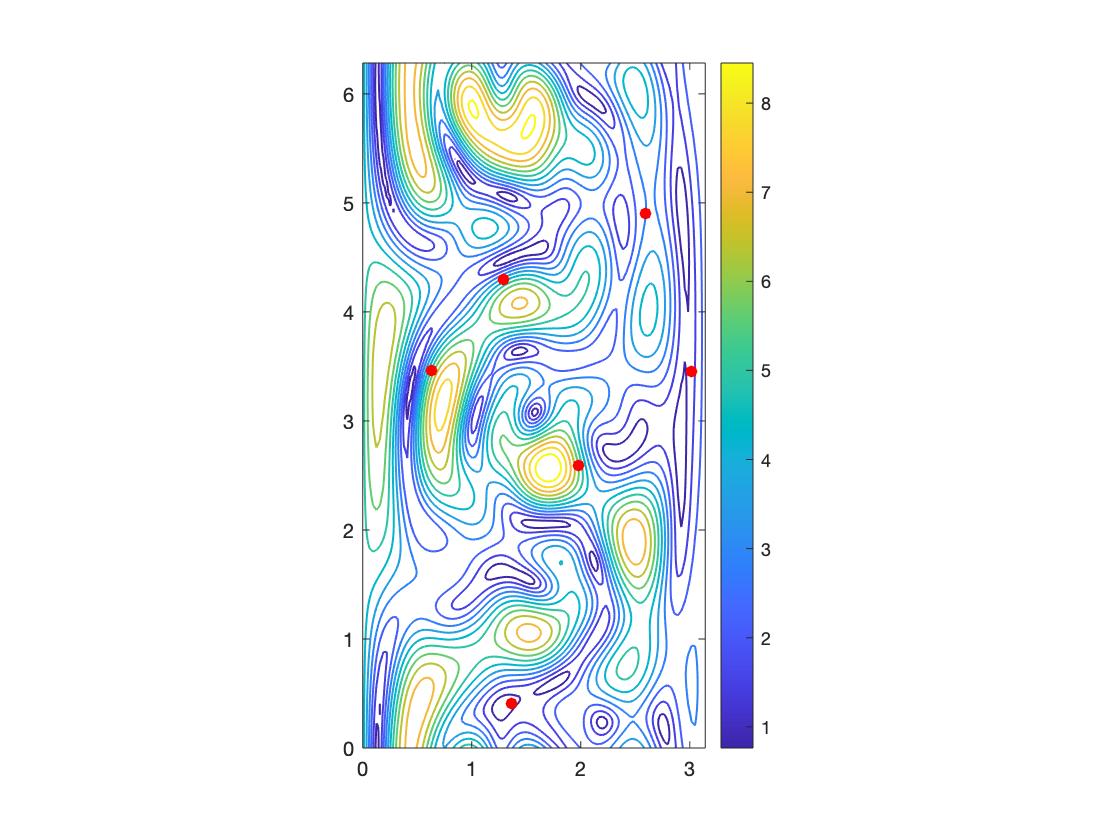}}
  \hfill%
  \subcaptionbox{  Solution $f^\star$ of \eqref{dual_problem_con} on $\St$.  \label{fig22:b}}{\includegraphics[width=2.3in]{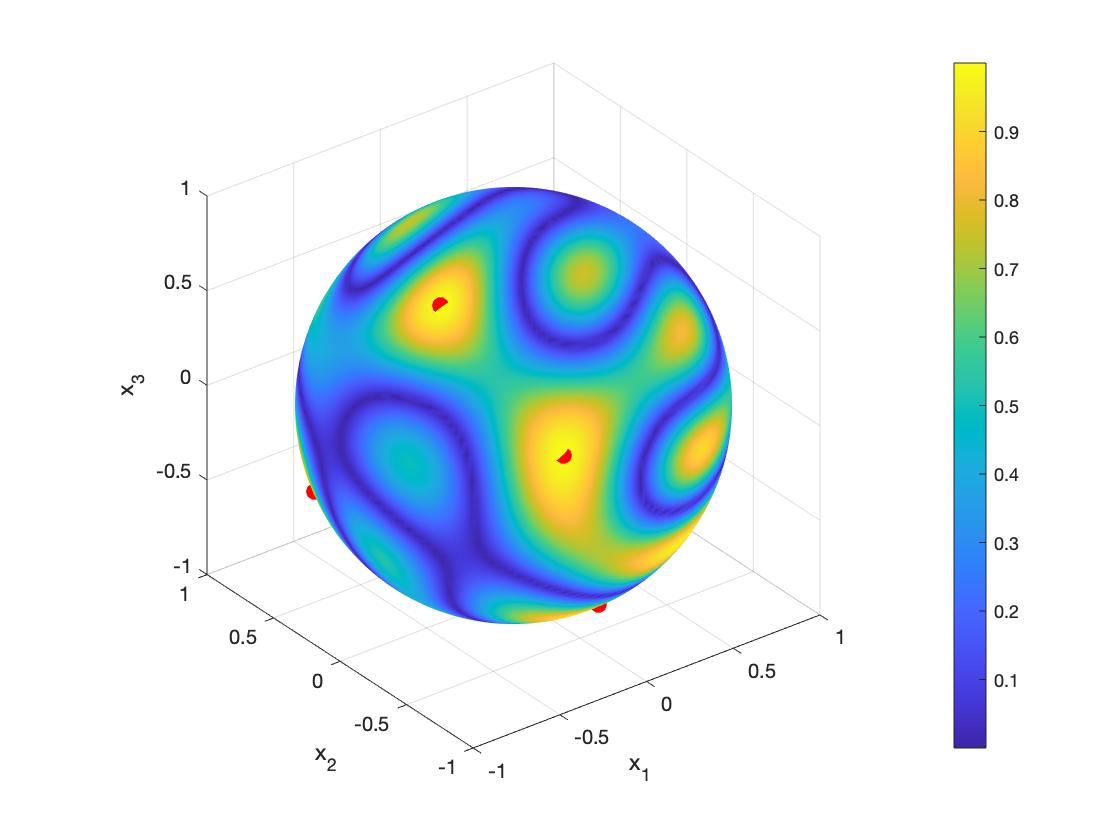}}%
   \bigskip
   
    \subcaptionbox{ Solution $f^\star$ of \eqref{dual_problem_con} in longitude-latitude.  \label{fig311:a}}{\includegraphics[width=2.3in]{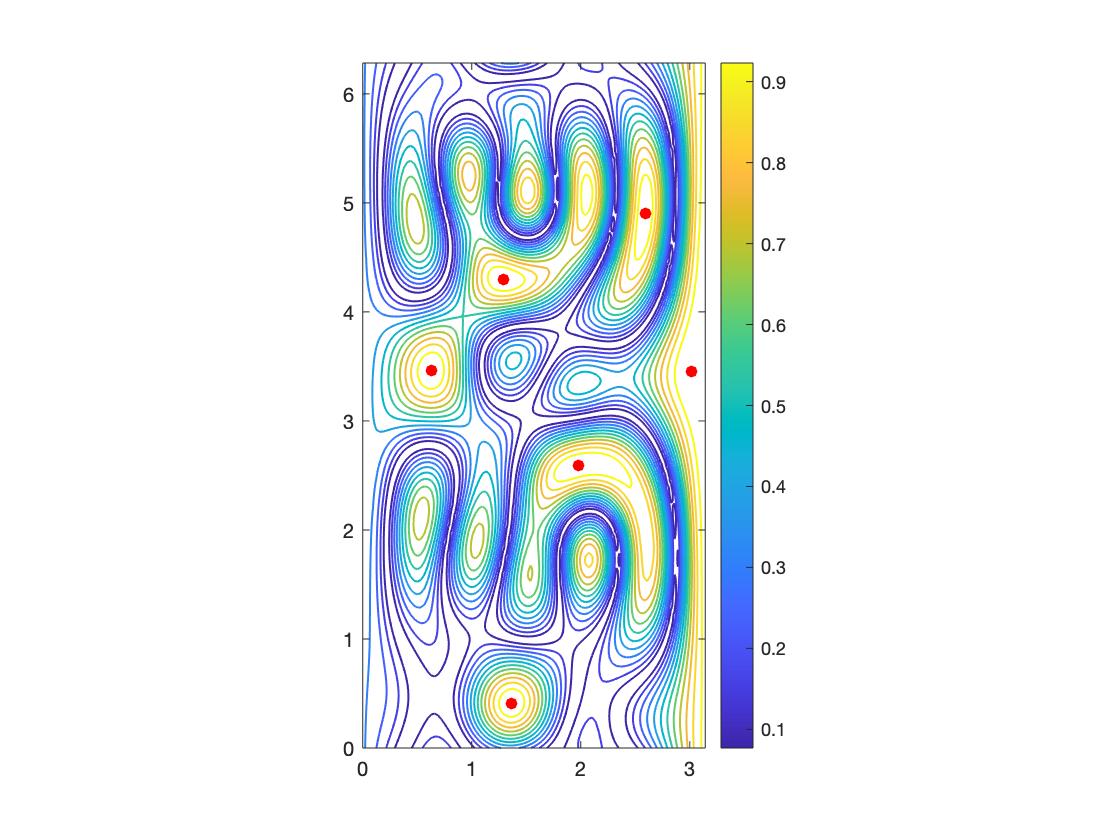}}
  \hfill%
  \subcaptionbox{  Recovered support of $\mu^\star$.  \label{fig321:b}}{\includegraphics[width=2.3in]{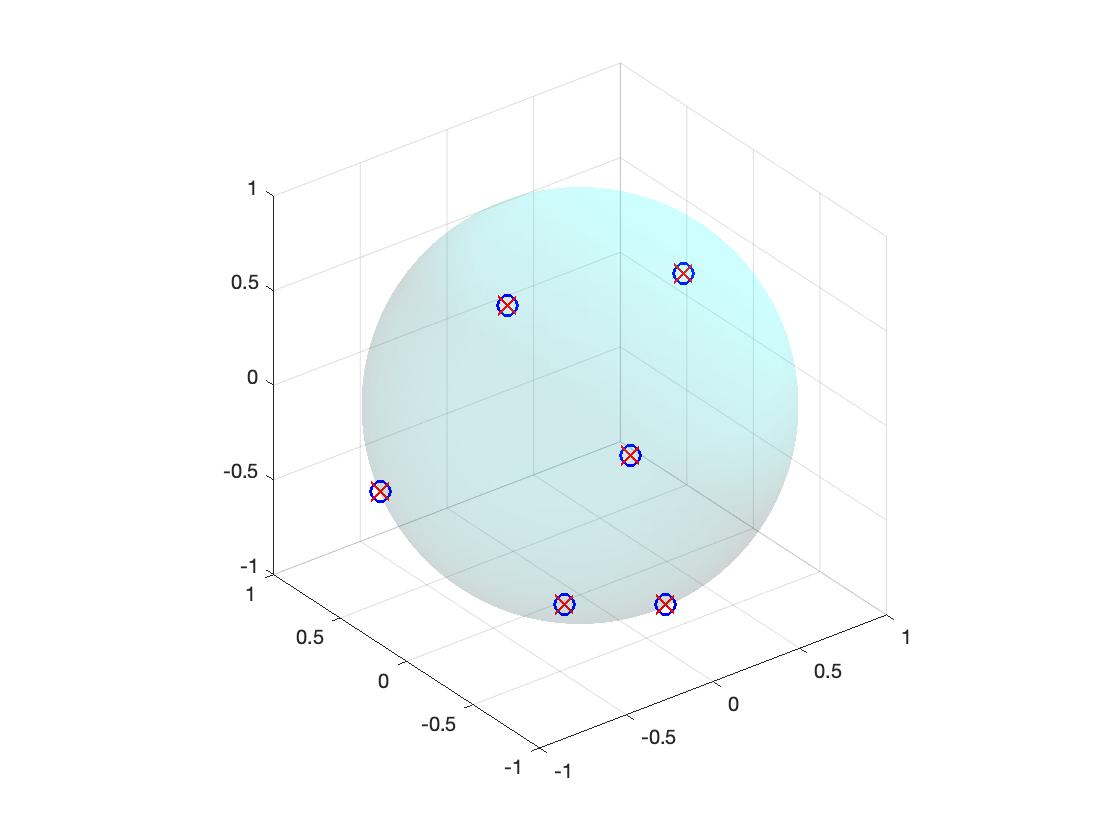}}%

  \caption{ \it  Reconstruction scenario of test measure $\mu^\star$ given in Tab.\,\ref{Tab:Tcr}. The true support  $x_i$ is marked with blue circles and the reconstructed support $x_i^{rec}$ is indicated by small red crosses. }
  
 \label{fig:num:1}
\end{figure}

\clearpage

\textbf{Experiment 2} [Super-resolution constant]. 
From the theoretical point of view, see Corollary \ref{solution_coeff_bounds_cor}, a sufficient criteria for measure recovery is that the support points $\{x_i\}$ satisfy the  separation condition  
$\min\limits_{i\ne j} d(x_i,x_j)\ge \frac{\nu}{N},$ for $\nu=19.2\pi$ and $N\ge 20$. Nevertheless, from numerical point of view the minimal separation distance may be much smaller.   To investigate this issue, we proceed as follows.  Defining for $i=1,\dots, 20$ the values $\gamma_i= i/10$, we generate twenty two-point sets $M_{ij}$ on the sphere that enjoy the separation distance with in the range $[\gamma_i-0.05,\gamma _i+0.05]$ for every $j=1,\dots, 20$.  Each of obtained sets $M_{ij}$ is considered as support of some measure $\mu_{ij}^\star$, while the amplitudes of $\mu_{ij}^\star$ are chosen randomly in the rage $[-1,1]\setminus\{0\}$. 
Then Algorithm~1 with $P=20000$ initial points is applied individually to each test measure $\mu_{ij}^\star$. 

\begin{figure}[ht!]
\centering
	\includegraphics[width=3.5in]{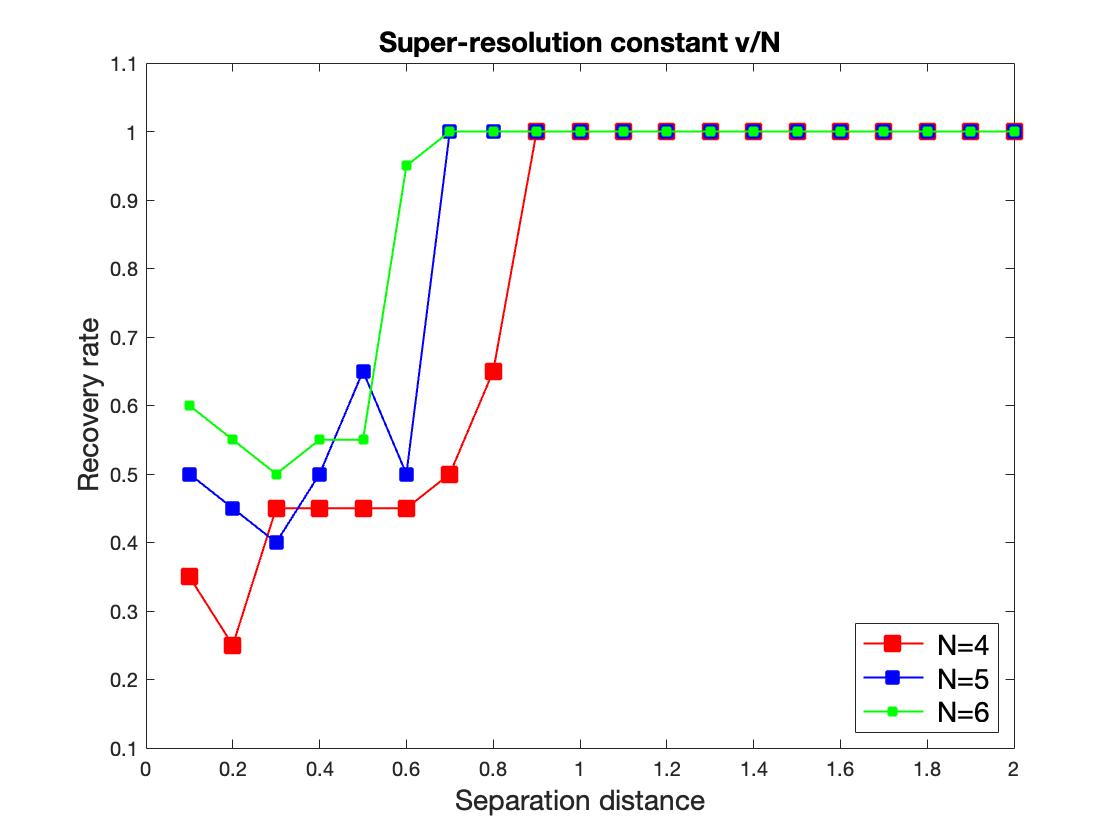} 
	\caption{\small Results of Experiment 2.  }
		\label{fig:SRcont}
\end{figure}

The operation of the algorithm is considered as successful, i.e. the support of the corresponding measure is considered as recovered, provided that the maximal recovery error fulfills 
$
   \epsilon_{x} < 1.056\cdot 10^{-4}. 
$
Thereafter, we counted the number of successful runs per separation distance $\gamma_i$.  A value $\gamma_i$ is considered as the numerical super-resolution constant once all algorithm runs were successful.
 As experiments show, see in Fig.~\ref{fig:SRcont}, the numerical super-resolution constant  $\nu_{{num}}$ is located within the interval~$[1.15 \pi, 1.33 \pi]$. 
 
~\

\textit{Remark}. When one of the measure support points has spherical coordinates $(0,0)$, some difficulties by locating zeros  of  $1-|f^*|^2$ have been observed. This might be avoided by using  some other solver instead of \texttt{fminunc}. 

~\

\textbf{Experiment 3} [Noisy data].
Assume the noise data scenario, i.e. the obtained low-resolution information of the measure $\mu^\star\in M(\St)$ is of the form 
\begin{equation}
y=\mathcal{P}_N^\star(\mu^\star+\varepsilon)
\end{equation}
with for some random (and independent of $k$ and $n$) noise term $\varepsilon$. Here we consider the deterministic noise model assuming that  $\|\mathcal{P}_N\varepsilon\|_{L_{2}}\le\delta$ for some $\delta\in \R$. 
  The optimization problem  \eqref{eq:1:3}, in this case, surely requires a regularization techniques in order to provide meaningful results. To this purpose, we consider the corresponding Thikonov-type problem 
\begin{equation}\label{eq:Ex3:TRPn}
\min_{\mu\in M(\S^2)} \frac{1}{2}\| \mathcal{P}_N^\star(\mu^\star+\varepsilon-\mu)\|_{L^{2}(\St)}+\tau\|\mu\|_{TV}
\end{equation}
and its semi-definite relaxation 
\begin{equation}\label{eq:Ex3:TRPn:relax}
\max\limits_{f\in\Pi_N(\S^2)} \mathrm{Re}\lr{f,\mathcal{P}_N^\star(\mu^\star+\varepsilon)}- \tau\|f\|_{L_{2}(\St)} \quad\text{ subject to } \; \|f\|_{\infty}\le1. 
\end{equation}
Following the same idea as in Subsection \ref{subsec:5:1}, it can be shown that  \eqref{eq:Ex3:TRPn:relax} can be represented as the next finite-dimensional optimization program
\begin{equation}\label{eq:Ex1:3}
\max\limits_{f\in\Pi_N(\S^2), \, Q} \mathrm{Re}\lr{f,\mathcal{P}_N^\star\mu^\star}- \tau\|f\|_{L_{2}(\St)} \quad\text{ subject to\   \eqref{finite_dim_const}} \tag{dRP$_{\tau}$}
\end{equation}
which we use in this experiment  instead of the ordinary optimization program in Algorithm 1. 

To showcase the reconstruction process in the noise corruption scenario we consider the setting of Experiment 1, but in this case, the data are perturbed by different levels of noise. $\delta\in \{10^{-j}\}_{j=1}^{3}$.  Figure \ref{fig:num:3}  and Table \ref{Tab:num:2} illustrate the results of the experiment.

\begin{table}
\centering
\begin{tabular}{| c | c| c |c | } \hline

{Noise level $\delta$ }                             & \quad  \quad  \quad$\epsilon_{x}$   \quad \quad \quad & \quad  \quad  \quad $\epsilon_{c}$ \quad  \quad  \quad& $ \quad  \tau  \quad$  \\ \cline{1-4}   

\multicolumn{4}{|c|}{$N=6$ } \\ \cline{1-4}  
$10^{-3}$ &  $0.003281$   &  $0.002101$   &  ${10^{-2}}$    \\  \cline{1-4}  

$10^{-2}$ &   0.038025  &   0.029545  &   $ 10^{-1}$   \\   \cline{1-4}  

$10^{-1}$&  0.927668  & 0.909458&   $10^{-1} $ \\  \cline{1-4}  

\multicolumn{4}{|c|}{$N=5$ } \\ \cline{1-4}  

$10^{-3}$&  0.014631  & 0.006691 & ${10^{-2}}$    \\  \cline{1-4}  
$10^{-2 }$& 0.180824   &   0.091935 &  $10^{-1} $  \\  \cline{1-4}  

$10^{-1}$ & 0.941574& 0.762983 &   $10^{-1} $ \\  \cline{1-4}  
 \hline
 
\multicolumn{4}{|c|}{$N=4$ } \\ \cline{1-4}  
  
$10^{-3}$& 0.019439&  0.020083 &  ${10^{-2}}$    \\ \cline{1-4}  

$10^{-2}$&  0.448509  & 0.331496  &${10^{-1}}$  \\ \cline{1-4}  

$10^{-1}$&  1.154241  & 0.517158 &   ${10^{-1}}$   \\ \cline{1-4}  
 \hline
\end{tabular}

\vspace{2mm}

\caption{Recovery error for the test measure  $\mu^\star$ via  the regularized optimization  problem \eqref{eq:Ex1:3} given the low-frequency information for  $N=4,5,6$. }
 \label{Tab:num:2}
\end{table}

 \begin{figure}[ht!]
 \centering
 
  \hspace*{\fill}%
  
  \subcaptionbox{Solution $f^\star$ of \eqref{dual_problem_con} on $\St$.   \label{fig31:a}}{\includegraphics[width=2.3in]{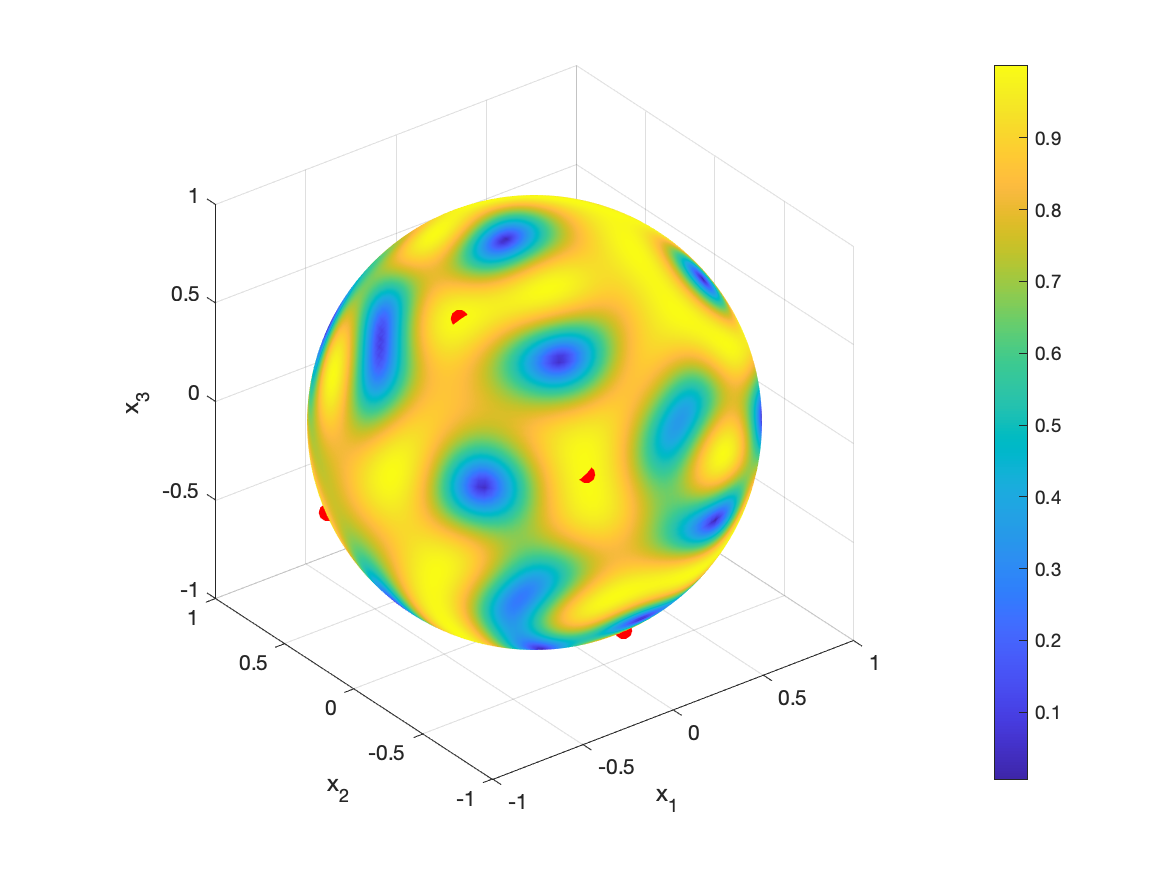}}
  \hfill%
  \subcaptionbox{  Support of $\mu^\star$ recovered via \eqref{dual_problem_con}.         \label{fig32:b}}{\includegraphics[width=2.3in]{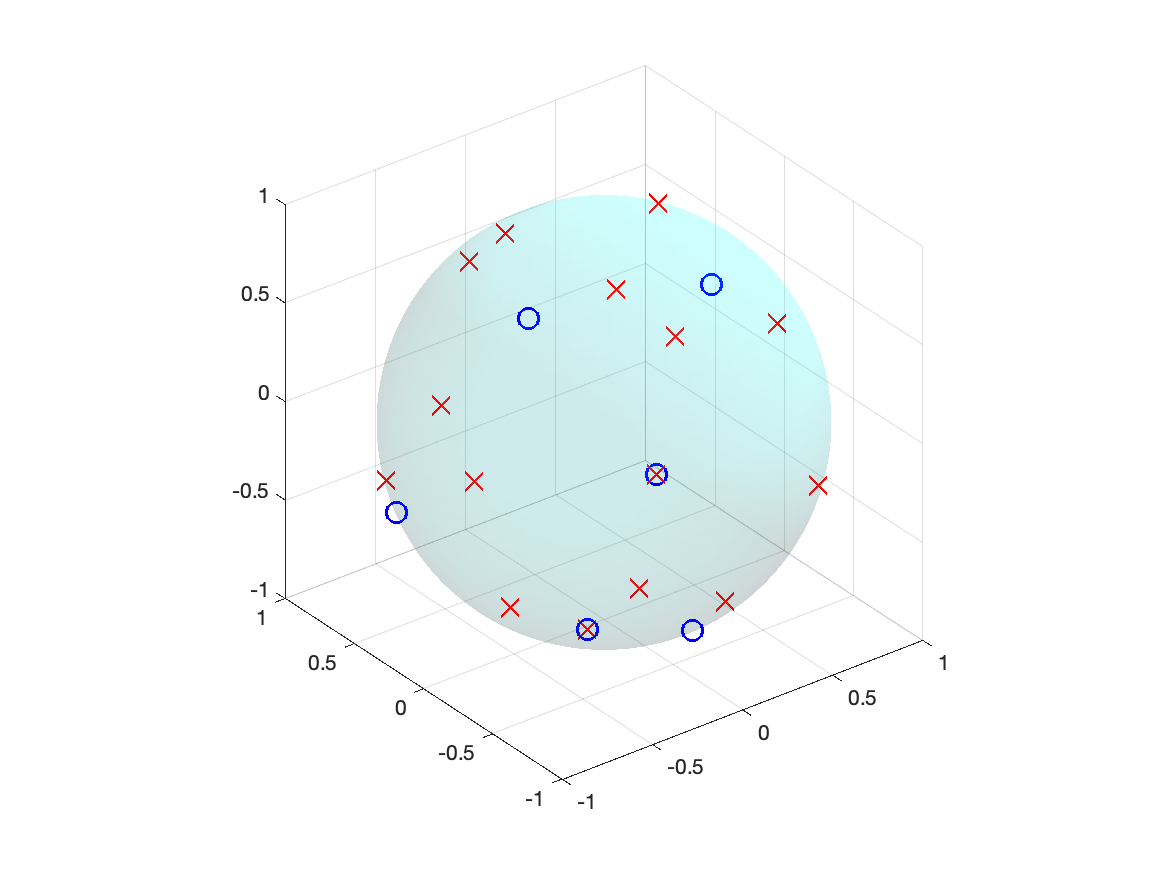}}%
 \centering
   
   \subcaptionbox{ Solution $f^\star$ of \eqref{eq:Ex1:3} on $\St$.    \label{fig33:a}}{\includegraphics[width=2.3in]{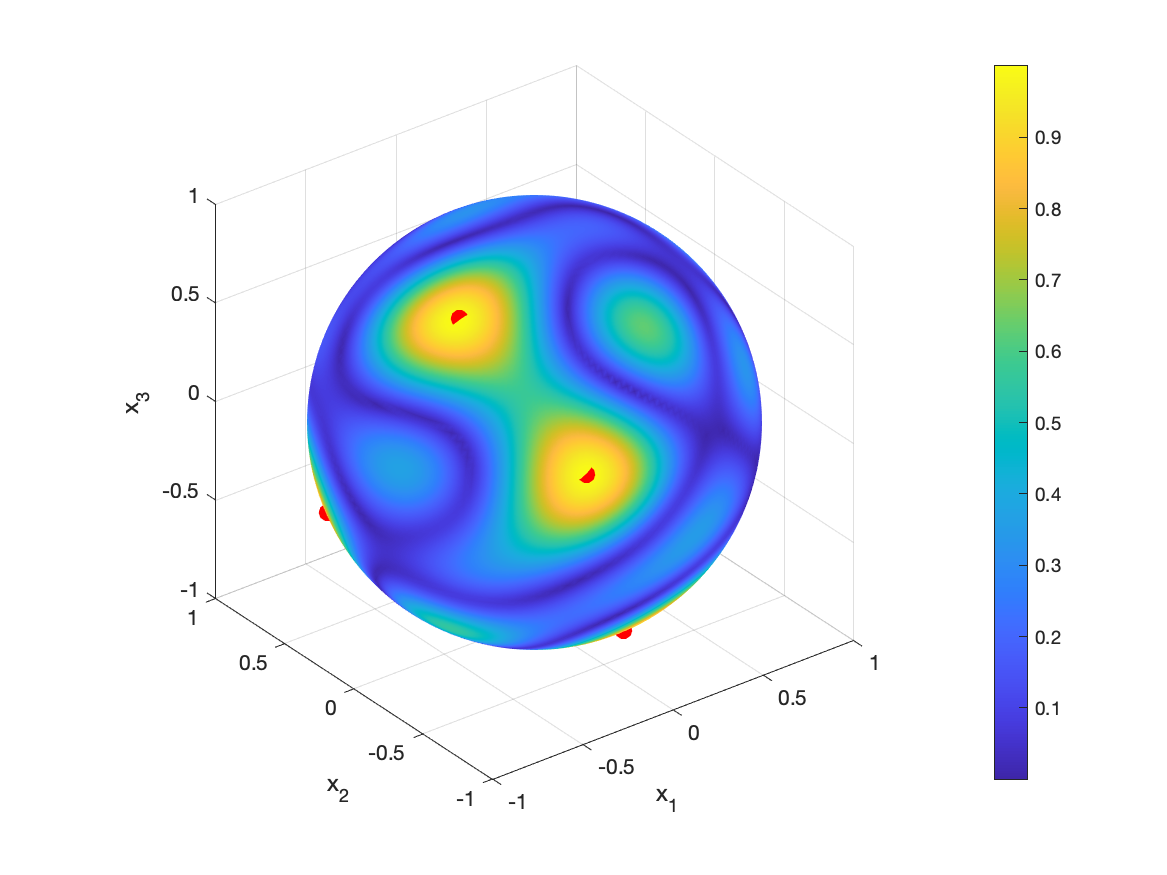}}
  \hfill%
  \subcaptionbox{ Support of $\mu^\star$ recovered via \eqref{eq:Ex1:3}.     \label{fig34:b}}{\includegraphics[width=2.3in]{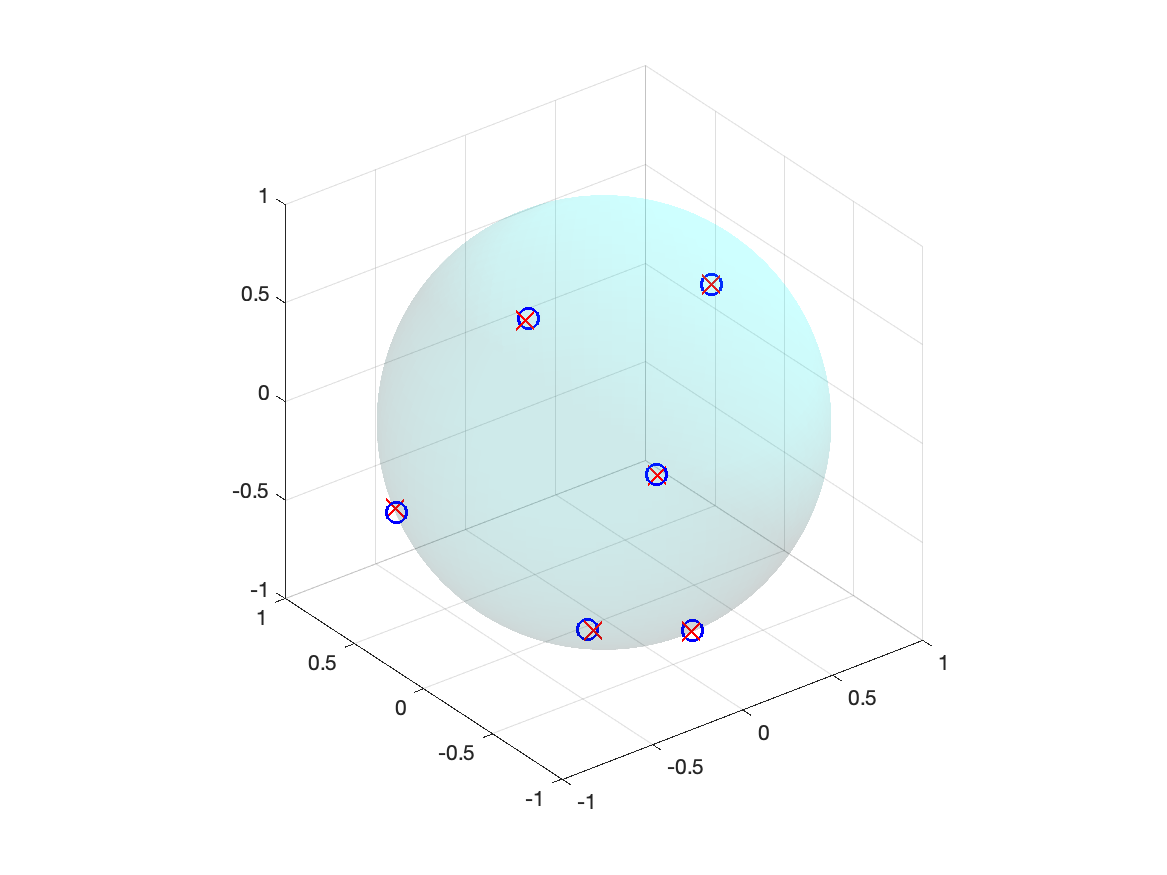}}%

  \caption{ \it Reconstruction of the test measure $\mu^\star$ given the low-resolution information up to $N=6$ corrupted by deremninistic noise of level $\delta=10^{-2}$.  The true support  $x_i$ is marked with blue circles and the reconstructed support $x_i^{rec}$ is indicated by small red crosses. }
  
 \label{fig:num:3}
\end{figure}

~\

\subsubsection{Discretization of Semi-Definite Program}

~\

\textbf{Experiment 1}{ [Grid size].} To showcase the performance  of Algorithm 2, 
here we consider the discrete measure $\mu^{\star}$ defined in Table~\ref{Tab:Tcr}  as a test measure once more, assuming  the low-frequency information of $\mu$ up to $N=6$
is given. 
Choosing different sizes $n\in \N$ of the grid  $\G_n$ and thresholding with $thresh=0.1$, Algorithm 2 is applied to recover the support of the mass. The results of such experiments are plotted in Fig. \ref{Fig:disc:rec} and the corresponding recovery error is presented in Table~\ref{Tab:discr:error}. 

As expected, with increasing grid size the point clusters become denser around the true support of $\mu^{\star}$. This, in turn, leads to a good support approximation after when applying the kernel density estimator procedure. And the recovery error decreases with the grid size $n$ grows.

\begin{table}[ht]

\vspace{15mm}

\centering
\begin{tabular}{| c|| c| c | }
 \hline
\quad Grid size $n$ \quad  &   \quad   \quad$ \quad \epsilon_{x}$ \quad   \quad  \quad  & \quad   \quad  \quad$\epsilon_{c}$  \quad  \quad \quad  \\ 
 \hline
20 & 0.568226 & 
 1.923598  \\ 
 \hline
40& 0.020896 & 0.018803\\
 
 \hline
 
80 & 0.008867 & 0.014393 \\ 

 \hline
\end{tabular}

\vspace{2mm}

\caption{Recovery error for different grid size.}
 \label{Tab:discr:error}
\end{table}

\begin{figure}[ht!]
  \hspace*{\fill}%
  
  \subcaptionbox{For $n=20$, support clusters. \label{fig102:a}}{\includegraphics[width=2.3in]{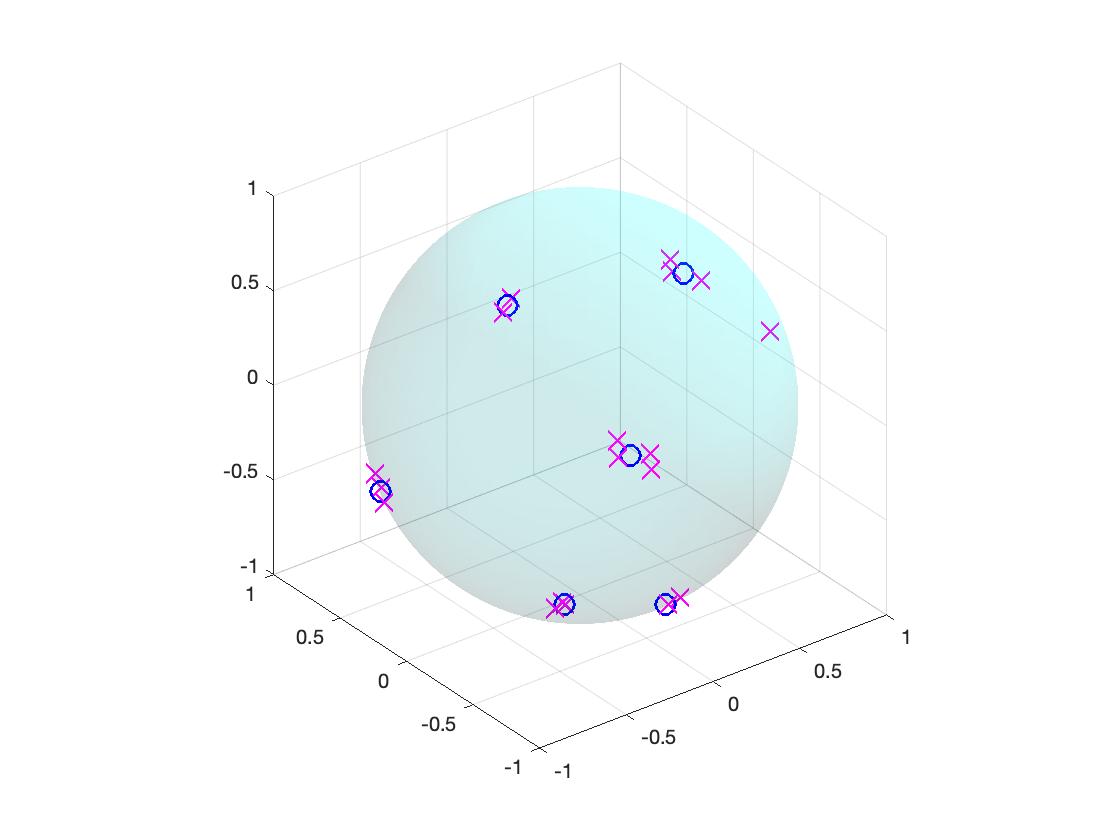}}
  \hfill%
  \subcaptionbox{ For $n=20$, recovered support. \label{fig112:b}}{\includegraphics[width=2.3in]{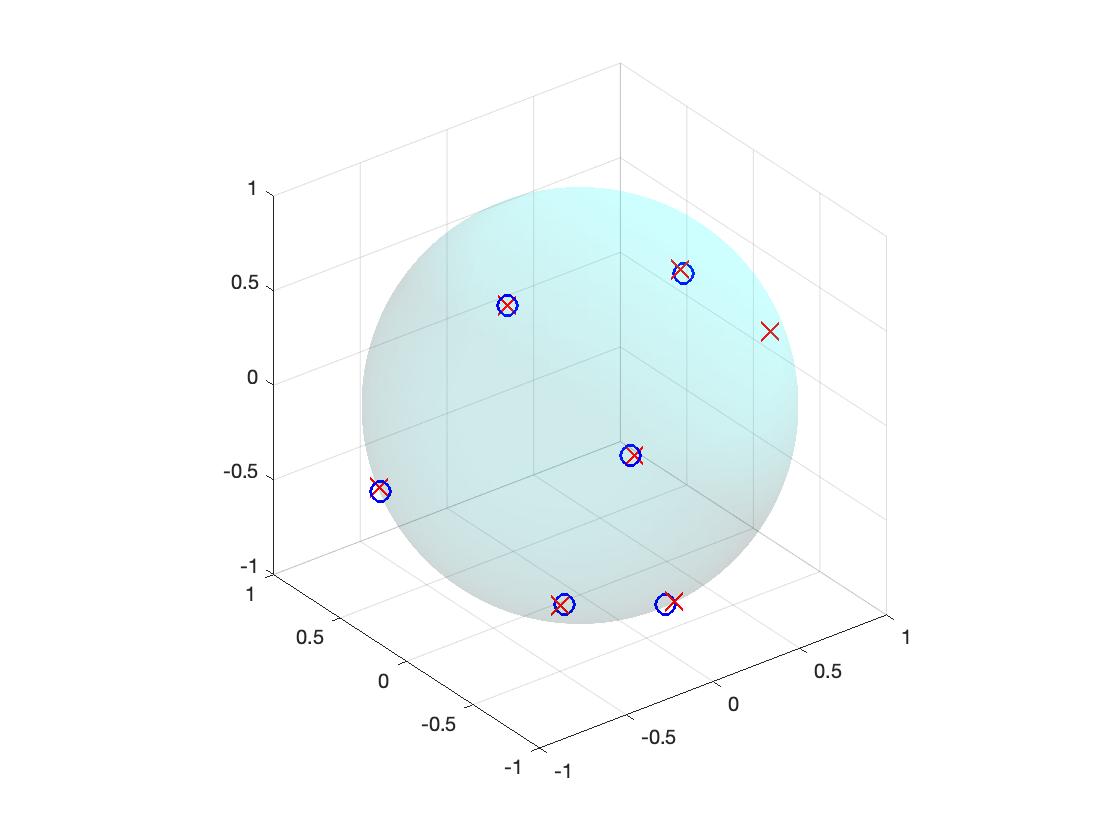}}%
   \bigskip
   
   \subcaptionbox{ For $n=40$, support clusters. \label{fig122:a}}{\includegraphics[width=2.3in]{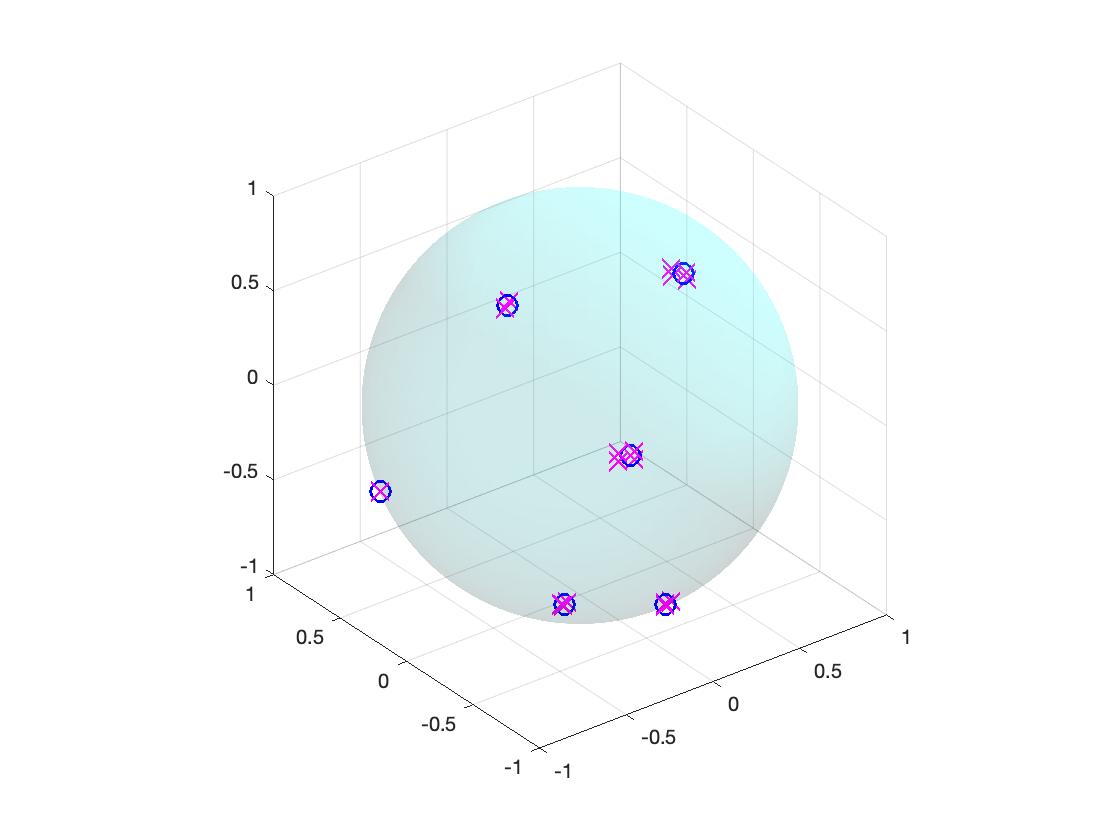}}
  \hfill%
  \subcaptionbox{  For $n=40$, recovered support. \label{fig132:b}}{\includegraphics[width=2.3in]{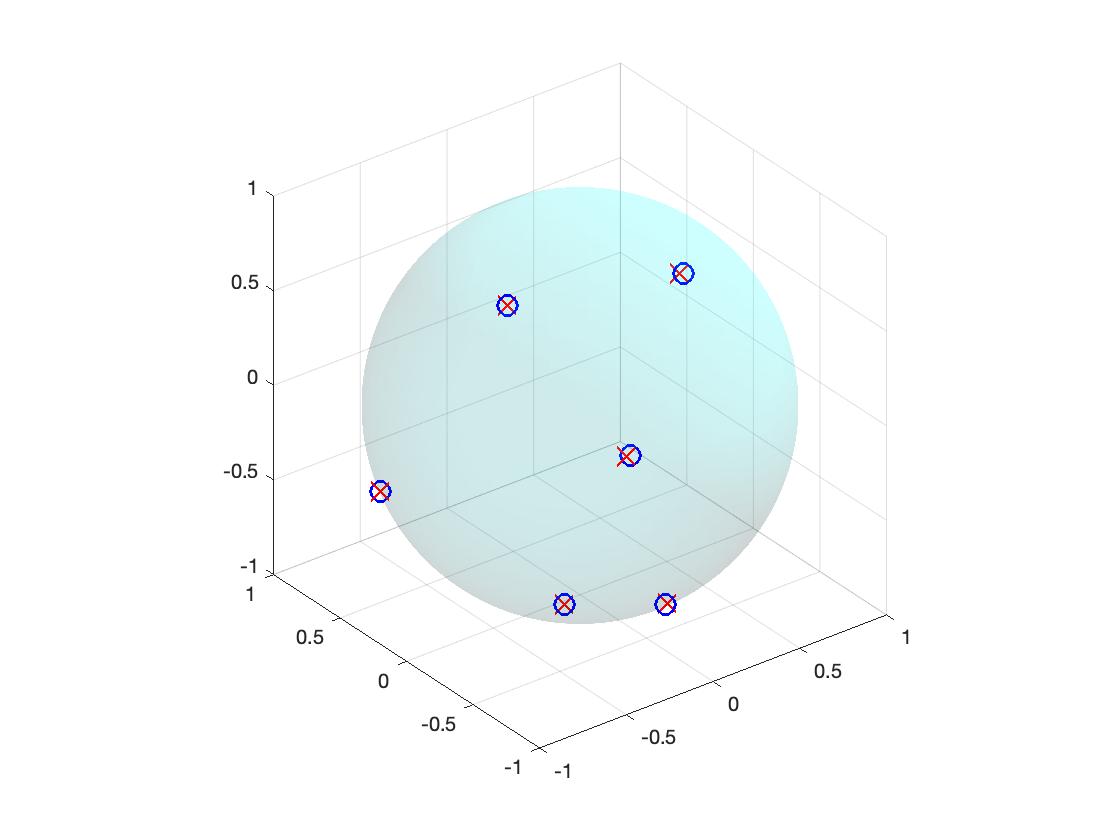}}%
   \bigskip
   
    \subcaptionbox{ For $n=80$, support clusters.  \label{fig142:a}}{\includegraphics[width=2.3in]{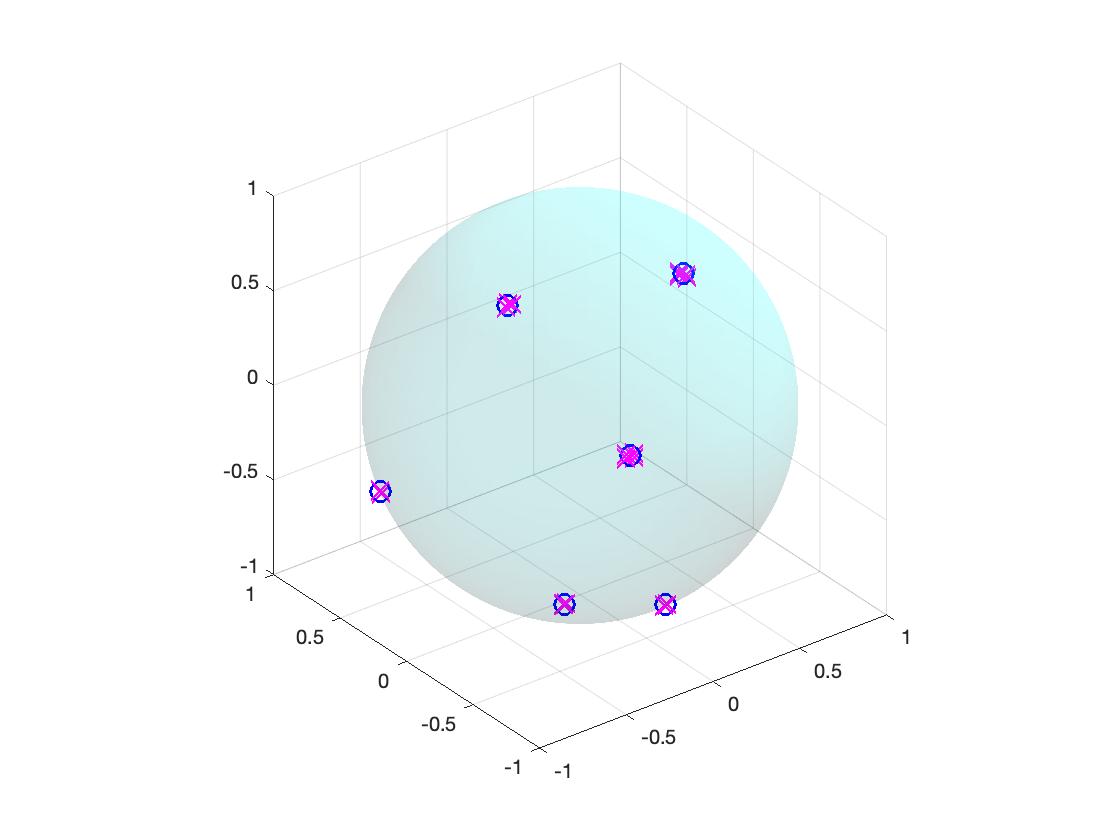}}
  \hfill%
  \subcaptionbox{ For $n=80$, recovered support.  \label{fig152:b}}{\includegraphics[width=2.3in]{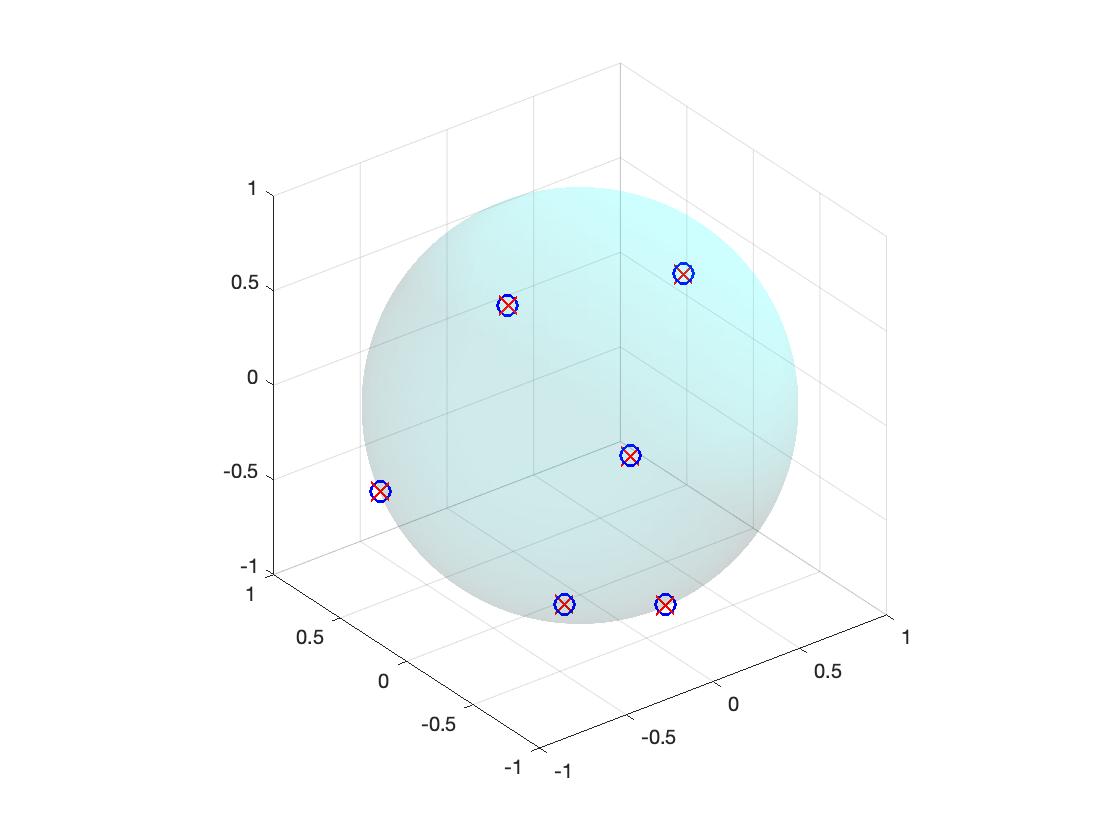}}%
  
  \caption{Performance of Algorithm 2 for different grid sizes. The true measure support  is marked with blue circles, clustered solutions are denoted by magenta crosses and the reconstructed support is indicated by red crosses.  }
  
  \label{Fig:disc:rec}
\end{figure}

\clearpage


\clearpage

\appendix
\section{Proof of Lemma~\ref{lemma_1} }\label{Lemma_1}

\begin{proof}
For the asymptotic estimates, we will use the property 
\begin{equation}
    |\sin(\w/2)|\ge \frac{|w|}{\pi}\quad \text{for}\; \w \in [-\pi,\pi], 
\end{equation}
and component-wise estimates of the expressions $|\w|\le\frac{\pi}{4(n+1)}$. The estimate for the kernel itself  follows immediately. For the first derivative, we have the following representation 
\begin{equation*}
    \widetilde{J}'_N(\w)= 2 \widetilde{F}_n(\w)\widetilde{F}'_n(\w),
\end{equation*}
where the second multiplier reads as 
\begin{equation*}
\begin{split}
    \widetilde{F}'_n(\w)&= \frac{1}{2(n+1)^2\sin^2(\w/2)}\left((n+1)\sin((n+1)\w)\right.\\
    &\left.- \frac{2\cos(\w/2)\sin^2((n+1)\w/2)}{\sin(\w/2)}\right) \\
    &= \frac{1}{2(n+1)^2\sin^2(\w/2)}\Big((n+1)\sin((n+1)\w)\\
    &-2\cos(\w/2)\sin((n+1)\w/2)U_n(\cos(\w/2)) \Big)
\end{split}
\end{equation*}
and  $U_n$ denotes the $n$-th order Chebychev polynomial of the second kind. Due to   $\|U_n\|_\infty=n$, we get 
\begin{equation*}
    |\widetilde{F}'_n(\w)|\le \frac{1.5}{(n+1)\sin^2(\w/2)}.
\end{equation*}
Since the Fej\'{e}r kernel can be written as 
\begin{equation*}
    \widetilde{F}_n(\w)=\frac{1}{(n+1)}\left(1+ 2\sum_{k=1}^n\left(1-\frac{k}{n+1}\right)\right)\cos(k \w),
\end{equation*}
we easily get $\widetilde{F}'_n(\pi)=0$ and therefore $\widetilde{J}'_N(\pi)=0$. Moreover, we have 
\begin{equation*}
    \begin{split}
    \frac{\widetilde{F}'_n(\w)}{\sin \w}&= \frac{1}{2(n+1)^2\sin^2(\w/2)} \left((n+1)\frac{\sin((n+1)\w)}{\sin(\w)}\right.
    \\
    & \left.- \frac{2\cos{(\w/2)}\sin^2((n+1)\w/2)}{\sin(\w)\sin(\w/2)}\right), \\
    &= \frac{1}{2(n+1)^2\sin^2(\w/2)}\left( (n+1)U_n(\cos(\w))-(n+1)^2 \widetilde{F}_n(\w)\right).
   \end{split}
\end{equation*}
Then the boundedness  $\|\widetilde{F}_n\|_\infty=1$ provides the estimates  
\begin{equation*}
    \left| \frac{\widetilde{F}_n(\w)}{ \sin(\w)}\right|\le \frac{1}{\sin^2(\w/2)}  \quad  \quad \text{and}   \quad \quad \lim\limits_{\w\to 0} \frac{\widetilde{F}'_n(\w)}{\sin(\w)}= \widetilde{F}''(0). 
\end{equation*}
For $|\w|\le \frac{\pi}{4(n+1)}$, it can be shown that 
\begin{equation*}
    k^2 \cos(k\w)- \frac{k\sin(k\w)\cos(\w)}{\sin(\w)}\le 0, 
\end{equation*}
which leads to the inequality 
\begin{align*}
    \left| k^2-\frac{k\sin(k\w)\cos(\w)}{\sin(\w)}\right|&= k^2-  \frac{k\sin(k\w)\cos(\w)}{\sin(d(x,z))} \\
    &\le k^2(1-\cos(k \w)) \le k^4\frac{\w^2}{2}.
\end{align*}
Since, the first derivative of the Jackson kernel can be written as 
\begin{equation*}
    \widetilde{J}'_N(\w)= \frac{1}{(n+1)^2}\left(-2\sum\limits_{k=1}^{2n}c_k k\sin(k\w)\right)
\end{equation*}
with positive Fourier coefficients $c_k$, component-wise estimation shows  that 
\begin{equation*}
    \left|\widetilde{J}''_N(0)-\frac{\cos(\w) \widetilde{J}'_N(\w)}{\sin(\w)}\right|\le \frac{\widetilde{J}^{(4)}_N(0)}{2}|\w|^2
\end{equation*}
In the same way, one derives 
\begin{equation*}
    |\widetilde{J}''_N(0)-\widetilde{J}''_N(\w)|\le \frac{\widetilde{J}^{(4)}_N(0)}{2}|\w|^2. 
\end{equation*}
For the function $G_2$, we have  the following representation  
\begin{equation*}
\begin{split}
     G_2(\w)&=\widetilde{J}''_N(\w)- \frac{\widetilde{J}'_N(\w)\cos(\w)}{\sin{(\w)}}\\
     &= 2\left( \left(\widetilde{F}'_n(\w)\right)^2
      + \widetilde{F}_n(\w)\left( \widetilde{F}''_n(\w)- \frac{\widetilde{F}'_n(\w)\cos(\w)}{\sin(\w)}\right)\right).
\end{split}
\end{equation*}
Since the second derivative of $\widetilde{F}_n$ reads as   
\begin{equation*}
    \begin{split}
       \widetilde{F}''_n(\w)&=\frac{1}{2(n+1)^2\sin^2(\w/2)}\Big((n+1)^2\Big((2+\cos(\w))\widetilde{F}_n(\w)+ \cos((n+1)\w)\Big)\\
   & - 2(n+1)(1+\cos(\w)U_n(\cos(\w))\Big),  
    \end{split}
\end{equation*}
then we get 
\begin{equation}
\begin{aligned}
    \widetilde{F}''_n(\w)- \frac{\widetilde{F}'_n(\w)\cos(\w)}{\sin(\w)}
 &= \frac{1}{2(n+1)^2\sin^2(\w/2)}\left((n+1)^2\Big(2(1+\cos(\w))\widetilde{F}_n(\w) \right. \\
   & \left.+\cos((n+1)\w)\Big) - (n+1)\Big(2+3\cos(\w)U_n(\cos(\w))\Big)\right)\\
    &= \frac{1}{2(n+1)^2\sin^2(\w/2)}\left( (n+1)^2\left(2(1+\cos(\w) \right)\cdot\right.\\
    &\left( \widetilde{F}_n(\w)- \frac{U_n(\cos(\w))}{n+1}\right)+ (n+1)^2\cos((n+1)\w) \\
    &\left.- (n+1)\cos(\w)U_n(\cos(\w))\right).
\end{aligned}
\end{equation}
Using the above derived representations yields the following estimate
\begin{equation*}
\left| G_2(\w) \right| =\left| \widetilde{J}''_N(\w) - \frac{\widetilde{J}'_N(\w)\cos(\w) }{\sin(\w)}\right|\le \frac{14.5 \cdot \pi^4}{(n+1)^2|\w|^2}
\end{equation*}
and similar result for the second derivative
\begin{equation*}
    \left| \widetilde{J}''_N(\w) \right|\le\frac{12.5 \cdot \pi^4}{(n+1)^2|\w|^2}. 
\end{equation*}

For $|\w|\le \frac{\pi}{4(n+1)}$, the estimate follows by estimating component-wise in the trigonometric representation, namely
\begin{equation*}
   \left| G_2(\w) \right| =\left| \widetilde{J}''_N(\w) - \frac{\widetilde{J}'_N(\w)\cos(\w) }{\sin(\w)}\right|\le \frac{\widetilde{J}^{(4)}(0)}{2}w^2. 
\end{equation*}
Furthermore,  at $w=0$ we have
\begin{equation*}
    \widetilde{J}''_N(0)= 2\widetilde{F}''_N(0)= -\frac{4}{(n+1)}\sum\limits_{k=1}^n\left(k^2-\frac{k^3}{(n+1)}\right)=\frac{n(n+2)}{3}.
\end{equation*}
Next, we consider the expression 
\begin{equation*}
    G_3(\w)= \frac{\widetilde{J}''_N(\w)\sin(\w)-\widetilde{J}'_N(\w)\cos(\w)}{\sin^2(\w)}.
\end{equation*}
Observe, that for $\w=\pi$ this expression vanishes.  However, for $\w\ne\pi$, we have 
\begin{equation*}
\begin{split}
    \frac{\widetilde{J}''_N(\w)\sin(\w)-\widetilde{J}'_N(\w)\cos(\w)}{\sin^2(\w)}&=2\left(\left(\frac{\widetilde{F}'_n(\w)}{\sin(\w)}\right)\widetilde{F}'_n(\w)\right.\\ 
  & +  \left. \widetilde{F}_n(\w)\left( \frac{\widetilde{F}''_n(\w)\sin(\w)-\widetilde{F}'_n(\w)\cos(\w)}{\sin^2(\w)}\right)\right), 
\end{split}
\end{equation*}
with 
\begin{equation*}
\begin{split}
\widetilde{F}_n(\w) &\left(    \frac{(\widetilde{F}''_n(\w)\sin(\w)-\widetilde{F}'_n(\w)\cos(\w))}{\sin^2(\w)}\right)=\frac{\sin^2((n+1)\w/2)}{2(n+1)^4\sin^4(\w/2)}\times\\
&\left(2(n+1)^2\frac{(1+\cos{(\w)})}{\sin(\w)}\left(\widetilde{F}_n(\w)- \frac{U_n(\cos(w))}{n+1}\right) \right.\\
&  \quad \quad \left.+(n+1)\frac{(n+1)\cos((n+1)\w)-\cos(\w)U_n(\cos(\w))}{\sin(\w)}\right). 
\end{split}
\end{equation*}
Here the first  the right-hand side summand can be represented as 
\begin{equation*}
    \begin{split}
    &\left|\frac{\sin^2((n+1)\w/2)}{(n+1)^2\sin^4(\w/2)}\cdot\frac{(1+\cos{(\w)})}{\sin(\w)}\left(\widetilde{F}_n(\w)- \frac{U_n(\cos(w))}{n+1}\right) \right|\\
    &=\frac{1}{(n+1)\sin^4(\w/2)}\left| \frac{\sin^2((n+1)\w/2)}{(n+1)}\frac{\cos(\w/2)}{\sin(\w/2)} \left(\widetilde{F}_n(\w)- \frac{U_n(\cos(w))}{n+1}\right) \right|\\ 
     &=\frac{1}{(n+1)\sin^4(\w/2)}\left| \frac{\sin^2((n+1)\w/2)}{(n+1)\sin(\w/2)} \right|\times \\
    & \left|\sin((n+1)\w/2)\cos(\w/2)\left(\widetilde{F}_n(\w)- \frac{U_n(\cos(w))}{n+1}\right) \right|\\
     &= \frac{\sqrt{\widetilde{F}_n(\w)}}{(n+1)\sin^4(\w/2)} \left|\sin((n+1)\w/2)\cos(\w/2) \left(\widetilde{F}_n(\w)- \frac{U_n(\cos(w))}{n+1}\right) \right|\\
     &\le \frac{2}{(n+1)\sin^4(\w/2)}.
\end{split}
\end{equation*}
For the second summand, due to the derivative representation of Chebychev polynomials, we have
\begin{equation*}
    \frac{(n+1)\cos((n+1)\w)- \cos(\w)U_n(\cos{(\w)})}{\sin(\w)}= - \sin(\w)U'_n(\cos(\w)).
\end{equation*}
Using the Bernstein inequality for algebraic polynomials, i.e. 
\begin{equation*}
    |P_n'(t)|\le \frac{n}{\sqrt{1-x^2}}\|P_n\|_\infty, \quad -1< x<1,
\end{equation*}
 for a polynomial of degree $n$, see e.g. \cite{Bernstein1912}, we derive 
 \begin{equation*}
   \left| \frac{(n+1)\cos((n+1)\w)- \cos(\w)U_n(\cos{(\w)})}{\sin(\w)}  \right|\le n  \|U_n\|_\infty\le (n+1)^2.
 \end{equation*}
Hence, the   second summand can be estimated as  
\begin{align*}
     \left|  \frac{\sin^2((n+1)\w/2)}{(n+1)^2\sin^4(\w/2)} \cdot  (n+1) \cdot \frac{(n+1)\cos((n+1)\w)-\cos(\w)U_n(\cos(\w))}{\sin(\w)} \right|\\
     \le \frac{0.5}{(n+1)\sin^4(\w/2)}.
\end{align*}
Together, this yields 
\begin{equation*}
      \left| \widetilde{F}_n\left(    \frac{(\w)(\widetilde{F}''_n(\w)\sin(\w)-\widetilde{F}'_n(\w)\cos(\w))}{\sin^2(\w)}\right)\right|\le \frac{2.5}{(n+1)\sin^4(\w/2)}
\end{equation*}
and consequently we get an estimate for $G_3$ of the form 
\begin{equation*}
     |G_3(\w)|=  \left| \frac{\widetilde{J}''_N(\w)\sin(\w)-\widetilde{J}'_N(\w)\cos(\w)}{\sin^2(\w)}\right|\le \frac{8}{(n+1)\sin^4(\w/2)}. 
\end{equation*}
Again, for $|\w|\le \frac{\pi}{4(n+1)}$, we estimate the expression component-wise. Observe, that 
\begin{equation*}
  \left|\frac{k^2\cos(k\w)\sin(\w)-k\sin(k\w)\cos(\w)}{/sin^2(\w)} \right|  \frac{1}{1+\cos(\w)}k^3\sin(k\w)\le \frac{k^4|w|}{1+\cos(\w)}.
\end{equation*}
For this reason, we have for $|\w|\le \frac{\pi}{4(n+1)}$
\begin{eqnarray*}
  \left| \frac{\widetilde{J}''_N(\w)\sin(\w)-\widetilde{J}'_N(\w)\cos(\w)}{\sin^2(\w)}\right|& \ds 
  \le \frac{\widetilde{J}^{(4)}_N(0)|\w|}{1+\cos(\w)}\le  \frac{\widetilde{J}^{(4)}_N(0)|\w|}{1+\cos({\pi}/{(4(n+1))})}\\\
&\ds \le  \frac{\widetilde{J}^{(4)}_N(0)|\w|}{1+\cos(\pi/8)}\le 0.52 \cdot\widetilde{J}^{(4)}_N(0)|\w|
\end{eqnarray*}
It can be also shown that the following inequality holds 
    \begin{equation*}
        \left|\frac{k^2\cos(k\w)\cos(\w)\sin(\w)-k\sin(k\w)}{\sin^2(\w)}\right|\le \frac{1}{1+\cos(\w)}k^3\sin(k\w)\le \frac{k^4|\w|}{1+\cos(\w)}, 
    \end{equation*}
and thus 
 \begin{equation*}
     \left|\frac{\widetilde{J}'_N(\w)-\cos(\w)\sin(\w) \widetilde{J}''_N(\w)}{\sin^2(\w)}\right|\le 0.52 \cdot\widetilde{J}^{(4)}_N(0)|\w|
 \end{equation*}
 
 Similarly, we can compute the third derivative of $ \widetilde{J}_N$, that is 
 \begin{equation*}
    \widetilde{J}'''_N(\w)=2\Big(3\widetilde{F}'_n(\w)\widetilde{F}''_n(\w)+ \widetilde{F}_n(\w)\widetilde{F}'''_n(\w)\Big).
 \end{equation*}
Writing explicitly  the first and second summand, we get 
 \begin{equation*}
\begin{split}
\widetilde{F}_n(\w)\widetilde{F}'''_n(\w)= \frac{1}{(n+1)\sin^4(\w/2)}\left( - \frac{1}{2(n+1)}\cos(\w/2)\sin((n+1)\w/2) \cdot \right. \\
  U_n(\cos(\w/2)) \big((5+\cos{(\w)})\widetilde{F}_n(\w)+3\cos((n+1)\w)\big)+ \frac{1}{4}\sin((n+1)\w)\cdot\\
 \left. \big( \widetilde{F}_n(\w)(6+3\cos(\w))+\cos((n+1)\w)-1\big)\right)
\end{split}
\end{equation*}
and
\begin{equation*}
\begin{split}
    \widetilde{F}'_n(\w)\widetilde{F}''_n(\w)= \frac{1}{3(n+1)\sin^4(\w/2)}\left( -\frac{3}{2(n+1)}\cos(\w/2)\sin((n+1)\w/2)\cdot\right.\\
    U_n(\cos(\w/2))\big((2+\cos(\w))\widetilde{F}_n(\w)+3\cos((n+1)\w)+2\big)+\frac{3}{4}\sin((n+1)\w)\cdot\\
    \left. \big(\widetilde{F}_n(\w)(4+3\cos(x))+\cos((n+1)\w)\big)\right). 
    \end{split}
\end{equation*}
Then using the  triangle inequality,  we can obtain the estimate for $\widetilde{J}'''_N$, that is 
\begin{equation*}
    |\widetilde{J}'''_N(\w)|\le \frac{50.5 \cdot \pi^4}{(n+1)|w|^4}.
\end{equation*}
With $\widetilde{J}'''_N(\pi)=0$, we get for $|\w|\le \frac{\pi}{4(n+1)}$,
\begin{equation*}
    |\widetilde{J}'''_N(\w)|\le \widetilde{J}^{(4)}_N(0)|\w|. 
\end{equation*}
The fourth derivative can be written as 
\begin{equation*}
    \widetilde{J}^{(4)}_N(\w)=2\Big(3(\widetilde{F}_n(\w))^2+ 4\widetilde{F}'_n(\w)\widetilde{F}'''_n(\w)+ \widetilde{F}_n(\w)\widetilde{F}^{(4)}_n(\w)\Big)
\end{equation*}
and therefore  we obtain
\begin{equation*}
    \widetilde{J}^{(4)}_N(0)=2\Big(3(\widetilde{F}_n(0))^2+  \widetilde{F}^{(4)}_n(0)\Big). 
\end{equation*}
Now, since for $\w=0$ one has 
\begin{equation*}
    \widetilde{F}^{(4)}_n(0)=\frac{2}{n+1}\sum\limits_{k=1}^n \left(k^4-\frac{k^5}{n+1}\right)= \frac{(n(n+2))}{30}(2n(n+2)-1),
\end{equation*}
we easily  get  the value 
\begin{equation*}
    \widetilde{F}^{(4)}_n(0)= \frac{1}{30}n(n+2)(9n(n+2)-2).
\end{equation*}
\end{proof}

\section{Proof of Lemma~\ref{summation_lemma} }
\begin{proof} For $x\in \St$, with $d(x,x_j)\le\varepsilon \frac{\nu}{n+1} $ for some $x_j \in \X$,  we define the ring around $x \in \St$ by 
\begin{equation*}
    S_m:=\left\{y\in \St\colon   \frac{\nu m}{n+1}\le d(x,y) \frac{\nu (m+1)}{n+1}\right\}
\end{equation*}
for $m\in \N $. 
As it have been shows in \cite{KeinerKunis2007}, we can estimate the number of elements in the intersection of $S_m$ with the set $\X \setminus \{x_j\}$ for $m\ge 1$ by 
\begin{equation*}
    \mathrm{card} (\X \setminus \{x_j\} {\cap } S_m )\le 25 m. 
\end{equation*}

Thus, it remains to estimate the number of elements in $\X \setminus \{x_j\} {\displaystyle \cap } S_0$. We are going to use the same argument as in \cite{KeinerKunis2007}, namely, we see that for $x_i,x_n \in \X \in \{x_j\} {\displaystyle \cap } S_0$  one has ${B_{\frac{\nu}{2(n+1)}}(x_i) {\displaystyle \cap }B_{\frac{3\nu}{2(n+1)}}(x_n) =\emptyset}$  and 
\begin{equation*}
  \displaystyle   \cup_{x_i \in \{x_j\}{ \cap } S_0 }   B_{\frac{\nu}{2(n+1)}}(x_i) \subseteq  B_{\frac{3\nu}{2(n+1)}}(x_j).
\end{equation*}
Since $\varepsilon\le 1/2$ and the Riemannian volume form is rotation invariant, we can bound the number of elements by 
\begin{equation*}
    \mathrm{card} (\X \setminus \{x_j\} {\displaystyle \cap } S_0 )\le 2\frac{\Omega(  B_{\frac{3\nu}{2(n+1)}}(e_3) )}{\Omega(  B_{\frac{\nu}{2(n+1)}}(e_3) )}
\end{equation*}
where $e_3=(0,01)^{\mathrm{T}}$ is the north pole on the sphere. In polar coordinates around  $e_3$, we consequently have the bound 
\begin{equation*}
    \begin{split}
        \mathrm{card} (\X \setminus \{x_j\} {\displaystyle \cap } S_0 )&\le \frac{\Omega(  B_{\frac{3\nu}{2(n+1)}}(e_3) )}{\Omega(  B_{\frac{\nu}{2(n+1)}}(e_3) )}= \frac{  \ds \int_{0}^{\frac{3\nu}{2(n+1)}}\sin(r) \dx r}{ \ds \int_{0}^{\frac{\nu}{2(n+1)}}\sin(r) \dx r}\\
        &= \frac{1-\cos{\left(\frac{3\nu}{2(n+1)}\right)}}{1-\cos{\left(\frac{\nu}{2(n+1)}\right)}}= \left(1+ 2\cos{\left( \frac{\nu}{2(n+1)}\right)}\right)^2\le 9. 
    \end{split}
\end{equation*}

Due to $d(x,x_j)\le \varepsilon \frac{\nu}{n+1}$, we have $d(x,x_i)\ge  \frac{(1- \varepsilon)\nu}{n+1}$ for $ x_i \in \X \setminus\{x_j\} {\displaystyle \cap } S_0 $. Using this and the locality assumption \eqref{locality_of_f}, we can therefore estimate 
\begin{equation*}
    \begin{split}
        \sum\limits_{x_i\in \X \setminus \{x_j\}} |f(x,x_i)|&\le 
         \sum\limits_{x_i \in \X \setminus \{x_j\} {\displaystyle \cap } S_0} \frac{c_f}{((n+1) d(x,x_i))^s}\\
         &+ \sum_{m=1}^{\infty}\sum\limits_{x_i \in  \X \setminus \{x_j\} {\displaystyle \cap } S_m} \frac{c_f}{((n+1) d(x,x_i))^s}\\
         &\le \frac{9 c_f (1-\varepsilon)^{-s}}{\nu^s}+ 25 c_f \sum_{m=1}^{\infty}\frac{m}{(m\nu)^s}\\
         & \le \frac{9 c_f (1-\varepsilon)^{-s}}{\nu^s}+ \frac{25 c_f }{\nu^s}\sum_{m=1}^{\infty}\frac{1}{(m)^{s-1}}\\
         &\le \frac{(9(1-\varepsilon)^{-s}+25)c_f\zeta(s-1)}{\nu^s},
    \end{split}
\end{equation*}
where the last inequality follows by the definition of the Zeta function. On the other hand, we can define the ring around $x_j$ again by 
\begin{equation*}
    \widetilde{S}_m=\left\{y\in \St: \frac{(1-\varepsilon)\nu m}{n+1}\le d(x_j,y)\le \frac{(1-\varepsilon)\nu (m+1)}{n+1}\right\}. 
\end{equation*}
Since $d(x,x_j)\le \varepsilon \frac{\nu}{n+1}$, we have $d(x,x_j)\le \varepsilon d(x_i,x_j)$ for $x_i\in \X \setminus \{x_j\} {\displaystyle \cap } \widetilde{S}_m$ and therefore $d(x,x_i)\ge d(x_i,x_j)-d(x,x_j)\le \frac{(1-\varepsilon)\nu m}{n+1}$. Using this and the locality assumption \eqref{locality_of_f}, for $s\ge 3$ we can estimate the sum as 
\begin{equation*}
    \begin{split}
        \sum\limits_{x_i\in \X \setminus \{x_j\}} |f(x,x_i)|&\le 
          \sum_{m=1}^{\infty}\sum\limits_{x_i \in  \X \setminus \{x_j\} {\displaystyle \cap } \widetilde{S}_m} \frac{c_f}{((n+1) d(x,x_i))^s}\\
           &\le 25 c_f   \sum_{m=1}^{\infty}\frac{m}{(1-\varepsilon)^s(m\nu)^s}\\
         &\le \frac{25 c_f}{ (1-\varepsilon)^{s}\nu^s} \sum_{m=1}^{\infty}\frac{1}{m^{s-1}}= \frac{25 c_f\zeta(s-1)}{ (1-\varepsilon)^s\nu^s}. 
    \end{split}
\end{equation*}
Afterwards, we  choose $a_{\varepsilon}= \zeta(s-1)\cdot \min\{9\cdot (1-\varepsilon)^{-s}+25, 25\cdot (1-\varepsilon)^{-s} \}$ that finishes the proof. 
\end{proof}

\section{ Properties of the Cross Product }\label{cross}
Let us consider two vectors $a,b \in \R^3$, then the cross product is given by the vector 
\begin{equation*}
    a\times b= \begin{pmatrix} a_2b_3-a_3b_2\\
    a_3b_1-a_1b_3\\
    a_1b_2-a_2b_1\end{pmatrix}. 
\end{equation*}
Then for $a,b,c,d\in \R^3$ we have the following relations between  the dot product "$\cdot$" and the cross product "$\times$":
\begin{itemize}
\item[(i)]  $a\times b= - (b\times a)$,
 \item[(ii)] $a\cdot (b\times c)= b\cdot (c\times a)= c\cdot (a\times b) $,
 \item[(iii)] $a\times (b\times c)= b (a\cdot c)-  c (a\cdot b) $,
  \item[(iv)] $(a\times b) \times(a\times c)= (a\cdot (b\times c))a$,
   \item[(v)] $(a\times b) \cdot (c\times d)= (a\cdot c) (b\times d)- (a\cdot d) (b\times c)$. 
\end{itemize}



\begin{thebibliography}{99}
\footnotesize


 \bibitem{Prony} de Prony B.\,G.\,R. (1795). \emph{ Essai exp\'erimental et analytique sur les lois de la dilatabilit\'e des fluides \'elastiques et sur celles de la force expansive 
	de la vapeur de l'eau et
	de la vapeur de l'alcool  \'a diff\'erentes temp\'eratures.} J.~\'Ec. Polytech, {1} (22):  24--76.

\bibitem{Adams} Adams, J. (1878). \emph{On the expression of the product of any two Legendre’s coefficients
by means of a series of Legendre’s coefficients.} Proc. R. Soc. Lond., 27:63--71.

\bibitem{Bernstein1912} Bernstein, S. (1912). \emph{On the best approximation of continuous functions by polynomials
of a given degree.} Comm. Soc. Math. Kharkow, Ser, 2(13):49–194.

 \bibitem{Milnor}  Milnor, J. (1959).  \emph{Differentiable manifolds which are homotopy spheres.} Collected papers of John Milnor, III. Differential topology, 65-88.
 
   \bibitem{Cohen1977}  Cohen-Tannoudji C.,  Diu B., Laloe F., \emph{Quantum Mechanics}, 2nd ed. New York: Wiley, 1977, vol. 1.
   
   

 
 \bibitem{Schmidt1989} Schmidt, R. (1986). \emph{Multiple emitter location and signal parameter estimation.} IEEE
transactions on antennas and propagation, 34(3):276-280.  


 \bibitem{Roy1989}  Roy, R.,  Kailath, T. (1989). \emph{ESPRIT-estimation of signal parameters via rotational invariance techniques.} IEEE Transactions on acoustics, speech, and signal processing, 37(7), 984-995.


\bibitem{Rockmore1996} Healey, D. M., Rockmore, D. N.,  Moore, S. B. (1996, May). \emph{ An FFT for the 2-sphere and applications.} In 1996 IEEE International Conference on Acoustics, Speech, and Signal Processing Conference Proceedings (Vol. 3, pp. 1323-1326). IEEE.
   
  \bibitem{Arridge1999} Arridge, S. R. (1999). Optical tomography in medical imaging. Inverse problems, 15(2), R41.
 
   \bibitem{Meyer2001} Meyer, J. (2001). \emph{Beamforming for a circular microphone array mounted on spherically shaped objects.} The Journal of the Acoustical Society of America, 109(1), 185-193.
   
  \bibitem{Taguchi2001}   Taguchi, K., Zeng, G. L.,  Gullberg, G. T. (2001). \emph{Cone-beam image reconstruction using spherical harmonics.} Physics in Medicine \& Biology, 46(6), N127.
  
  \bibitem{Comaniciu2002}  Comaniciu, D.,  Meer, P. (2002). \emph{Mean shift: A robust approach toward feature space analysis.} IEEE Transactions on pattern analysis and machine intelligence, 24(5), 603-619.
   
   
   \bibitem{Meyer2003}   Meyer, J.,  Agnello, T. (2003, October). \emph{Spherical microphone array for spatial sound recording.} In Audio Engineering Society Convention 115. Audio Engineering Society.

 \bibitem{Ramamoorthi2004} Ramamoorthi, R., Hanrahan, P. (2004).\emph{ A signal-processing framework for reflection.} ACM Transactions on Graphics (TOG), 23(4), 1004-1042.

  \bibitem{Dumitrescu2007} Dumitrescu, B. (2007). \emph{ Positive Trigonometric Polynomials and Signal Processing  Applications.}  Springer.
  
 \bibitem{KeinerKunis2007} Keiner, J., Kunis, S., and Potts, D. (2007). \emph{Efficient reconstruction of functions on
the sphere from scattered data.} Journal of Fourier Analysis and Applications, 13(4):435-458.

\bibitem{Frank2008} Filbir, F., Themistoclakis, W. (2008). \emph{Polynomial approximation on the sphere using scattered data.} Mathematische Nachrichten, 281(5), 650-668.

\bibitem{Sloan2008} Sloan, P. P. (2008, February). \emph{Stupid spherical harmonics (sh) tricks.} In Game developers conference (Vol. 9, p. 42).


\bibitem{Jarrett2010} Jarrett, D. P., Habets, E. A.,  Naylor, P. A. (2010, August). \emph{3D source localization in the spherical harmonic domain using a pseudointensity vector.} In 2010 18th European Signal Processing Conference (pp. 442-446). IEEE.

 \bibitem{Audet2011} Audet, P. (2011). \emph{Directional wavelet analysis on the sphere: Application to gravity and topography of the terrestrial planets.} Journal of Geophysical Research: Planets, 116(E1).


 \bibitem{JurgenFrank2011} Filbir, F., Mhaskar, H. N.,  Prestin, J. (2012). \emph{On the problem of parameter estimation in exponential sums.} Constructive Approximation, 35(3), 323-343.
 
 \bibitem{Jarosik2011} Jarosik, N., Bennett, C. L., Dunkley, J., Gold, B., Greason, M. R., Halpern, M., et al. Larson, D. (2011). Seven-year wilkinson microwave anisotropy probe (WMAP*) observations: sky maps, systematic errors, and basic results. The Astrophysical Journal Supplement Series, 192(2), 14.

 
 
  \bibitem{Rauhut2011} Rauhut, H., Ward, R. (2011). Sparse recovery for spherical harmonic expansions. arXiv preprint arXiv:1102.4097.
 
\bibitem{AtkHan} Atkinson, K. and Han, W. (2012). \emph{ Spherical Harmonics and Approximations on the Unit Sphere: An Introduction. Number 2044 in Lecture Notes in Mathematics.} Springer.

 \bibitem{DeCastro2012} De Castro, Y.,  Gamboa, F. (2012). \emph{Exact reconstruction using Beurling minimal extrapolation.} Journal of Mathematical Analysis and applications, 395(1), 336-354.
 
 
  \bibitem{Johansen2013}  Johansen-Berg, H.,  Behrens, T. E. (Eds.). (2013). \emph{ Diffusion MRI: from quantitative measurement to in vivo neuroanatomy.} Academic Press.

\bibitem{Candes2013}  Cand\'{e}z E.J., Fernandez-Granda C. (2013) \emph{Super-Resolution from Noisy Data.}
J. Fourier Anal. Appl., 19:1229-1254

 \bibitem{McEwen2013} McEwen, J. D., Puy, G., Thiran, J. P., Vandergheynst, P., Van De Ville, D.,  Wiaux, Y. (2013). \emph{Sparse image reconstruction on the sphere: implications of a new sampling theorem.} IEEE Transactions on image processing, 22(6), 2275-2285.
 
\bibitem{Tang2013} Tang, G., Bhaskar, B., and Recht, B. (2013). \emph{Sparse recovery over continuous dictionaries-just discretize.} In Asilomar Conference on Signals, Systems and Computers, 2013.

 \bibitem{Bredies2013} Bredies, K. and Pikkarainen, H. (2013).\emph{ Inverse problems in spaces of
measures}. ESAIM: Control, Optimisation and Calculus of Variations, 19(1):190–218.

\bibitem{Potts2013} Potts, D.,  Tasche, M. (2013). Parameter estimation for multivariate exponential sums. Electron. Trans. Numer. Anal, 40(204-224), 94.

\bibitem{Candes2014}   Cand\'{e}z E.J., Fernandez-Granda C. (2014).  \emph{Towards a Mathematical Theory of
Super-resolution.} Comm. on Pure and Appl. Math., 67(6): 906–956, 

\bibitem{Bendory2015a} Bendory T., Dekel S., and Feuer A. ( 2015). \emph{ Exact Recovery of Dirac Ensembles from Projection onto Spaces of Spherical Harmonics.} Constr. Approx., 42(2):183 --207.

\bibitem{Bendory2015b}  Bendory T.,  Dekel S., and  Feuer. A. (2015). \emph{Super-Resolution on the Sphere Using
Convex Optimization.} IEEE Transactions on Signal Processing, 63(9): 2253--2262. 

\bibitem{Peyre2015a} Duval, V.,  Peyr\'{e}, G. (2015). \emph{ Exact support recovery for sparse spikes deconvolution.} Found. Comp. Math., 15:1315-1355.

\bibitem{Peyre2015b}  Duval, V. and Peyré, G. (2015). \emph{ Sparse spikes deconvolution on thin grids.} arXiv preprint arXiv:1503.08577.

\bibitem{Kunis2016} Kunis, S., Peter, T., R\"omer, T.,  von der Ohe, U. (2016). \emph{A multivariate generalization of Prony's method.} Linear Algebra and its Applications, 490, 31-47.


\bibitem{DeCastro2016} De Castro, Y., Gamboa, F., Henrion, D.,  Lasserre, J. B. (2016). \emph{Exact solutions to super resolution on semi-algebraic domains in higher dimensions.} IEEE Transactions on Information Theory, 63(1), 621-630.


\bibitem{Fernandez2016} Fernandez-Granda, C. (2016).\emph{Super-resolution of point sources via convex programming.} Information and Inference: A Journal of the IMA, 5(3):251-303.

\bibitem{FrankKrisrof2016} Filbir, F.,  Schr\"oder, K. (2016). \emph{Exact recovery of discrete measures from wigner d-moments.} arXiv preprint arXiv:1606.05306.


\bibitem{Cuyt2018} Cuyt, A., Tsai, M. N., Verhoye, M., Lee, W. S. (2018).\emph{ Faint and clustered components in exponential analysis.} Applied Mathematics and Computation, 327, 93-103.


\bibitem{Boelcskei2018} Aubel, C., Stotz, D.,  B\"olcskei, H. (2018). \emph{A theory of super-resolution from short-time Fourier transform measurements.} Journal of Fourier Analysis and Applications, 24(1), 45-107.

\bibitem{Lasserre2019} Josz, C., Lasserre, J. B.,  Mourrain, B. (2019). \emph{Sparse polynomial interpolation: sparse recovery, super-resolution, or Prony?.} Advances in Computational Mathematics, 45(3), 1401-1437.

\bibitem{Peyre2019} Poon, C.,  Peyr\'{e}, G. (2019). \emph{Multidimensional sparse super-resolution.} SIAM Journal on Mathematical Analysis, 51(1), 1-44.


\bibitem{Kunis2019} Kunis, S., M\"oller, H. M., von der Ohe, U. (2019).  \emph{Prony’s method on the sphere.} The SMAI journal of computational mathematics, 5, 87-97.

\bibitem{Prestin2020} Prestin, J.,  Veselovska, H. (2020).\emph{ Prony-type polynomials and their common zeros.} Frontiers in Applied Mathematics and Statistics, 6, 16.


\bibitem{Cuyt2020} Cuyt, A., Hou, Y., Knaepkens, F.,  Lee, W. S. (2020). \emph{Sparse multidimensional exponential analysis with an application to radar imaging.} SIAM Journal on Scientific Computing, 42(3), B675-B695.



\bibitem{Kunis2020} Kunis, S., R\"omer, T.,  von der Ohe, U. (2020).\emph{ Learning algebraic decompositions using Prony structures.} Advances in Applied Mathematics, 118, 102044

\end{thebibliography}
\end{document}